\newcommand{\E}{\mathbb{E}}
\newcommand{\N}{\mathbb{N}}
\renewcommand{\P}{\mathbb{P}}
\newcommand{\R}{\mathbb{R}}
\newcommand{\Et}{\tilde{\E}}
\newcommand{\Pt}{\tilde{\P}}
\newcommand{\Ac}{\mathcal{A}}
\newcommand{\Fc}{\mathcal{F}}
\newcommand{\Gc}{\mathcal{G}}
\newcommand{\Hc}{\mathcal{H}}
\newcommand{\Ic}{\mathcal{I}}
\newcommand{\Lc}{\mathcal{L}}
\newcommand{\Oc}{\mathcal{O}}
\newcommand{\Tc}{\mathcal{T}}
\newcommand{\Fct}{\tilde{\Fc}}
\def \bt{\tilde{b}}
\def \sigmat{\tilde{\sigma}}
\def \pt{\tilde{p}}
\def \gammat{\tilde{\gamma}}
\def \Bt{\tilde{B}}
\def \Ht{\tilde{H}}
\def \Vt{\tilde{V}}
\def \ut{\tilde{u}}
\def \gammab{\bar{\gamma}}
\def \Om{\Omega}
\def \Omt{\tilde{\Omega}}
\def \om{\omega}
\def \omt{\tilde{\om}}
\def \Cd{\mathrm{C}}
\def \Ed{\mathrm{E}}
\newcommand{\x}{\times}
\newcommand{\ox}{\otimes}
\newcommand{\as}{\mathrm{a.s.}}
\newcommand{\eg}{\textit{e.g.}}
\newcommand{\ie}{\textit{i.e.}}
\newcommand{\X}[1]{X^{#1}}
\newcommand{\tX}[1]{\tilde{X}^{#1}}
\newcommand{\n}[1]{N^{#1}}
\newcommand{\Z}[1]{Z^{#1}}
\newcommand{\tZ}[1]{\tilde{Z}^{#1}}
\newcommand{\e}[1]{\mathrm{e}^{#1}}
\newcommand{\smallO}[1]{\ensuremath{\mathop{}\mathopen{}o\mathopen{}\left(#1\right)}}
\newcommand{\fb}{f}
\newtheorem{theorem}{Theorem}[section]
\newtheorem{lemma}[theorem]{Lemma}
\newtheorem{proposition}[theorem]{Proposition}
\newtheorem{corollary}[theorem]{Corollary}
\newtheorem{assumption}{Assumption}[section]
\newtheorem{remark}{Remark}[section]
\newcommand{\rmi}{{\rm (i)$\>\>$}}
\newcommand{\rmii}{{\rm (ii)$\>\>$}}
\newcommand{\rmiii}{{\rm (iii)$\>\>$}}
\newcommand{\rmiv}{{\rm (iv)$\>\>$}}
\newcommand{\rmv}{{\rm (v)$\>\>$}}
\title{Optimal control of branching diffusion processes: a finite horizon problem}
\author{Julien CLAISSE
	\thanks{\'Ecole Polytechnique, CMAP, Palaiseau, F-91128, France; claisse@cmap.polytechnique.fr}
}
\date{\today}
\begin{document}
 \maketitle
 
 \abstract{
 In this paper, we aim to develop the theory of optimal stochastic control for branching diffusion processes where both the movement and the reproduction of the particles depend on the control. 
 More precisely,  
 we study the problem of minimizing a criterion that is expressed as the expected value of the product of individual costs penalizing the final position of each particle.
 In this setting, we show that the value function is the unique viscosity solution of a nonlinear parabolic PDE, that is, the Hamilton-Jacobi-Bellman equation corresponding to the problem. 
 To this end, we extend the dynamic programming approach initiated by Nisio~\cite{nisio85} to deal with the lack of independence between the particles 
 as well as between the reproduction and the movement of each particle.
In particular, 
 we exploit the particular form of the optimization criterion to recover a weak form of the branching property. In addition, we provide a precise formulation and a detailed justification of the adequate dynamic programming principle. 
 }
 
 \vspace{1mm}

 {\bf Keywords.} Stochastic control, branching diffusion process, dynamic programming principle, Hamilton-Jacobi-Bellman equation, viscosity solution.

 \vspace{2mm}

 \textbf{MSC 2010.} Primary 93E20, 60J60, 60J80; secondary 49L20, 49L25, 60J70, 60J85. 
 
 \section{Introduction}
  
  Since its onset in the late 1950s, 
  the theory of optimal stochastic control and its applications have developed extensively. One of the most famous examples is perhaps the Merton portfolio problem~\cite{merton69} in where an optimal investment strategy is identified. 
  In this case, and in many applications in finance, the  stochastic process submitted to a control is a diffusion.  
  Alongside its applications, the control of diffusion processes has been a very fruitful area of research for the past decades (see, \eg,~\cite{krylov80,fleming06,pham09}). In particular, it led to the development of the theory of viscosity solutions for second-order partial differential equations (PDEs)~\cite{lions83}. Besides, an advanced theory has been elaborated to deal with optimal control of other classes of processes such as Markov chains in discrete and continuous time, piecewise deterministic Markov processes or L\'evy processes (see, \eg,~\cite{puterman94,guo09,davis93,oksendal05}). Their applications lie in a wide variety of domains including finance, operational research, computer science and epidemiology. In this paper, we aim at developing the theory of optimal control to the class of branching diffusion processes.

  The branching diffusion processes describe the evolution of a population of identical and independent particles in which each particle has a feature, \eg\ its spatial position, whose dynamic is given by a diffusion. They were first introduced by Skorohod~\cite{skorohod64} and Ikeda, Nagasawa and Watanabe~\cite{ikeda69a,ikeda69b,ikeda69c}. In particular, these authors provided new stochastic representations for semilinear second order PDEs as functional of branching diffusion processes. Since these pioneering works, the study of branching diffusion processes has developed extensively. Nowadays they are commonly seen as simple examples of measure-valued processes and were used in particular to prove existence and to approximate the so-called Dawson--Watanabe superprocesses (see, \eg, \cite{roelly86,dawson91}). They also proved valuable for numerical applications. For instance,  Henry-Labord\`ere, Tan and Touzi~\cite{henry14} developed recently an algorithm based on a Monte-Carlo method with branching diffusion processes to solve numerically semilinear PDEs and to simulate solutions of backward stochastic differential equations.

  As mentioned before, the theory of controlled diffusion processes has generated important literature. Similarly many authors have studied optimal control problems on branching processes (see, \eg, \cite{ceci99,claisse-champagnat-16}). However, regarding the control of branching diffusion processes, only two articles have been published so far regarding the control of branching diffusion processes to the best of our knowledge. On the one hand, \"Ust\"unel~\cite{ustunel81} proposed a new construction of branching diffusion processes based on martingale problems. It allows to introduce interdependence between the particles. As an application, he studied a finite horizon problem where the controls are Markovian and act solely on the drift coefficient. He proved existence of an optimal control under rather weak conditions by using a method based on the Girsanov theorem developed in~\cite{bismut76}. On the other hand, Nisio~\cite{nisio85} considered a finite horizon problem where the control acts on the drift and diffusion components of the movement and the cost function is expressed as the product of individual cost penalizing the final position of each particle.  She identified the Hamilton-Jacobi-Bellman (HJB) equation associated to this problem in the form of a nonlinear parabolic PDE and characterized the value function as its unique (viscosity) solution. 
  
  Here we intend to generalize Nisio's work, especially to allow the lifespan and the progeny distribution to depend on the position of the particle and on the control. In addition, we do not restrict to control processes that preserves the independence between the particles. Our generalization gives rise to various and profound difficulties. In particular, we have to introduce a new construction of controlled branching diffusion processes to deal with position-dependent and control-dependent reproduction. Besides,
  new arguments are needed to handle the lack of independence between the particles as well as between the reproduction and the movement of each particle. 
  
   Our method is based on the dynamic programming approach, 
  which  originates from the celebrated Bellman principle of optimality~\cite{bellman57}.
  The key step in this approach is to derive a dynamic programming principle (DPP). 
  Although it is intuitive and simple in its formulation, 
  it is very hard to give a rigourous proof in the stochastic control framework.
  Among hundreds of references, see, \eg, Krylov~\cite{krylov80}, 
  El Karoui \cite{elkaroui87}, Borkar \cite{borkar89}, Fleming and Soner~\cite{fleming06}, 
  and the more recent monographs of Pham \cite{pham09} and Touzi \cite{touzi13}  for various formulations and approaches to the DPP
  in the context of controlled diffusion processes.
  Our proof is based on the approach of Fleming and Soner~\cite{fleming06}. It relies on an existence result due to Krylov~\cite{krylov87} for smooth solutions of fully nonlinear PDE and an approximation procedure allowing to approach the value function by a sequence of smooth value functions corresponding to perturbations of the initial problem.
  In the present study, we extend the results therein to deal with controlled branching diffusion processes.
  
  The branching diffusion processes are relevant for various applications in natural science and medicine. For instance, Sawyer~\cite{sawyer76} developed applications to population genetics in order to describe the dispersion, mutation and geographical selection of the descendants of a new gene in a population of rare mutant genes.  
  More recently, Bansaye and Tran~\cite{bansaye11} created a model based on such processes to investigate the development of a paraside inside a population of dividing cells. 
  In view of the above, the control of branching diffusions processes is interesting not only from a theoritical point of view but also for its applications. For instance, it could help to improve therapeutic strategies to eliminate a virus, or at least reduce its burden, while preserving the pool of healthy cells. Other applications such as genetic selection, management of species in danger or pest control might be promising as well. 
  
  This paper is organized as follows. In the next section, we introduce the controlled branching diffusion processes and formulate our optimal control problem. We also state the main result of this work, \ie, the characterization of the value function as the unique viscosity solution of the corresponding HJB equation. The rest of the paper is devoted to the proof of this result. In Section~\ref{sec:stocal}, we collect some properties of controlled branching diffusion processes. In particular, we establish a semi-martingale decomposition of some functional of these processes. We also study the dependence of these processes w.r.t. the parameters characterizing their dynamic. Then we prove in Section~\ref{sec:hjb-smooth} that, under stringent assumptions, the value function satisfies the DPP and the HJB equation in the classical sense. In Section~\ref{sec:hjb}, we complete the proof of the main result by using an approximation procedure. We also provide a strong comparison principle for the HJB equation, which yields the uniqueness property. Finally, we establish the DPP satisfied by the value function in Section~\ref{sec:dpp}.
 
 \section{Formulation of the problem}
 
 \subsection{Controlled branching diffusions}\label{sec:branchdiff}
  
  Let $A$ be the control space that is assumed to be Polish. Denote by $\R^{d\x m}$ the set of matrices of order $d\x m$.
  
  Consider a population of identical particles such that each of them moves according to a controlled diffusion characterized by a drift $b:\R^d\x A\to\R^d$ and a diffusion coefficient $\sigma:\R^d\x A\to \R^{d\x m}$. Moreover, each particle dies at rate $\gamma:\R^d\x A\to\R_+$ and gives birth to $k\in\N$ identical particle(s) at the time and position of its death with probability $p_k:\R^d\x A\to [0,1]$. By definition, we have 
  \begin{equation*}
    \sum_{k=0}^{+\infty} {p_k\left(x,a\right)} = 1,\quad \forall\, (x,a)\in\R^d\x A.
  \end{equation*}
  
  To describe the genealogy of the population, we give a label to each particle using the Ulam-Harris-Neveu notation (see, \textit{e.g.}, \cite{bansaye11}). We introduce the set of labels:
  \begin{equation*}
    \Ic := \left\{\emptyset\right\}\cup\bigcup_{n=1}^{+\infty} \N^n.
  \end{equation*} 
  For all $i = i_1 i_2 \ldots i_n$ and $j = j_1 j_2 \ldots j_m$ in $\Ic$, we define their concatenation $ij$ by $i_1 i_2 \ldots i_n j_1 j_2 \ldots j_m$. The mother of all the particle is labeled by $\emptyset$ and when the particle $i$ gives birth to $k$ children, they bear the labels $i0, i1, \ldots, i(k-1)$.
  We also define a partial order relation on $\Ic$: we denote $j\preceq i$ (resp. $j\prec i$) if and only if there exists $j'\in\Ic$ (resp. $j'\in\Ic\setminus\{\emptyset\}$) such that $i=jj'$. 
  
  Let $(\Om,(\Fc_s)_{s\geq 0},\P)$ be a filtered probability space satisfying the usual conditions embedded with $(B^i, Q^i)_{i\in\Ic}$ a family of independent random variables such that $B^i$ is a $m$--dimensional Brownian motion and $Q^i(dt,dz)$ is a Poisson random measure on $\R_+\x\R_+$ with intensity measure $dt\,dz$, adapted to the filtration $(\Fc_s)_{s\geq 0}$.
  
  We say that $\alpha=(\alpha^i)_{i\in\Ic}$ is a control if and only if each $\alpha^i$ is a predictable process valued in $A$. Morally, each $\alpha^i$ is dedicated to control the movement and the reproduction of the particle of label $i$. Denote by $\Ac$ the collection of all controls. 

  Following the inspiration of~\cite{champagnat07, bansaye11}, we represent the population $\Z{}$ controlled by $\alpha\in\Ac$ as a measure-valued process:
  \begin{equation*}
    \Z{}_s = \sum_{i\in V_s} {\delta_{\left(i,\X{i}_s\right)}},
  \end{equation*}
  where $V_s$ contains the labels of all the particles alive and $\X{i}_s$ denotes the position of the particle $i$ at time $s$. Provided that the particle $i$ is alive, its dynamic can be roughly described as follows:
  \begin{itemize}
   \item its position $\X{i}$ is given by
    \begin{equation}\label{eq:edsi}
      d\X{i}_s = b\left(\X{i}_s,\alpha^i_s\right) ds + \sigma\left(\X{i}_s,\alpha^i_s\right) dB^i_s;
    \end{equation}
    \item its reproduction is governed by $Q^i$ and the probability that it gives birth to $k$ particle(s) in $[s,s+h]$ given $\Fc_s$ is equal to
    \begin{equation*}
      \gamma\left(\X{i}_s,\alpha^i_s\right) p_k\left(\X{i}_s,\alpha^i_s\right) h + \smallO{h}.
    \end{equation*}
  \end{itemize}

  Let $\Ed$ be the state space of the process given by
  \begin{equation*}
    \Ed:=\left\{\sum_{i\in V} {\delta_{(i,x^i)}};\  V\subset\Ic\text{ finite, } x^i\in\R^d\text{ and } i\nprec j\ \text{for all }i, j\in V\right\}.
  \end{equation*}
  We embed $\Ed$ with the weak topology. It is then a Polish space as a closed set of the space of finite measures on $\Ic\x\R^d$ (see, \eg, \cite[Sec.3.1.1]{dawson91}).
  Denote also, for all $\mu=\sum_{i\in V} {\delta_{(i,x^i)}}\in\Ed$ and $\fb=(f^i)_{i\in\Ic}$ such that $f^i:\R^d\to\R$,
  \begin{equation*}
    \langle \mu, \fb\rangle:=\sum_{i\in V} {f^i\left(x^i\right)}.
  \end{equation*}
  
  Let $\Lc^a$ be the infinitesimal generator of the (Markovian) diffusion given by the SDE~\eqref{eq:edsi} with $\alpha^i\equiv a\in A$, \ie, for all $f\in\Cd^2(\R^d)$,
  \begin{equation*}
   \Lc^a {f}(x) = \frac{1}{2}\mathrm{tr}\left(\sigma\sigma^*\left(x,a\right) D^2_x{f}\left(x\right)\right)  + b\left(x,a\right) \cdot D_x{f}\left(x\right),
  \end{equation*}
  where $D^2_x{f}$ and $D_x{f}$ denote respectively the Hessian matrix and the gradient of $f$.
  
  To characterize the dynamic of a population $\Z{}$ controlled by $\alpha=(\alpha^i)_{i\in\Ic}\in\Ac$ starting at time $t\geq0$ from initial state $\mu\in\Ed$, we consider the following SDE: for all $\fb=(f^i)_{i\in\Ic}\in\Cd^{1,2}(\R_+\x\R^d)^{\Ic}$,
 \begin{multline}\label{eq:eds}
  \langle \Z{}_s, \fb\left(s,\cdot\right)\rangle  = \langle \mu, \fb\left(t,\cdot\right)\rangle  + \int_t^s {\sum_{i\in V^{}_{\theta}} {D_x {f^i}\left(\theta,\X{i}_{\theta}\right) \sigma\left(\X{i}_{\theta},\alpha^i_{\theta}\right)   d B^i_{\theta}}} \\
  \begin{aligned}
   & + \int_t^s {\sum_{i\in V^{}_{\theta}} {\left(\partial_t{f^i}\left(\theta,\X{i}_\theta\right) + \Lc^{\alpha^i_{\theta}} {f^i}\left(\theta,\X{i}_\theta\right) \right)}} \,d {\theta} \\  
   & + \int_{(t,s]\x\R_+}  { \sum_{i\in V^{}_{\theta-}} { \sum_{k\geq 0} { \left( \sum_{l=0}^{k-1} {  f^{il}\left(\theta,\X{i}_{\theta}\right)  } - f^{i}\left(\theta,\X{i}_{\theta}\right) \right) \mathds{1}_{I_k\left(\X{i}_{\theta},\alpha^i_{\theta}\right)}\left(z\right) } \,Q^i\left(d \theta, d z\right) } },
  \end{aligned}
  \\
  \forall\, s\geq t,\ \P-\as,
 \end{multline}
  where, for all $(x,a)\in\R^d\x A$,
  \begin{gather*}
    I_{k}\left(x,a\right):=\left[\gamma\left(x,a\right) \sum_{l=0}^{k-1} {p_{l}\left(x,a\right)},\gamma\left(x,a\right) \sum_{l=0}^{k} {p_{l}\left(x,a\right)}\right),
  \end{gather*}
  with the value of an empty sum being zero by convention. Notice that $(I_{k}(x,a))_{k\in\N}$ forms a partition of the interval $[0,\gamma(x,a))$.
  
  In the SDE~\eqref{eq:eds}, the first two integrals describe the movement of the particles. In particular, one can recognize It\^o's formula applied to $f^i(\X{i}_s,\alpha^i_s)$ for each $i\in V_s$. The last integral w.r.t. the Poisson random measures characterizes the jumps of the process due to death and reproduction of the particles. Note also that the stochastic integral w.r.t. the Brownian motions is defined as follows:
  \begin{equation*}
   \sum_{i\in\Ic} \int_t^s {\mathds{1}_{i\in V^{}_{\theta}} D_x {f^i}\left(\theta,\X{i}_{\theta}\right) \sigma\left(\X{i}_{\theta},\alpha^i_{\theta}\right)   d B^i_{\theta}},
  \end{equation*}
  where the latter is well-defined under the appropriate assumptions given below.

  Let us give a first set of assumptions to ensure that the population process is well-defined as stated in the proposition below.
  \begin{assumption}\label{hyp:pop}   
	\rmi $b$ and $\sigma$ are measurable, bounded and there exists $K>0$ such that 
      \begin{equation*}
	\left|b\left(x,a\right)-b\left(y,a\right)\right| + \left|\sigma\left(x,a\right)-\sigma\left(y,a\right)\right| \leq K \left|x-y\right|, \quad \forall\, (x,y,a)\in(\R^d)^2\x A;
      \end{equation*}
    \rmii $\gamma$ is measurable and there exists $\gammab>0$ such that
    \begin{equation*}
    	\gamma\left(x,a\right) \leq \gammab, \quad\forall\, (x,a)\in\R^d\x A;
	\end{equation*}
    \rmiii the $p_k$'s are measurable and there exists $M>0$ such that
      \begin{equation*}
	\sum_{k=0}^{+\infty} {k p_{k} \left(x,a\right)} \leq M, \quad\forall\, (x,a)\in\R^d\x A.
      \end{equation*}
  \end{assumption}

  \begin{proposition}\label{prop:def}
    Let $t\in\R_+$, $\mu = \sum_{i\in V}{\delta_{(i,x^i)}}\in\Ed$ and $\alpha\in\Ac$. Under Assumption~\ref{hyp:pop}, there exists a unique (up to indistinguishability) c\`adl\`ag and adapted process $(\Z{t,\mu,\alpha}_s)_{s\geq t}$ valued in $\Ed$ satisfying the SDE~\eqref{eq:eds}. In addition, we have
    \begin{equation}\label{eq:moment}
      \E\left[\sup_{t\leq\theta\leq s}{\left\{\n{t,\mu,\alpha}_{\theta}\right\}}\right]\leq \left|V\right| \e{\gammab M (s-t)},\quad \forall s\geq t.
    \end{equation}
    where $\n{t,\mu,\alpha}_\theta$ denotes the numbers of particles alive at time $\theta$ and $\left|V\right|$ the cardinal of the set $V$.
  \end{proposition}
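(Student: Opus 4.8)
The plan is to construct the process pathwise by recursion over its successive branching times, to rule out explosion by means of a moment estimate, and to read off uniqueness from the same recursion.

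First I would build the solution between branching events. Set $T_0:=t$ and assume the population has been constructed up to the $n$-th branching time $T_n$ as a finite family of labelled particles with prescribed positions. Starting from $T_n$, each particle $i$ then alive evolves according to~\eqref{eq:edsi} driven by its own Brownian motion $B^i$ and control $\alpha^i$; since $b$ and $\sigma$ are bounded and Lipschitz in $x$ by Assumption~\ref{hyp:pop}(i) and $\alpha^i$ is predictable, this SDE has a unique strong solution. Because $\gamma\le\gammab$ by Assumption~\ref{hyp:pop}(ii), the next branching time of particle $i$,
\[
  \tau^i := \inf\Big\{ s>T_n : Q^i\big(\{(u,z):T_n<u\le s,\ z<\gamma(\X{i}_u,\alpha^i_u)\}\big)\ge 1\Big\},
\]
is an a.s.\ strictly positive stopping time, and I set $T_{n+1}:=\min_i\tau^i$ over the alive particles. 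At $T_{n+1}$ the branching particle and the offspring number $k$ are read off from the Poisson point realising the infimum and from the interval $I_k(\X{i}_{T_{n+1}},\alpha^i_{T_{n+1}})$ containing its mark; the mother is then removed and replaced by children $i0,\dots,i(k-1)$ started at $\X{i}_{T_{n+1}}$. This produces an adapted c\`adl\`ag $\Ed$-valued process on $[t,T_\infty)$, with $T_\infty:=\lim_n T_n$, which satisfies~\eqref{eq:eds} on each $[t,s\wedge T_n]$ by construction.

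The crux is to show non-explosion, $T_\infty=+\infty$ $\P$-$\as$, which is precisely where the moment estimate enters and where a circularity must be broken, since~\eqref{eq:eds} only makes sense a priori up to $T_\infty$. I would work up to the stopping times $T_n$, at which exactly $n$ branchings have occurred, so that only finitely many particles have ever existed and, each offspring count having conditional mean at most $M$ by Assumption~\ref{hyp:pop}(iii), all quantities below are integrable. Every particle alive at a time $\theta$ is either one of the initial $|V|$ particles or was born at a branching event in $(t,\theta]$; hence $\sup_{t\le\theta\le s\wedge T_n}N_\theta \le |V|+B_{s\wedge T_n}$, where $B$ is the increasing process obtained by integrating $\sum_{i\in V_{\theta-}}\sum_{k\ge 0}k\,\mathds{1}_{I_k(\X{i}_\theta,\alpha^i_\theta)}(z)$ against the measures $Q^i(d\theta,dz)$. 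Compensating this integral using $|I_k(x,a)|=\gamma(x,a)p_k(x,a)$, together with $\sum_k kp_k\le M$ and $\gamma\le\gammab$, gives
\[
  \E\Big[\sup_{t\le\theta\le s\wedge T_n}N_\theta\Big] \le |V| + \gammab M\int_t^s \E\Big[\sup_{t\le\phi\le\theta\wedge T_n}N_\phi\Big]\,d\theta,
\]
so that Gr\"onwall's lemma yields $\E[\sup_{t\le\theta\le s\wedge T_n}N_\theta]\le |V|\,\e{\gammab M(s-t)}$, uniformly in $n$. On the event $\{T_\infty\le s\}$ the running maximum diverges as $n\to\infty$: a population bounded on $[t,T_\infty)$ would branch at total rate at most $\gammab\sup_\theta N_\theta$ and thus experience only finitely many branchings before $T_\infty$, contradicting $T_n\uparrow T_\infty$. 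Monotone convergence then forces $\P(T_\infty\le s)=0$ for every $s$, whence $T_\infty=+\infty$; letting $n\to\infty$ in the displayed bound delivers the moment estimate~\eqref{eq:moment}.

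Finally, uniqueness up to indistinguishability follows from the same recursion. Two solutions driven by the common data $(B^i,Q^i)$ coincide on $[t,T_1]$ by strong uniqueness for~\eqref{eq:edsi}, share the same first branching time and offspring configuration since these are determined pathwise by the $Q^i$ and the intervals $I_k$, and hence agree on every $[T_n,T_{n+1}]$ by induction; non-explosion extends the agreement to all of $[t,+\infty)$. The main obstacle throughout is this interplay between construction and estimation: the estimate needed for non-explosion can only be justified after localisation at $T_n$, where the finiteness of the number of branchings restores the integrability and the martingale property of the compensated birth process.
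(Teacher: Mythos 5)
Your proposal is correct and follows essentially the same strategy as the paper: an inductive construction over the jump times with the marks of the $Q^i$ thinned through the intervals $I_k$, a localized compensation--Gr\"onwall argument giving the uniform moment bound that rules out explosion, and pathwise uniqueness reduced to strong uniqueness for the single-particle SDE~\eqref{eq:edsi}. The only (immaterial) variations are that the paper runs the recursion over \emph{potential} jump times at constant rate $\gammab$ per particle (rejecting marks falling in $[\gamma,\gammab]$) and localizes at $\tau_n=\inf\{s:\ N_s\ge n\}$ rather than at the $n$-th branching time, which makes the integrability and jump-counting bookkeeping slightly more automatic.
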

 
   The proof of the proposition above is postponed to Section~\ref{sec:strongbd}. It relies essentially on two arguments which follow from Assumption~\ref{hyp:pop}. First, the point~(i) ensures that there exists a unique solution to SDE~\eqref{eq:edsi}. Second, the assertions (ii) and (iii) rule out explosion, \ie, there is almost surely finitely many jumps in finite time.
  
  We conclude this section with some notations. Unless otherwise mentioned, we denote
  \begin{equation*}
    \Z{t,\mu,\alpha}_s = \sum_{i\in V^{t,\mu,\alpha}_s} {\delta_{(i,\X{i}_s)}},\quad \forall s\geq t.
  \end{equation*}
  In the important case $\mu=\delta_{(\emptyset,x)}$ with $x\in\R^d$, we simply denote $\Z{t,x,\alpha}$ and $V^{t,x,\alpha}$ instead of $\Z{t,\mu,\alpha}$ and $V^{t,\mu,\alpha}$.

 \subsection{A finite horizon problem}
  
  The aim of this section is to formulate the finite horizon problem studied in this paper. 

  Let $T>0$ be the finite horizon, $g:\R^d\to [0,1]$ and $c:\R^d\to \R_+$ be measurable maps. For all $(t,\mu,\alpha)\in\R_+\x\Ed\x\Ac$, we denote
  \begin{equation*}
  	\varGamma^{t,\mu,\alpha}_s := \exp{\left(-\int_t^s{\sum_{i\in V^{t,\mu,\alpha}_\theta} c\left(\X{i}_\theta,\alpha^i_\theta\right) \,d\theta }\right)}.
  \end{equation*}
  As before, if $\mu=\delta_{(\emptyset,x)}$ with $x\in\R^d$, we simply denote $\varGamma^{t,x,\alpha}$ instead of $\varGamma^{t,\mu,\alpha}$. 
  Define the cost function $\bar{J}:[0,T]\x\Ed\x\Ac\to[0,1]$ by
  \begin{equation*}
    \bar{J}(t,\mu,\alpha) := \E\left[\varGamma^{t,\mu,\alpha}_T \prod_{i\in V^{t,\mu,\alpha}_T} {g\left(\X{i}_T\right)}\right],
  \end{equation*}
  with the value of an empty product being one by convention. Define also both the value functions $\bar{v}:[0,T]\x\Ed\to[0,1]$ and $v:[0,T]\x\R^d\to[0,1]$ by
  \begin{equation*}
    \bar{v}(t,\mu):=\inf_{\alpha\in\Ac} {\bar{J}(t,\mu,\alpha)} \quad \text{and} \quad v(t,x) := \bar{v}(t,\delta_{(\emptyset,x)}).
  \end{equation*}
    
  The multiplicative form of the cost function is essential for the present study (see Remark~\ref{rem:cost} below). Even though it is restrictive, some relevant control problems can be expressed in such a form. For instance, if $c\equiv 0$ and $g\equiv 0$, the goal is to minimize the probability of extinction before time $T$. It is of interest in conservation biology for instance, where the controller tries to favor the survival of an endangered species (see, \textit{e.g.}, \cite{houston-namara-01}). Similarly, if $c\equiv 0$ and $g:x\mapsto\e{-|x|}$, the goal is roughly to maximize the sum of the final states of the particles. Such problems appear naturally in harvesting management, where the controller wants to increase the yield of a farming business (see, \textit{e.g.},~\cite{lenhart-workman-07}). In addition, the map $c$ allows to take into account a running cost, \textit{e.g.}, to penalize undesirable population or control states before time $T$. 
  
  \begin{remark}\label{rem:cost}
   \rmi In the uncontrolled setting, the branching property yields for all $t\in\R_+$, $\mu=\sum_{i\in V}{\delta_{(i,x^i)}}\in\Ed$ and $a\in A$,
   \begin{equation*}
   	\bar{J}\left(t,\mu,a\right) = \prod_{i\in V} {\bar{J}\left(t,\delta_{(i,x^i)},a\right)}.
   \end{equation*}
   In addition, under suitable conditions, the map $u:(t,x)\mapsto \bar{J}\left(t,\delta_{(i,x)},a\right)$ satisfies the following PDE:
   \begin{equation*}
    \partial_t u\left(t,x\right) + \Gc^a {u}\left(t,x\right) - c^a(x) u(t,x) = 0, \quad \forall\,(t,x)\in [0,T)\x\R^d, 
   \end{equation*}
   where $c^a=c(\cdot,a)$ and $\Gc^a$ is given by~\eqref{eq:cumulant} below.
   We refer the reader to~\cite{skorohod64,ikeda69c} for more details. \\
   \rmii If $g>0$, we have at our disposal another significant expression for the cost function:
    \begin{equation*}
       \bar{J}\left(t,\mu,\alpha\right) = \E\left[\exp{\left(-\int_t^T{\langle \Z{t,\mu,\alpha}_s, c^{\alpha_s}\rangle \,ds} - \langle \Z{t,\mu,\alpha}_T, -\ln{\left(g\right)}\rangle \right)}\right],
     \end{equation*}
     where $c^{\alpha_s}=(c(\cdot,\alpha^i_s))_{i\in\Ic}$.
  \end{remark}

 \subsection{Main result}
 
 The aim of this section is to state the main result of this paper, namely, the characterization of the value function as the unique viscosity solution of a nonlinear parabolic PDE. This is the so-called Hamilton-Jacobi-Bellman (HJB) equation corresponding to the optimal control problem under consideration. 
 Another important result in this paper is the corresponding dynamic programming principle (DPP), see Section~\ref{sec:dpp}.
  
 Given $a\in A$, we denote by $\Gc^a$ the operator acting on the space of functions $f\in\Cd^2(\R^d)$ bounded by $1$ as follows:
 \begin{multline}\label{eq:cumulant}
  \Gc^a {f} \left(x\right) :=  \frac{1}{2}\mathrm{tr}\left(\sigma\sigma^*\left(x,a\right) D^2_x{f}\left(x\right)\right)  + b\left(x,a\right) \cdot D_x{f}\left(x\right) \\
  +  \gamma\left(x,a\right) \left(\sum_{k\geq0} {p_{k}\left(x,a\right) {f\left(x\right)}^k} - f\left(x\right)\right).
 \end{multline}
 In the uncontrolled case, \ie\ $\alpha\equiv a$, $\Gc^a$ characterizes the cumulant semigroup and hence the law of the branching diffusion process (see, \eg, \cite{roelly90}).

 Before giving the main result, we make a new assumption regarding the regularity of the various parameters involved in the definition of the problem.
 
 \begin{assumption}\label{hyp:main}
  The maps $(p_k(\cdot,a))_{k\in\N}$, $\gamma(\cdot,a)$, $c(\cdot,a)$ and $g$ are uniformly continuous in $\R^d$, uniformly w.r.t. $a\in A$.
 \end{assumption}

 \begin{theorem}\label{th:main}
  Under Assumptions~\ref{hyp:pop} and \ref{hyp:main}, it holds for all $t\in [0,T]$ and $\mu=\sum_{i\in V} {\delta_{(i,x^i)}}\in E$,
  \begin{equation}\label{eq:branching}
  	\bar{v}(t,\mu) = \prod_{i\in V} {v(t,x^i)}.
  \end{equation}
  In addition, the value function $v$ is the unique viscosity solution valued in $[0,1]$ of 
  \begin{equation}\label{eq:hjb}
    \partial_t u\left(t,x\right) + \inf_{a\in A} {\left\{ \Gc^a {u}\left(t,x\right) - c^a(x) u(t,x) \right\}} = 0,\quad \forall\,\left(t, x\right)\in[0,T)\x\R^d,
  \end{equation}
  satisfying the terminal condition $u(T, \cdot) = g$.
 \end{theorem}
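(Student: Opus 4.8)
\emph{Overview.} The plan is to extract the whole statement from a single verification argument performed in a smooth setting, from which the branching identity \eqref{eq:branching}, the Hamilton--Jacobi--Bellman equation \eqref{eq:hjb} and the underlying dynamic programming principle all flow at once; the general case then follows by an approximation and stability procedure in the spirit of Fleming and Soner~\cite{fleming06}, and uniqueness from a comparison principle. The point I would stress from the outset is that, thanks to the \emph{multiplicative} shape of the criterion recalled in Remark~\ref{rem:cost}, the characterisation of $v$ as a scalar object and the factorisation \eqref{eq:branching} need not be separated: both are read off the same computation.

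\emph{Smooth case.} Under reinforced hypotheses --- uniform ellipticity of $\sigma\sigma^{*}$, smooth and bounded $b,\sigma,\gamma,(p_k),c,g$, a compact control space $A$, and boundedly many offspring so that the sum in \eqref{eq:cumulant} is finite --- I would invoke Krylov's regularity theorem~\cite{krylov87} to produce a solution $w\in\Cd^{1,2}$ of \eqref{eq:hjb} with $w(T,\cdot)=g$ and $0\le w\le1$ (the range being enforced by comparison with the constants $0$ and $1$). The heart of the matter is then to apply the semimartingale decomposition established in Section~\ref{sec:stocal} to the process $s\mapsto\varGamma^{t,\mu,\alpha}_s\prod_{i\in V^{t,\mu,\alpha}_s}w(s,\X{i}_s)$ for an \emph{arbitrary} control $\alpha\in\Ac$. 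A direct computation shows that its finite-variation part is a nonnegative weight $\varGamma^{t,\mu,\alpha}_s\prod_{i\neq j}w(s,\X{i}_s)$ times the scalar quantities $\partial_t w+\Gc^{a}w-c^{a}w$ evaluated at each living particle, which are $\ge0$ precisely because $w$ solves \eqref{eq:hjb}. Since $w\in[0,1]$ keeps the process bounded and \eqref{eq:moment} controls integrability, the process is a genuine submartingale, and its endpoints are $\prod_{i\in V}w(t,x^i)$ and $\varGamma^{t,\mu,\alpha}_T\prod_{i\in V^{t,\mu,\alpha}_T}g(\X{i}_T)$; taking expectations yields $\bar J(t,\mu,\alpha)\ge\prod_{i\in V}w(t,x^i)$ for every $\alpha$. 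A measurable selector of the infimum in \eqref{eq:hjb} makes the drift vanish and produces a (nearly) optimal control giving the reverse inequality. Hence $\bar v(t,\mu)=\prod_{i\in V}w(t,x^i)$, so $v=w$ solves \eqref{eq:hjb} classically and \eqref{eq:branching} holds; the dynamic programming principle follows from the same submartingale argument run on a subinterval $[t,r]$ together with the strong Markov property. I want to emphasise that this computation never uses independence of the particles: it is the nonnegativity of $w$ and of $\varGamma$, combined with the product form of the cost, that furnishes the \emph{weak branching property} and absorbs any coupling introduced by $\alpha$.

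\emph{General case and uniqueness.} For general coefficients I would build a sequence of regularised problems meeting the hypotheses above --- mollifying $b,\sigma,\gamma,(p_k),c,g$, adding a vanishing ellipticity perturbation, and truncating both the control space and the progeny --- whose value functions $v^n$ are the classical solutions produced above. Using the stability estimates for controlled branching diffusions derived in Section~\ref{sec:stocal} together with the uniform continuity of Assumption~\ref{hyp:main}, I would show $v^n\to v$ locally uniformly; stability of viscosity solutions under locally uniform limits then makes $v$ a viscosity solution of \eqref{eq:hjb}, the identity \eqref{eq:branching} passes to the limit, and the uniform continuity of $g$ secures the terminal condition $v(T,\cdot)=g$. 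Uniqueness within $[0,1]$ reduces to a strong comparison principle: the only nonstandard feature of \eqref{eq:hjb} is the zeroth-order nonlinearity $u\mapsto\gamma(\sum_{k}p_k u^{k}-u)$, whose derivative in $u$ is bounded by $\gammab(M+1)$ on $[0,1]$ by Assumption~\ref{hyp:pop}, so it is Lipschitz and the usual doubling-of-variables argument applies.

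\emph{Main obstacle.} The genuinely delicate points are the two places where coupling between particles --- and between a particle's movement and its reproduction --- cannot be wished away: the quantitative stability of the controlled population used to pass to the limit (Section~\ref{sec:stocal}), and the precise, measurable formulation of the dynamic programming principle. The verification argument above is what renders the branching identity painless, but making the approximation and the DPP rigorous, including the measurable selection of near-optimal controls and the measurability of $\bar v(r,\cdot)$ on $\Ed$, is where the real work lies.
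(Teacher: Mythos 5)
Your overall architecture does track the paper's proof: Krylov's Theorem 6.4.4 for a classical solution in a regularized setting, the semimartingale decomposition of $s\mapsto\varGamma^{t,\mu,\alpha}_s\prod_{i\in V^{t,\mu,\alpha}_s}u(s,\X{i}_s)$ (Corollary~\ref{cor:dsm}) as the substitute for It\^o's formula, mollification plus a vanishing ellipticity term, stability of viscosity solutions, and a doubling-of-variables comparison principle whose only nonstandard ingredient is the Lipschitz bound~\eqref{eq:G} on the zeroth-order term. The submartingale half of your verification argument (every $\alpha$ gives $\bar{J}(t,\mu,\alpha)\geq\prod_{i\in V}u(t,x^i)$) is exactly part (i) of Proposition~\ref{prop:dpp-smooth}. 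The genuine gap is the reverse inequality: you obtain it from ``a measurable selector of the infimum,'' i.e.\ a feedback control $\hat{\alpha}(s,x)$, but a merely measurable selector plugged into~\eqref{eq:edsi} yields a closed-loop SDE with discontinuous coefficients for which neither strong nor weak existence is guaranteed --- the paper itself isolates precisely this as requiring \emph{additional} hypotheses (continuity in $a$ and existence of a solution to the feedback SDE, see Remark~\ref{rmk:optimal}), and continuity in $a$ is nowhere assumed. Moreover, admissible controls here are open-loop predictable processes $(\alpha^i)_{i\in\Ic}$, one per label including labels of particles not yet born, so even a good selector would still have to be converted into an element of $\Ac$. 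The paper's proof of part (ii) of Proposition~\ref{prop:dpp-smooth} replaces the selector by an explicit construction: $\varepsilon$-minimizers $a_{n,m}$ on a time--space grid, the piecewise-constant control $\alpha^i_s=a_{n,m}$ for $s\in(s_n,s_{n+1}]$, $\X{i}_{s_n}\in B_m$ (with trajectories extended backwards along ancestors), and an error estimate in which $\P(\Om\setminus F_\delta)$ is killed using the jump-time bounds on $S_k$ and a modulus-of-continuity estimate for the controlled diffusions. This construction is also what yields DPP (ii) uniformly over stopping times; your appeal to the strong Markov property is unavailable, since under open-loop controls the population process is not Markov.

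Two further gaps sit in your approximation step. First, you posit a compact control space and propose to truncate $A$; since Assumption~\ref{hyp:main} gives uniform continuity in $x$ only, with no continuity in $a$, there is no mechanism to prove convergence of value functions after changing $A$ --- and none is needed: the paper keeps $A$ an arbitrary Polish space throughout and truncates only the progeny law ($p_{n,k}=p_k(\cdot,a)*\rho_n$ for $k<n$, with $p_{n,n}$ the leftover mass). Second, the ``vanishing ellipticity perturbation'' is not innocuous: the regularized dynamics carries the extra noise $\frac{1}{\sqrt{n}}\,d\Bt^i$, so the approximating problems must be posed on an enlarged filtered space with the larger control class $\tilde{\Ac}$ of processes predictable for $(\Fc_s\ox\Fct_s)$, and one must then prove that enlarging the control class leaves the \emph{original} value unchanged. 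The paper does this at the end of Lemma~\ref{lem:last} by disintegration: for fixed $\omt$, $\alpha^{\omt}\in\Ac$ and $\bar{J}_{\infty}(t,\mu,\alpha)=\int_{\Omt}\bar{J}(t,\mu,\alpha^{\omt})\,\Pt(d\omt)\geq\bar{v}(t,\mu)$, whence $\bar{v}_{\infty}=\bar{v}$. Without this step your $v^n$ are value functions of problems not comparable to the original one, and the uniform convergence $v^n\to v$ (Lemma~\ref{lem:last}, resting on the quantitative stability of Proposition~\ref{prop:perturbation}) does not conclude. The comparison-principle portion of your sketch, modulo the localization needed on the unbounded domain (the $\varepsilon\phi$ perturbation and exponential scaling in Proposition~\ref{prop:comparison}), is essentially the paper's and is sound.
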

 
  The proof of this result is postponed to Section~\ref{sec:hjb}. Using a result due to Krylov~\cite{krylov87}, we show that, under stringent conditions on the parameters of the problem, the value function is the unique smooth solution to the HJB equation and satisfies the DPP. Then we use an approximation argument to approach the original value function by a sequence of smooth value functions corresponding to small perturbations of the parameters. Finally, we recover the desired results by passing to the limit, using in particular the stability property of viscosity solutions. In addition, the uniqueness property in Theorem~\ref{th:main} results from a strong comparison principle.
  
  We conclude this section by making some comments on Theorem~\ref{th:main}. First, the identity~\eqref{eq:branching} can be interpreted as a branching property satisfied by the value function. It suggests that the optimal control, if any, preserves the independence and the identicalness of the particles. In other words, the best output is reached when the controllers tries to optimize the dynamic of each particle regardless of the evolution of the others. This is a remarkable feature of the control problem under consideration, which relies on the multiplicative form of the cost function.
 
 In addition, Theorem~\ref{th:main} characterized the value function as the unique solution to the HJB equation. As a consequence, it provides a mean to compute the value function by using numerical methods for PDE such as the monotone scheme introduced by Barles and Souganidis~\cite{barles-souganidis-91}. Further, under additional assumptions, among which smoothness of $v$, it leads to the existence of an optimal (Markov) control, which consists in applying at any time $t$ the control $\hat{\alpha}(t,\X{i}_t)$ to the particle $i$ where
 \begin{equation*}
 	\Gc^{\hat{\alpha}(t,x)} {v}\left(t,x\right) - c^{\hat{\alpha}(t,x)}(x) v(t,x) = \inf_{a\in A} {\left\{ \Gc^a {v}\left(t,x\right) - c^a(x) v(t,x) \right\}}.
 \end{equation*}
 See Remark~\ref{rmk:optimal} below for more details. This confirms and strengthens our interpretation of the branching property above. 
 
 Finally, Theorem~\ref{th:main} provides an extension of a classical result from the theory of controlled diffusion processes, namely, the value function of the finite horizon problem with no running cost, terminal cost $g$ and exponential decay $c$ is the unique bounded viscosity solution of 
 \begin{equation*}
   \partial_t u\left(t,x\right) + \inf_{a\in A} {\left\{ \Lc^a {u}\left(t,x\right) - c^a(x) u(t,x) \right\}} = 0,\quad \forall\,\left(t, x\right)\in[0,T)\x\R^d,
 \end{equation*} 
 satisfying the terminal condition $u(T, \cdot) = g$ (see, \eg,~\cite{lions83,fleming06,pham09,touzi13}).

\begin{remark}
The ideas of this paper can be extended to deal with more general cost functions such as, given $h:\N\x\R^d\x A\to[0,1]$,
\begin{equation*}
 \E\left[\varGamma^{t,\mu,\alpha}_T \prod_{i\in V^{t,\mu,\alpha}_T} {g\left(\X{i}_T\right)} \prod_{j\in \bar{V}^{t,\mu,\alpha}_T} {h\left(K^j,\X{j}_{\tau^j},\alpha^j_{\tau^j}\right)}\right],
\end{equation*}
where  $\bar{V}^{t,\mu,\alpha}_T = \bigcup_{s\in[t,T]} V^{t,\mu,\alpha}_s \setminus V^{t,\mu,\alpha}_T$, $\tau^j$ and $K^j$ denote the death time and the number of children of the particle $j$ respectively. In the uncontrolled setting, the connection between this expected value and PDE's has been studied in~\cite{henry14}. 
\end{remark}

\section{Some properties of controlled branching diffusion processes}\label{sec:stocal}
 
\subsection{Existence and pathwise uniqueness}\label{sec:strongbd}

  Fix $t\in[0,T]$, $\mu=\sum_{i\in V} {\delta_{(i,x^i)}}\in\Ed$ and $\alpha=(\alpha^i)_{i\in\Ic}$ a control process. Throughout this section, we omit the indices $(t,\mu,\alpha)$ in the notations. For instance, we simply write $\Z{}$ instead of $\Z{t,\mu,\alpha}$.
  
  We are going to construct $\Z{}$ by induction on the sequence of potential jumping times. More precisely, we are going to define an increasing sequence of stopping time $(S_k)_{k\in\N}$, a sequence of random variables $(V_k)_{k\in\N}$ valued in the set of finite subsets of $\Ic$ and a sequence of processes $(\X{i};\,i\in V_k)_{k\in\N}$ such that  
 \begin{equation*}
  \Z{}_s := \sum_{k\geq 1} { \mathds{1}_{S_{k-1}\leq s < S_{k}}\sum_{i\in V_{k-1}} {\delta_{\left(i,\X{i}_s\right)}} }.
 \end{equation*}
 First, we set $(S_0,V_0):=(t,V)$ and $\X{i}_t := x^i$ for all $i\in V$. Then, for the incremental step, let $S_k$ be given by 
 \begin{equation*}
  S_{k}:=\inf{\big\{s>S_{k-1};\ \exists\, i\in V_{k-1},\ Q^i\left((S_{k-1},s]\x[0,\gammab]\right)=1\big\}}.
 \end{equation*}
 Further, for all $i\in V_{k-1}$, we define $\X{i}$ on $(S_{k-1},S_k]$ as the unique (up to indistinguishability) continuous and adapted process satisfying
 \begin{equation*}
  \X{i}_s = \X{i}_{S_{k-1}} + \int_{S_{k-1}}^s {b\left(\X{i}_{\theta},\alpha^i_{\theta}\right)} \,d\theta + \int_{S_{k-1}}^s {\sigma\left(\X{i}_{\theta},\alpha^i_{\theta}\right)} \,dB^i_{\theta},\quad \P-\as
 \end{equation*}
 Finally, we describe the branching event (if any) at time $S_k$. Let $J_k\in V_{k-1}$ be the (almost surely) unique label such that
 \begin{equation*}
  Q^{J_k}\left((S_{k-1},S_k]\x[0,\gammab]\right)=1.
 \end{equation*}
 Further, let $\zeta_{k}$ be the $[0,\gammab]$--valued random variable such that $(S_{k},\zeta_{k})$ belongs to the support of $Q^{J_k}$. Then we set
 \begin{equation*}
  V_{k} := 
  \begin{cases}
   V_{k-1}, & \text{if }\zeta_{k}\in \left[\gamma\left(\X{J_{k}}_{S_k},\alpha^{J_{k}}_{S_k}\right), \gammab\right], \\
   V_{k-1}\setminus\left\{J_{k}\right\}, & \text{if }\zeta_{k}\in I_{0}\left(\X{J_{k}}_{S_k},\alpha^{J_{k}}_{S_k}\right), \\
   \big(V_{k-1}\setminus\left\{J_{k}\right\}\big)\cup\left\{J_{k}0,\ldots,J_{k}(l-1)\right\}, & \text{if }\zeta_{k}\in I_{l}\left(\X{J_{k}}_{S_k},\alpha^{J_{k}}_{S_k}\right)\text{ for }l\geq 1.
  \end{cases}
 \end{equation*}
 In the last case, we also set $\X{i}_{S_k} := \X{J_{k}}_{S_k}$ for all $i\in V_{k}\setminus V_{k-1}$. This ends the construction of the population controlled by $\alpha$ initialized at time $t$ in state $\mu$.
 
 To ensure that the process is well-defined on $\R_+$, it remains to show that there is no explosion, \textit{i.e.},
 \begin{equation*}
  \P\left(\lim_{k\to\infty}{S_k}=+\infty\right)=1.
 \end{equation*}
 Since the jump rate per particle $\gamma$ is bounded, it is enough to show that (almost surely) the population remains finite in finite time. This is a straightforward consequence of the moment inequality~\eqref{eq:moment} of Proposition~\ref{prop:def}, which we prove below.

\begin{proof}[Proof of Proposition~\ref{prop:def}]
 Throughout this proof, we use the notations introduced above. Let us prove first that the process $\Z{}$ satisfies the SDE~\eqref{eq:eds} for every $f=(f^i)_{i\in\Ic}\in\Cd^{1,2}(\R_+\x\R^d)^\Ic$. Assume that it holds true up to time $S_{k-1}$. 
 One clearly has
 \begin{equation*}
  \langle Z_{s\wedge S_k},\fb\rangle =  \mathds{1}_{s\leq S_{k-1}} \langle Z_{s},\fb\rangle  + \mathds{1}_{S_{k-1}< s < S_{k}}\sum_{i\in V_{k-1}} {f^i\left(s, \X{i}_s\right)} + \mathds{1}_{s \geq S_{k}}\sum_{i\in V_{k}} {f^i\left(S_k, \X{i}_{S_k}\right)}.
 \end{equation*}
 We deal with the first term on the r.h.s. by induction. Let us turn now to the second one. It\^o's formula yields, for all $s\in (S_{k-1},S_{k})$,
 \begin{multline*}
  \sum_{i\in V_{k-1}} {f^i\left(s, \X{i}_s\right)} =  \sum_{i\in V_{k-1}} {\bigg(f^i\left(S_{k-1}, \X{i}_{S_{k-1}}\right)+\int_{S_{k-1}}^s { D_x {f^i}\left(\theta, \X{i}_{\theta}\right) \sigma\left(\X{i}_{\theta},\alpha^i_{\theta}\right) dB^i_{\theta}}} \\
  + \int_{S_{k-1}}^s {\left(\partial_t{f^i}\left(\theta,\X{i}_\theta\right) + \Lc^{\alpha^i_{\theta}} {f^i}\left(\theta,\X{i}_\theta\right) \right)} \,d{\theta} \bigg).
 \end{multline*}
 It remains to treat the third term. One has
 \begin{multline*}
  \sum_{i\in V_{k}} {f^i\left(S_k, \X{i}_{S_k}\right)} = \sum_{i\in V_{k-1}} {f^i\left(S_k, \X{i}_{S_k}\right)} -  \mathds{1}_{\zeta_k\in [0,\gamma(\X{J_k}_{S_k},\alpha^{J_k}_{S_k}))} f^{J_k}\left(S_k, \X{J_k}_{S_k}\right) \\
   + \sum_{l\geq 1}{\mathds{1}_{\zeta_k\in I_l\left(\X{J_k}_{S_k},\alpha^{J_k}_{S_k}\right)} \sum_{l'=0}^{l-1} {f^{J_k l'}\left(S_k, \X{J_k l'}_{S_k}\right) }}.
 \end{multline*}
 The first term on the r.h.s. is handled by It\^o's formula while the second and third terms coincide with the integral w.r.t. the Poisson random measures over $(S_{k-1},S_k]$ in~\eqref{eq:eds}. We deduce that the SDE~\eqref{eq:eds} is satisfied up to time $S_k$ and conclude this proof by induction.
 
 Let us turn now to the proof of the moment inequality~\eqref{eq:moment}. Let $(\tau_n)_{n\in\N}$ be an increasing sequence of stopping times given by
 \begin{equation*}
  \tau_n := \inf{\big\{s\geq t;\ N_s\geq n\big\}}.
 \end{equation*}
 The previous discussion ensures that the process $\Z{}$ is well-defined and satisfies~\eqref{eq:eds} up to time $\tau_n$. Applying this relation with  $f=\mathds{1}_{\Ic\x\R^d}$, we obtain
 \begin{equation*}
  N_{s\wedge\tau_n} = N_t + \int_{(t,s\wedge\tau_n]\x\R_+}  { \sum_{i\in V_{\theta-}} { \sum_{k\geq 0} { \left( k  - 1 \right) \mathds{1}_{I_k\left(\X{i}_{\theta},\alpha^i_{\theta}\right)}\left(z\right) } \,Q^i\left(d \theta, d z\right) } }.
 \end{equation*}
 It yields
 \begin{equation*}
  N^*_{s\wedge\tau_n} \leq N_t + \int_{(t,s\wedge\tau_n]\x\R_+}  { \sum_{i\in V_{\theta-}} { \sum_{k\geq 1} { \left( k  - 1 \right) \mathds{1}_{I_k\left(\X{i}_{\theta},\alpha^i_{\theta}\right)}\left(z\right) } \,Q^i\left(d \theta, d z\right) } }.
 \end{equation*}
 where $N^{*}_s := \sup_{t\leq\theta\leq s}{\{N_{\theta}\}}$ for all $s\geq t$. 
 It follows that
 \begin{align*}
  \E\left[N^*_{s\wedge\tau_n}\right] 
    & \leq N_t + \E\left[\int_t^{s\wedge\tau_n}  { \sum_{i\in V_{\theta}} { \gamma\left(\X{i}_{\theta},\alpha^i_{\theta}\right) \sum_{k\geq 1} { \left(k-1\right) p_{k}\left(\X{i}_{\theta},\alpha^i_{\theta}\right) }\ d\theta } }\right] \\
    & \leq N_t + \gammab M \E\left[\int_t^{s}  { N^*_{\theta\wedge\tau_n}\ d\theta } \right].
 \end{align*}
 Applying Gr\"onwall's lemma, we obtain
 \begin{equation*}
  \E\left[ N^{*}_{s\wedge\tau_n}\right]\leq N_t\e{\gammab M(s-t)}.
 \end{equation*}
 Since the r.h.s. does not depend on $n$, we deduce that $\tau_n$ converges almost surely to infinity. By Fatou's lemma, we conclude that the inequality~\eqref{eq:moment} holds.
 
 Finally, the pathwise uniqueness for the solution of~\eqref{eq:eds} follows from the pathwise uniqueness of SDE~\eqref{eq:edsi}. Indeed it suffices to apply~\eqref{eq:eds} with $f=\mathds{1}_{\{i\}\x\R^d}$ to characterize the dynamic of the particle $i$.
\end{proof}

\subsection{Semimartingale decomposition}

 The aim of this section is to derive a semimartingale decomposition of a specific class of functionals of the population process. 
 
 Fix $(t,\mu,\alpha)\in\R_+\x\Ed\x\Ac$. It follows from~\eqref{eq:eds} that, for all $f\in\Cd^{1,2}_b(\R_+\x\R^d)$,
 \begin{multline}\label{eq:dsmIto}
  \langle \Z{t,\mu,\alpha}_s,f\left(s,\cdot\right)\rangle = \langle \mu, f\left(t,\cdot\right)\rangle  + \int_t^s {\sum_{i\in V^{t,\mu,\alpha}_{\theta}} {D_x {f}\left(\theta, \X{i}_{\theta}\right) \sigma\left(\X{i}_{\theta},\alpha^i_{\theta}\right)   d B^i_{\theta}}} \\
  \begin{aligned}
    & + \int_t^s {\sum_{i\in V^{t,\mu,\alpha}_{\theta}} {\left(\partial_t{f}\left(\theta,\X{i}_\theta\right) + \Lc^{\alpha^i_{\theta}} {f}\left(\theta,\X{i}_\theta\right) \right) }} \,d {\theta} \\  
    & + \int_{(t,s]\x\R_+}  { \sum_{i\in V^{t,\mu,\alpha}_{\theta-}} { \sum_{k\geq 0} { \left(k-1\right) f\left(\theta, \X{i}_{\theta}\right) \mathds{1}_{I_k\left(\X{i}_{\theta},\alpha^i_{\theta}\right)} \left(z\right) } \,Q^i\left(d \theta, d z\right) } }.
  \end{aligned}
 \end{multline}

\begin{lemma}\label{lem:martingale}
 With the notations above, the process 
 \begin{equation*}
  M^{t,\mu,\alpha}_s := \int_t^s {\sum_{i\in V^{t,\mu,\alpha}_{\theta}} {D_x {f}\left(\theta, \X{i}_{\theta}\right) \sigma\left(\X{i}_{\theta},\alpha^i_{\theta}\right)   d B^i_{\theta}}},\quad s\geq t,
 \end{equation*}
 is a continuous square integrable martingale. In addition, its quadratic variation is given by
 \begin{equation*}
  \langle M^{t,\mu,\alpha}\rangle_s = \int_t^s {\sum_{i\in V^{t,\mu,\alpha}_{\theta}} {\left|D_x {f} \left(\theta, \X{i}_{\theta}\right) \sigma\left(\X{i}_{\theta},\alpha^i_{\theta}\right) \right|^2 d\theta}}.
 \end{equation*}
\end{lemma}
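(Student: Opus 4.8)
The plan is to build $M^{t,\mu,\alpha}$ from the individual stochastic integrals driven by each Brownian motion and to exploit their independence together with the moment bound~\eqref{eq:moment}. Recalling the definition of the stochastic integral w.r.t. the Brownian motions, I would write $M^{t,\mu,\alpha}_s = \sum_{i\in\Ic} M^i_s$ where
\begin{equation*}
 M^i_s := \int_t^s {\mathds{1}_{i\in V^{t,\mu,\alpha}_\theta}\, D_x {f}\left(\theta, \X{i}_\theta\right)\sigma\left(\X{i}_\theta,\alpha^i_\theta\right)\, dB^i_\theta}.
\end{equation*}
For each fixed $i$, the integrand is bounded and progressively measurable: $D_x f$ is bounded since $f\in\Cd^{1,2}_b(\R_+\x\R^d)$, $\sigma$ is bounded by Assumption~\ref{hyp:pop}(i), the indicator is bounded by $1$, and $\theta\mapsto\mathds{1}_{i\in V^{t,\mu,\alpha}_\theta}$ is adapted and changes value only at the (countably many) jump times, which is immaterial for the integral against the continuous $B^i$. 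Hence, by the classical theory of It\^o integration, each $M^i$ is a continuous square integrable martingale with quadratic variation
\begin{equation*}
 \langle M^i\rangle_s = \int_t^s {\mathds{1}_{i\in V^{t,\mu,\alpha}_\theta}\,\left|D_x {f}\left(\theta, \X{i}_\theta\right)\sigma\left(\X{i}_\theta,\alpha^i_\theta\right)\right|^2\, d\theta}.
\end{equation*}

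The crux of the argument is the convergence of the infinite sum $\sum_{i\in\Ic} M^i$ in the space of square integrable martingales, and this is where I expect the main difficulty to lie. Because the Brownian motions $(B^i)_{i\in\Ic}$ are independent, the martingales $(M^i)_{i\in\Ic}$ are pairwise orthogonal, \ie\ the cross-variation $\langle M^i, M^j\rangle$ vanishes for $i\neq j$; in particular the partial sums over finite subsets of $\Ic$ form an orthogonal family in $L^2$. To control the total norm, I would bound the integrand by a constant $C>0$ (available from the boundedness invoked above) and apply the moment inequality~\eqref{eq:moment}, using Tonelli's theorem:
\begin{equation*}
 \sum_{i\in\Ic} {\E\left[\left(M^i_s\right)^2\right]} = \sum_{i\in\Ic} {\E\left[\langle M^i\rangle_s\right]} \leq C\,\E\left[\int_t^s {\n{t,\mu,\alpha}_\theta\, d\theta}\right] \leq C\,|V|\int_t^s {\e{\gammab M (\theta-t)}\, d\theta} < +\infty.
\end{equation*}
By orthogonality, this uniform $L^2$-bound shows that the partial sums are Cauchy in the Hilbert space of square integrable martingales, so that $M^{t,\mu,\alpha}=\sum_{i\in\Ic} M^i$ is a well-defined square integrable martingale.

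It then remains to pass the qualitative and quantitative properties to the limit. Doob's maximal inequality upgrades the $L^2$-convergence of the partial sums to uniform convergence in $L^2$; extracting a subsequence converging uniformly almost surely, the limit $M^{t,\mu,\alpha}$ inherits the continuity of the $M^i$. Finally, the continuity of the quadratic-variation map on this space, combined with the pairwise orthogonality, yields $\langle M^{t,\mu,\alpha}\rangle_s = \sum_{i\in\Ic} \langle M^i\rangle_s$, which is precisely the announced formula once the sum is brought back inside the integral. In summary, the whole proof hinges on combining the independence of the driving Brownian motions (to obtain orthogonality, hence additivity of the $L^2$-norms and of the quadratic variations) with the non-explosion estimate~\eqref{eq:moment} controlling the expected number of particles, which is exactly what makes the series of martingales converge.
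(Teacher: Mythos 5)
Your proof is correct and follows essentially the same route as the paper: the same decomposition $M^{t,\mu,\alpha}=\sum_{i\in\Ic}M^i$, the same $L^2$ bound on $\sum_{i\in\Ic}\E[(M^i_s)^2]$ via the moment estimate~\eqref{eq:moment}, and convergence in the complete space of continuous square integrable martingales, with independence of the $(B^i)_{i\in\Ic}$ yielding the bracket formula. If anything, your explicit use of pairwise orthogonality to show the partial sums are Cauchy is slightly more careful than the paper's appeal to ``absolute convergence'', since what is actually established there is square-summability of the $L^2$ norms, which together with orthogonality is precisely what makes the series converge.
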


\begin{proof}
 For the sake of clarity, we omit the indices $(t,\mu,\alpha)$ in the notations. By definition, one has $M=\sum_{i\in\Ic} M^i$ with
 \begin{equation*}
  M^i_s := \int_t^s {\mathds{1}_{i\in V_{\theta}} {D_x {f} \left(\theta, \X{i}_{\theta}\right) \sigma\left(\X{i}_{\theta},\alpha^i_{\theta}\right)} \,dB^i_{\theta}},\quad s\geq t.
 \end{equation*}
 Since $D_x{f}$ and $\sigma$ are bounded, it is clear that $M^i$ is a continuous square integrable martingale. Besides, we recall that the space of continuous square integrable martingale is complete (see, \eg, \cite[Prop.1.5.23]{karatzas91}). Hence, if we show that $\sum_{i\in\Ic} M^i$ is absolutely convergent, it implies that $M$ is a continuous square integrable martingale. The former holds true as, for all $s\geq t$,
 \begin{align*}
  \sum_{i\in\Ic} {\E\left[\left(M^i_s\right)^2\right]} 
    & = \sum_{i\in\Ic} {\E\left[\int_t^s { {\mathds{1}_{i\in V_{\theta}}\left|D_x {f} \left(\theta, \X{i}_{\theta}\right)\sigma\left(\X{i}_{\theta},\alpha^i_{\theta}\right) \right|^2} \,d\theta}\right]}\\
    & \leq C\, \E\left[\sup_{t\leq \theta\leq s} {\left\{N_{\theta}\right\}}\right] < + \infty.
 \end{align*}
 Finally the expression for the quadratic variation follows from the independence of the Brownian motions $(B^i)_{i\in\Ic}$.
\end{proof}

For $F\in\Cd^2_b(\R)$ and $f\in\Cd^2_b(\R^d)$, we define $F_f:\Ed\to\R$ by
\begin{equation*}
 F_f\left(\mu\right) := F\left(\langle \mu, f\rangle\right).
\end{equation*}
Given $\mathsf{a}=(a^i)_{i\in\Ic}\in\Ac^{\Ic}$, we denote by $\Hc^{\mathsf{a}}$ the operator acting on the class of functions $F_f$ given by, for all $\mu=\sum_{i\in V}{\delta_{(i,x^i)}}$,
\begin{multline*}
 \Hc^{\mathsf{a}} {F_f} \left(\mu\right) := \frac{1}{2} F''\left(\langle \mu, f\rangle\right) \sum_{i\in V} {\left|D_x {f} \left(x^i\right) \sigma\left(x^i,a^i\right) \right|^2} + F'\left(\langle \mu, f\rangle\right) \sum_{i\in V} {\Lc^{a^i} {f}(x^i)} \\
 + \sum_{i\in V} { \gamma\left(x^i,a^i\right) \left(\sum_{k\geq 0} { F\Big(\langle \mu,f\rangle + \left(k-1\right) f\left(x^i\right) \Big) p_k\left(x^i,a^i\right)  - F\left(\langle \mu,f\rangle\right) } \right) }.
\end{multline*}

\begin{proposition}\label{prop:dsm}
 For $F\in\Cd^2_b(\R)$ and $f\in\Cd^{1,2}_b(\R_+\x\R^d)$, the process
  \begin{multline*}
   F_{f(s,\cdot)}\left(\Z{t,\mu,\alpha}_s\right) - F_{f(t,\cdot)}\left(\mu\right) \\
   - \int_t^s {\left(F'_{f(\theta,\cdot)}(\Z{t,\mu,\alpha}_{\theta})\langle \Z{t,\mu,\alpha}_{\theta},\partial_{t} {f(\theta,\cdot)}\rangle + \Hc^{\alpha_{\theta}} {F_{f(\theta,\cdot)}} \left(\Z{t,\mu,\alpha}_{\theta}\right)\right)} \,d\theta,\quad s\geq t,
  \end{multline*}
 is a c\`adl\`ag martingale.
\end{proposition}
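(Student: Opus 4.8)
The plan is to obtain the claimed decomposition by applying the It\^o formula for semimartingales with jumps to $F(Y_s)$, where $Y_s:=\langle\Z{t,\mu,\alpha}_s,f(s,\cdot)\rangle$, and then to match the resulting drift with the integrand appearing in the statement. Throughout I omit the indices $(t,\mu,\alpha)$.

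First I would read off from~\eqref{eq:dsmIto} that $Y$ is a semimartingale: its continuous local martingale part is the process $M$ of Lemma~\ref{lem:martingale}, with quadratic variation $\langle M\rangle_s=\int_t^s\sum_{i\in V_\theta}|D_xf(\theta,\X{i}_\theta)\sigma(\X{i}_\theta,\alpha^i_\theta)|^2\,d\theta$; its continuous finite-variation part is $\int_t^s\sum_{i\in V_\theta}(\partial_tf+\Lc^{\alpha^i_\theta}f)(\theta,\X{i}_\theta)\,d\theta$; and its jumps occur exactly at the branching times, with $Y$ increasing by $(k-1)f(\theta,\X{i}_\theta)$ when the particle $i$ is replaced by $k$ offspring at its death position. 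Since Proposition~\ref{prop:def} rules out explosion, there are almost surely finitely many jumps on any $[t,s]$, so the jump part is a genuine pure-jump finite-variation process and the It\^o formula applies without difficulty.

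Applying It\^o's formula to $F(Y_s)$ with $F\in\Cd^2_b(\R)$ produces four contributions: the continuous martingale $\int_t^s F'(Y_\theta)\,dM_\theta$; the second-order term $\frac12\int_t^s F''(Y_\theta)\,d\langle M\rangle_\theta$; the drift $\int_t^s F'(Y_\theta)\sum_{i\in V_\theta}(\partial_tf+\Lc^{\alpha^i_\theta}f)(\theta,\X{i}_\theta)\,d\theta$ coming from the continuous finite-variation part; and the jump contribution $\sum_{t<\theta\le s}[F(Y_\theta)-F(Y_{\theta-})]$, in which the first-order jump terms $F'(Y_{\theta-})\Delta Y_\theta$ cancel against the corresponding part of $\int F'(Y_{\theta-})\,dY_\theta$. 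Writing the jump sum as an integral against the Poisson measures $Q^i$ and compensating it, using that $Q^i$ has intensity $d\theta\,dz$ and $\int_{\R_+}\mathds{1}_{I_k(x,a)}(z)\,dz=\gamma(x,a)p_k(x,a)$, I split it into a compensated (hence martingale) Poisson integral and the compensator $\int_t^s\sum_{i\in V_\theta}\gamma(\X{i}_\theta,\alpha^i_\theta)\sum_{k\ge0}p_k(\X{i}_\theta,\alpha^i_\theta)\,[F(Y_\theta+(k-1)f(\theta,\X{i}_\theta))-F(Y_\theta)]\,d\theta$. Collecting the three drift contributions and comparing with the definition of $\Hc^{\alpha_\theta}$ (and using $\sum_k p_k=1$), one checks termwise that the total drift is exactly $F'_{f(\theta,\cdot)}(\Z{}_\theta)\langle\Z{}_\theta,\partial_tf(\theta,\cdot)\rangle+\Hc^{\alpha_\theta}F_{f(\theta,\cdot)}(\Z{}_\theta)$, so that the process in the statement coincides with the sum of the two martingale terms.

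It then remains to verify that these two stochastic integrals are true martingales rather than merely local ones. For the continuous part, $\E[\int_t^s(F'(Y_\theta))^2\,d\langle M\rangle_\theta]\le\|F'\|_\infty^2\,\E[\langle M\rangle_s]$, which is finite since $D_xf$ and $\sigma$ are bounded and $\E[\sup_{t\le\theta\le s}N_\theta]<\infty$ by the moment bound~\eqref{eq:moment}. For the compensated Poisson integral the key observation is that, because $F$ is bounded, the integrand $F(Y_{\theta-}+(k-1)f)-F(Y_{\theta-})$ is dominated by $2\|F\|_\infty$ uniformly in $k$; hence its predictable quadratic variation is bounded by $4\|F\|_\infty^2\gammab\,\E[\int_t^s N_\theta\,d\theta]<\infty$, again by~\eqref{eq:moment}. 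The main delicate point is precisely this jump analysis: correctly identifying and compensating the Poisson integral, and exploiting the uniform bound on $F$ so that the summation over the possibly unbounded offspring number $k$ remains harmless.
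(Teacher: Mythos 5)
Your proposal is correct and follows essentially the same route as the paper: the paper likewise applies (the generalized) It\^o formula to $F(\langle \Z{t,\mu,\alpha}_s,f(s,\cdot)\rangle)$ built on the decomposition~\eqref{eq:dsmIto}, identifies the drift with $F'_{f}\langle \Z{}, \partial_t f\rangle + \Hc^{\alpha}F_f$, and uses the moment bound~\eqref{eq:moment} to upgrade the local martingales to true ones. The only cosmetic difference is that you check the compensated Poisson integral via an $L^2$ (predictable quadratic variation) bound, whereas the paper verifies the $L^1$ condition $\E\bigl[\int |G^i_\theta(z)|\,d\theta\,dz\,\delta_i(di)\bigr]<\infty$ and cites~\cite[Sec.II.3]{ikeda89}; both are valid.
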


\begin{proof}
 Once again we omit the indices $(t,\mu,\alpha)$ in the notation. 
 We set
 \begin{equation*}
  Q\left(\om, d s, d z, d i\right) := \sum_{i\in \Ic} {Q^i\left(\om, d s,d z\right) \delta_{i}\left(d i\right)}.
 \end{equation*}
 It is clear that $Q$ is a Poisson random measure on $\R_+\x\R_+\x\Ic$ with intensity measure $ds\, dz \sum_{i\in V} {\delta_i(di)}$. 
 Applying the generalized It\^o formula (see, \eg,~\cite[Thm.2.5.1]{ikeda89}) to~\eqref{eq:dsmIto}, we obtain
 \begin{align*}
  F_{f(s,\cdot)}\left(\Z{}_s\right) = F_{f(t,.)}\left(\mu\right) 
   & + \int_t^s { F'_{f(\theta,\cdot)}\left(\Z{}_{\theta}\right) \sum_{i\in V_{\theta}} {D_x {f} \left(\theta,\X{i}_{\theta}\right) \sigma\left(\X{i}_{\theta},\alpha^i_{\theta}\right) dB^i_{\theta}} }\\
   & + \int_t^s {\left(F'_{f(\theta,\cdot)}(\Z{}_{\theta})\langle \Z{}_{\theta},\partial_{t} {f(\theta,\cdot)}\rangle + \Hc^{\alpha_{\theta}} {F_{f(\theta,\cdot)}} \left(\Z{}_{\theta}\right)\right)} \,d\theta \\
   & + \int_{(t,s]\x\R_+\x\Ic}  {G^i_\theta(z)} \ \bar{Q}\left(d \theta, d z, d i\right),
 \end{align*}
 where $\bar{Q}(d\theta, dz, di) := Q(d\theta, dz, di) - d\theta dz \sum_{i\in V} {\delta_i(di)}$ is the compensated Poisson random measure and
  \begin{equation*}
  G_\theta^i\left(z\right) :=  \mathds{1}_{i\in V_{\theta-}} \sum_{k\geq 0} { \left( F_{f(\theta,\cdot)}\left(\Z{}_{\theta-} + \sum_{l=0}^{k-1} \delta_{\left(il,\X{i}_{\theta}\right)} \right)- F_{f(\theta,\cdot)}\left(\Z{}_{\theta-}\right)\right) \mathds{1}_{I_k\left(\X{i}_{\theta},\alpha^i_{\theta}\right)}\left(z\right)}.
 \end{equation*}
 Following the arguments of Lemma~\ref{lem:martingale}, one easily checks that the second term on the r.h.s. is a continuous square integrable martingale. Besides, since 
 \begin{equation*}
  \int_{(t,s]\x\R_+\x\Ic}  {\left|G^i_\theta(z)\right|} \ d\theta dz \sum_{i\in V} {\delta_i(di)} \leq C \sup_{t\leq \theta\leq s} {\left\{N_{\theta}\right\}},
 \end{equation*}
 it follows from Proposition~\ref{prop:def} that
 \begin{equation*}
  \E\left[\int_{(t,s]\x\R_+\x\Ic}  {\left|G^i_\theta(z)\right|} \ d\theta dz \sum_{i\in V} {\delta_i(di)}\right] < +\infty.
 \end{equation*}
 Hence, the last term is a c\`adl\`ag martingale (see, \eg, ~\cite[Sec.II.3]{ikeda89}). 
\end{proof}

 The proposition above leads to the following result, which is a key ingredient in the rest of the paper. Indeed it plays the role of It\^o's formula in the study of optimal control problems on diffusion processes.

 \begin{corollary}\label{cor:dsm}
  Let $t\in\R_+$, $\mu=\sum_{i\in V} {\delta_{(i,x^i)}}\in\Ed$ and $\alpha\in\Ac$.
  Given $u\in\Cd^{1,2}_b(\R_+\x\R^d)$ valued in $[0,1]$, the process 
  \begin{multline*}
   \varGamma^{t,\mu,\alpha}_s \prod_{i\in V^{t,\mu,\alpha}_{s}} {u\left(s, \X{i}_{s}\right)} - \prod_{i\in V} {u\left(t, x^i\right)}  \\
   - \int_t^{s} {\varGamma^{t,\mu,\alpha}_\theta \sum_{i\in V^{t,\mu,\alpha}_{\theta}} { \left(\partial_{t} {u} + \Gc^{\alpha^i_{\theta}} {u} - c^{\alpha^i_{\theta}} u\right) \left(\theta,\X{i}_{\theta}\right) \prod_{j\in V^{t,\mu,\alpha}_{\theta}\setminus\{i\}} {u\left(\theta,\X{j}_{\theta}\right)}} \,d\theta},\ s\geq t,
  \end{multline*}
  is a c\`adl\`ag martingale.
 \end{corollary}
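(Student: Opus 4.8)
The plan is to read off the statement from Proposition~\ref{prop:dsm} by choosing $F$ and $f$ so that $F_f(\mu)=\prod_{i\in V}u(x^i)$, and then to bring in the discount factor $\varGamma^{t,\mu,\alpha}$ by an integration-by-parts argument. The natural choice is $f=\ln u$ and $F=\exp$, since then $F(\langle\mu,f\rangle)=\exp(\sum_i\ln u(x^i))=\prod_i u(x^i)$. I would first carry this out under the extra assumption $u\geq\delta$ for some $\delta>0$, so that $f(s,\cdot):=\ln u(s,\cdot)\in\Cd^{1,2}_b$. Since $u\leq1$ we have $f\leq0$, hence $\langle\mu,f\rangle\leq0$ and, for every $i$ and $k\geq0$, $\langle\mu,f\rangle+(k-1)f(x^i)=\sum_{j\neq i}f(x^j)\leq0$; therefore one may replace $\exp$ by any $F\in\Cd^2_b(\R)$ agreeing with $\exp$ on $(-\infty,0]$ (such an $F$ exists because $\exp$ and its first two derivatives are bounded there) without changing any quantity in Proposition~\ref{prop:dsm}.

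The computation is then routine: using $D_x f=D_x u/u$, the identity $\Lc^a f=\Lc^a u/u-|D_x u\,\sigma|^2/(2u^2)$, the fact that $F=F'=F''=\prod u$ on the relevant range, and $\exp(\langle\mu,f\rangle+(k-1)f(x^i))=u(x^i)^k\prod_{j\neq i}u(x^j)$, the two second-order contributions combine to $\sum_i\Lc^{a^i}u(x^i)\prod_{j\neq i}u(x^j)$ and the branching term to $\sum_i\gamma(x^i,a^i)(\sum_k p_k u(x^i)^k-u(x^i))\prod_{j\neq i}u(x^j)$, i.e. exactly $\sum_i\Gc^{a^i}u(x^i)\prod_{j\neq i}u(x^j)$; similarly the $\partial_t$ term yields $\sum_i\partial_t u(x^i)\prod_{j\neq i}u(x^j)$. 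Hence $P_s:=\prod_{i}u(s,\X{i}_s)$ admits the decomposition $dP_s=\sum_i(\partial_t u+\Gc^{\alpha^i_s}u)(s,\X{i}_s)\prod_{j\neq i}u(s,\X{j}_s)\,ds+dM_s$ with $M$ a c\`adl\`ag martingale.

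Next I would incorporate $\varGamma^{t,\mu,\alpha}$. It is continuous and of finite variation with $d\varGamma_s=-\varGamma_s\langle\Z{}_s,c^{\alpha_s}\rangle\,ds$, so integration by parts gives $d(\varGamma_s P_s)=\varGamma_s\,dP_s+P_s\,d\varGamma_s$ with no bracket term. The drift of $\varGamma_s\,dP_s$ reproduces $\varGamma_s\sum_i(\partial_t u+\Gc^{\alpha^i}u)\prod_{j\neq i}u$, while $P_s\,d\varGamma_s=-\varGamma_s\sum_i c^{\alpha^i_s}u(s,\X{i}_s)\prod_{j\neq i}u(s,\X{j}_s)\,ds$ supplies precisely the missing $-c^{\alpha^i}u$ contribution; together these are the integrand of the corollary. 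Since $\varGamma$ is bounded by $1$ and predictable, $\int_t^\cdot\varGamma_\theta\,dM_\theta$ is again a martingale, which proves the claim when $u\geq\delta$.

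Finally, to remove the lower bound I would approximate a general $u\in\Cd^{1,2}_b$ valued in $[0,1]$ by $u_n:=(u+1/n)/(1+1/n)\in[1/(n+1),1]$, which is still in $\Cd^{1,2}_b$ and converges to $u$ together with its derivatives, uniformly. The corollary holds for each $u_n$; passing to the limit, one uses $\varGamma\leq1$, $0\leq\prod u_n,\prod u\leq1$ and the moment bound $\E[\sup_{t\leq\theta\leq s}N_\theta]<\infty$ from Proposition~\ref{prop:def} to dominate every term and apply dominated convergence, so that the fixed-time $L^1$-limit of martingales is again a martingale. I expect the degeneracy at $u=0$ to be the main obstacle: the shortcut $f=\ln u$ is unavailable there, and one must check both that the truncation of $\exp$ is harmless and that the approximation passes to the limit. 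Alternatively one can bypass $\ln u$ entirely by applying the generalized It\^o formula of~\cite{ikeda89} directly to $\varGamma_s\prod_i u(s,\X{i}_s)$, verifying by hand that a branching event replacing particle $i$ by $k$ offspring at its death site multiplies the product by $u^k/u$ and that its compensator is $\gamma\sum_k p_k(u^k-u)\prod_{j\neq i}u$, which recombines with $\Lc$ into $\Gc$ as above.
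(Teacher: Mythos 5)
Your proposal is correct and follows essentially the same route as the paper: apply Proposition~\ref{prop:dsm} with an exponential $F$ and a logarithmic $f$ (your $F=\exp$, $f=\ln u$ is the paper's $F(x)=\e{-x}$, $f=-\ln u$ up to sign, and your truncation of $\exp$ off the relevant half-line makes precise what the paper abbreviates as $F\in\Cd^2_b(\R_+)$), incorporate $\varGamma^{t,\mu,\alpha}$ by integration by parts using $d\varGamma_s=-\varGamma_s\langle \Z{}_s,c^{\alpha_s}\rangle\,ds$, and remove the lower bound on $u$ via the identical approximation $u_n=(u+\frac{1}{n})/(1+\frac{1}{n})$. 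The only blemish is the identity $\langle\mu,f\rangle+(k-1)f(x^i)=\sum_{j\neq i}f(x^j)$, which should read $\sum_{j\neq i}f(x^j)+kf(x^i)$; since $f\leq 0$ the needed bound $\leq 0$ still holds, so nothing breaks.
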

 
 \begin{proof}
  We start by assuming that $c\equiv 0$. If there exists $\varepsilon>0$ such that $\varepsilon\leq u\leq 1$, then the result is a straightforward application of Proposition~\ref{prop:dsm} with $F(x)=\exp{(-x)}$ and $f(s,x)=-\ln{(u(s,x))}$, which belongs respectively to  $\Cd^2_b(\R_+)$ and $\Cd^{1,2}_b(\R_+\x\R^d)$. 
  Else we consider the sequence $(u_n)_{n\in\N}$  given by $u_n=\frac{u+\frac{1}{n}}{1+\frac{1}{n}}$. Clearly, $u_n\in\Cd_b^{1,2}(\R_+\x\R^d)$ and $\frac{1}{n+1}\leq u_n\leq 1$. In addition, $u_n$ and its partial derivatives converge uniformly to $u$ and its partial derivatives. Hence, the result follows by applying the first step of the proof to $u_n$ and taking the limit $n\to \infty$. In the general case $c\geq 0$, we conclude by the integration by part formula since 
  \begin{equation*}
  	\varGamma^{t,\mu,\alpha}_s 
  	= 1 - \int_t^s {\varGamma^{t,\mu,\alpha}_\theta \sum_{i\in V^{t,\mu,\alpha}_\theta} c\left(\X{i}_\theta,\alpha^i_\theta\right) \,d\theta }.
  \end{equation*}
 \end{proof}

\subsection{Dependence on parameters}

 To the best of our knowledge, even in the uncontrolled setting, there is no result in the literature concerning the regularity of branching diffusion processes w.r.t. the parameters characterizing their dynamic. The proposition below fills this gap.

 Let $\bt$, $\sigmat$, $\gammat$ and $(\pt_k)_{k\in\N}$ be respectively some drift, diffusion coefficient, death rate and progeny distribution satisfying Assumption~\ref{hyp:pop}. Given $(t,\mu,\alpha)\in\R_+\x\Ed\x\Ac$, we denote by 
 \begin{equation*}
   \tZ{t,\mu,\alpha}_s = \sum_{i\in \Vt^{t,\mu,\alpha}_s} {\delta_{(i,\tX{i}_s)}},\quad s\geq t,
 \end{equation*}
 the solution of~\eqref{eq:eds} where $b$, $\sigma$, $\gamma$ and $(p_k)_{k\in\N}$ are replaced by $\bt$, $\sigmat$, $\gammat$ and $(\pt_k)_{k\in\N}$.

 \begin{proposition}\label{prop:perturbation}
  Given $n_0\in\N$ and $\delta > 0$, there exists a modulus of continuity $\rho:\R_+\to\R_+$ such that for all $t\in[0,T]$, $\mu\in\Ed$ satisfying $\mu(\Ic\x\R^d)\leq n_0$ and $\alpha\in\Ac$,
  \begin{multline*}
    \P\left(\forall\,s\in\left[t,T\right],\ V^{t,\mu,\alpha}_s = \Vt^{t,\mu,\alpha}_s,\ \sup_{i\in V^{t,\mu,\alpha}_s} {\left\{\left|\X{i}_s-\tX{i}_s\right|\right\}}\leq \delta\right) \\
    \geq 1 - \rho\left(\|b-\tilde{b}\| + \|\sigma-\tilde{\sigma}\| + \|\gamma-\tilde{\gamma}\| + \sum_{k\in\N}{\frac{\|p_k-\tilde{p}_k\|}{2^k}} \right).
  \end{multline*}
  where $\|\cdot \|$ denotes the supremum norm.
 \end{proposition}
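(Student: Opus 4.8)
The plan is to realise both populations on a single probability space via the construction of Section~\ref{sec:strongbd}, driving $\Z{t,\mu,\alpha}$ and $\tZ{t,\mu,\alpha}$ with the \emph{same} Brownian motions $(B^i)$ and the \emph{same} Poisson measures $(Q^i)$, and using the common phantom rate $\gammab$ (which bounds both $\gamma$ and $\gammat$) to generate the potential jump times $(S_k)$. The virtue of this coupling is that, as long as the two genealogies agree, the alive sets $V_{k-1}=\Vt_{k-1}$ coincide and hence the rings $S_k$ coincide as well, so that the entire discrepancy is concentrated at the branching marks. I would then introduce the index $\tau$ of the first disagreement, namely the first ring $S_k$ at which the common mark $\zeta_k$ falls in the region where the two offspring rules differ, and the gap process $\Delta_s:=\sup_{i\in V_s}|\X{i}_s-\tX{i}_s|$, well defined for $s<S_\tau$. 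The complement of the event in the statement is then contained in $\{S_\tau\le T\}\cup\{\sup_{s\le T\wedge S_\tau}\Delta_s>\delta\}$, and everything reduces to bounding these two events by the parameter distance $d:=\|b-\bt\|+\|\sigma-\sigmat\|+\|\gamma-\gammat\|+\sum_k 2^{-k}\|p_k-\pt_k\|$.

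For the positions, while the genealogies agree each matched particle solves the two SDEs driven by the same $B^i$ from the same initial datum (the point $x^i$ at time $t$, or the mother's position at a birth). Splitting $b(\X{i})-\bt(\tX{i})=\bigl(b(\X{i})-b(\tX{i})\bigr)+\bigl(b(\tX{i})-\bt(\tX{i})\bigr)$ and likewise for $\sigma$, the Lipschitz bound of Assumption~\ref{hyp:pop}(i) together with the Burkholder--Davis--Gundy and Gr\"onwall inequalities yields, after summing over the alive particles and invoking the moment bound~\eqref{eq:moment},
\[
 \E\Big[\sup_{s\le T\wedge S_\tau}\Delta_s^2\Big]\le C(n_0,T)\bigl(\|b-\bt\|^2+\|\sigma-\sigmat\|^2\bigr).
\]
By Chebyshev's inequality this makes $\P(\sup_{s\le T\wedge S_\tau}\Delta_s>\delta)$ tend to $0$ as $\|b-\bt\|+\|\sigma-\sigmat\|\to0$; the same estimate shows that on the complementary event the actual gap $\Delta$ is in fact of order $d$, which is what will feed the branching comparison.

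The core estimate is the per-ring probability of a branching disagreement. At a ring carrying the mother $J_k$ at positions $x=\X{J_k}_{S_k}$, $\tilde x=\tX{J_k}_{S_k}$ under the control $a$, the outcome is the deterministic function of the uniform mark $\zeta_k\in[0,\gammab]$ prescribed by the partition $(I_l(x,a))_l\cup[\gamma(x,a),\gammab]$ for the first process and by its tilded analogue for the second; hence the conditional disagreement probability is $\gammab^{-1}$ times the Lebesgue measure of the set where the two partitions give different outcomes. Writing the partition endpoints $q_l(x,a):=\gamma(x,a)\sum_{k\le l}p_k(x,a)$ and their tilded analogues $\tilde q_l$, I would bound that measure by $|\gamma(x,a)-\gammat(\tilde x,a)|$ (the death/survival threshold), plus $\sum_{l=0}^{K_1}|q_l(x,a)-\tilde q_l(\tilde x,a)|$ on the truncated range $l\le K_1$, plus the tail of offspring numbers exceeding $K_1$, which by Markov's inequality and Assumption~\ref{hyp:pop}(iii) is at most $2\gammab M/K_1$. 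For the truncated sum I split
\[
 |q_l(x,a)-\tilde q_l(\tilde x,a)|\le |q_l(x,a)-q_l(\tilde x,a)|+|q_l(\tilde x,a)-\tilde q_l(\tilde x,a)|,
\]
the first term being controlled by the modulus of continuity of $\gamma,(p_k)$ from Assumption~\ref{hyp:main} evaluated at $|x-\tilde x|\le\Delta$, and the second by $\|\gamma-\gammat\|+\gammab\sum_{k\le l}\|p_k-\pt_k\|$. Since $\sum_{k\le K_1}\|p_k-\pt_k\|\le 2^{K_1}\sum_k 2^{-k}\|p_k-\pt_k\|\le 2^{K_1}d$, the whole truncated sum is small once $d$ is small; this is precisely where the weighting $2^{-k}$ in the statement is needed, and it reflects the fact that only finitely many offspring numbers can be compared uniformly.

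It remains to fix the order of the quantifiers. Given a target $\epsilon>0$: first choose $K_0$ so that, by~\eqref{eq:moment} and Markov, the number of jumps of $\Z{t,\mu,\alpha}$ in $[t,T]$ exceeds $K_0$ with probability $<\epsilon/4$; then choose the truncation level $K_1$ so that $K_0\cdot 2\gammab M/K_1<\epsilon/4$; finally choose a threshold $\eta$ on $d$ small enough that the position estimate gives $\P(\sup\Delta_s>\delta)<\epsilon/4$ and that, on the matched event with $\Delta\le\delta$ and at most $K_1$ offspring per ring, each of the at most $K_0$ rings disagrees with conditional probability $<\epsilon/(4K_0)$ — using that both the continuity contribution (small because $\Delta$ is of order $d$) and the parameter contribution $2^{K_1}d$ are small for $d<\eta$. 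A union bound over the rings gives $\P(S_\tau\le T,\ \sup\Delta\le\delta)<\epsilon/4$, and collecting the four terms yields probability at least $1-\epsilon$ whenever $d<\eta$; the correspondence $\epsilon\mapsto\eta$ defines the modulus $\rho$. The main difficulty is the interlocking of the genealogical and spatial comparisons: the branching rule can only be compared while the particles are matched and close, whereas closeness is only guaranteed while the genealogies match. This forces the sequential, bootstrapped control organised around $\tau$ above, while the secondary subtlety — the unbounded offspring number — is disposed of by the truncation at $K_1$ that exposes the role of the $2^{-k}$ weighting.
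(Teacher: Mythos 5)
Your proposal is correct and follows the same overall strategy as the paper's proof: the identical coupling (same $(B^i,Q^i)_{i\in\Ic}$, common phantom rate $\gammab$ so that matched genealogies share their ring times), reduction to the first genealogical disagreement, the per-ring computation using that the mark $\zeta_l$ is uniform on $[0,\gammab]$ and independent of the past, so that the conditional disagreement probability is $\gammab^{-1}$ times the measure of the symmetric difference of the two partitions --- which is exactly the content of the paper's Lemma~\ref{lem:inter}, including your truncation at offspring level $K_1$ with the Markov tail $\gammab M/K_1$ from Assumption~\ref{hyp:pop}~(iii) and the observation that $\sum_{k\le K_1}\|p_k-\pt_k\|\le 2^{K_1}\sum_k 2^{-k}\|p_k-\pt_k\|$ explains the weighting --- and a final ordered choice of constants defining $\rho$. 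Three implementation points diverge, all legitimately. First, for the positions the paper restricts to $\{S_k>T,\,N^*_T\le n\}$, observes that the potential label set then has cardinality $C_{n,k}$, and union-bounds with a single controlled-diffusion perturbation estimate applied along each ancestral line (the concatenated path of a label and its ancestors solves the two SDEs from the \emph{same} initial point, driven by the same concatenated Brownian motion); you instead run a BDG--Gr\"onwall estimate on $\sum_{i\in V_s}|\X{i}_s-\tX{i}_s|^2$. This is the one step you gloss: closing Gr\"onwall for the supremum over a branching family needs the usual $ab\le\frac{1}{2}(\varepsilon a^2+\varepsilon^{-1}b^2)$ absorption after BDG plus a localization, and you must account for the upward jumps of the summed discrepancy at births (each child inherits its mother's gap, so the compensator adds a term of order $\gammab M$ to the Gr\"onwall constant); the estimate is true with $C(n_0,T)$ as you claim, but the paper's cruder route via~\eqref{eq:moment} and the label-set union bound avoids these complications entirely. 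Second, you bound the ring count by Markov on its expectation, where the paper conditions on $\{N^*_T\le n\}$ and dominates $S_k$ stochastically by a Gamma$(k,n\gammab)$ variable --- equivalent in effect. Third, the paper must avoid evaluating the modulus of continuity at the fixed target $\delta$, which it does by proving the bound at an auxiliary level $\delta'\le\delta$ and sending $\delta'\to0$ first; you achieve the same by feeding the per-ring estimate the high-probability event that the gap is of order $d$ (Chebyshev on your $L^2$ bound), which is a clean substitute. Net: same proof skeleton, with your version marginally sharper on the position side ($L^2$, first order in $d$ squared) but requiring more care there, and interchangeable everywhere else.
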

 
  \begin{proof}
  Once again, we omit the indices $(t,\mu,\alpha)$ in the notations. We start by observing that for any $n\geq n_0$ and $k\geq 1$,
  \begin{multline*}
  	\P\left(\forall\,s\in\left[t,T\right],\ V_s = \Vt_s,\ \sup_{i\in V_s} {\left\{\left|\X{i}_s-\tX{i}_s\right|\right\}}\leq \delta\right) \\
  	\geq \P\left(\forall\,s\in\left[t,T\right],\ V_s = \Vt_s,\ \sup_{i\in V_s} {\left\{\left|\X{i}_s-\tX{i}_s\right|\right\}}\leq \delta, S_k>T, N^{*}_T\leq n\right),
  \end{multline*}
  where $N^*_T=\sup_{t\leq s\leq T} N_s$ and $S_k$ is the $k$-th potential jumping time defined in Section~\ref{sec:strongbd}. Further we have 
   \begin{multline}\label{eq:perturbation}
     \P\left(\forall\,s\in\left[t,T\right],\ V_s = \Vt_s,\ \sup_{i\in V_s} {\left\{\left|\X{i}_s-\tX{i}_s\right|\right\}}\leq \delta, S_k>T, N^{*}_T\leq n\right) \\
     \begin{aligned}
     = 1 & - \P\left(N^{*}_T > n\right) - \P\left(S_k\leq T, N^*_T\leq n\right) \\
         & - \P\left(\forall\,s\in\left[t,T\right],\ V_s = \Vt_s,\ \sup_{t\leq s\leq T} \sup_{i\in V_s} {\left\{\left|\X{i}_s-\tX{i}_s\right|\right\}} > \delta, S_k>T, N^{*}_T\leq n\right) \\
         & - \P\left(\exists\,s\in\left[t,T\right],\ V_s \neq \Vt_s,\ S_k>T, N^{*}_T\leq n\right)    
     \end{aligned}
   \end{multline}
  By Proposition~\ref{prop:def}, the second term on the r.h.s. satisfies
  \begin{equation*}
  	\P\left(N^{*}_T > n\right) \leq \frac{C}{n},
  \end{equation*}
  where $C = n_0 e^{\gammab M T}$. 
  As for the third term, we first notice that, on the event $\{N^{*}_T \leq n\}$, $S_k$ is bounded from below by a sum of $k$ independent exponentially distributed random variables with identical parameter $n\gammab$. It follows that 
  \begin{equation*}
  	\P\left(S_k\leq T, N^{*}_T\leq n\right)\leq F_{n,k}(T)
  \end{equation*}
  where $F_{n,k}$ is the cumulative distribution of the gamma distribution with shape parameter $k$ and rate parameter $n\gammab$.
  Let us turn now to the fourth term on the r.h.s. of~\eqref{eq:perturbation}. On the event $\{S_k>T, N^*_T\leq n\}$, it is clear that for all $s\in[t,T]$,
  	\begin{equation*}
  		V_{s}\subset V\cup\Big\{ii_1\ldots i_l;\ i\in V,\ 0\leq i_1,\ldots, i_l\leq n-1,\ 1\leq l\leq k-1 \Big\}.
 	\end{equation*}
 	Since the cardinal of the set on  the r.h.s. is $C_{n,k}=n_0 \sum_{l=0}^{k-1}{n^l}$, we deduce that
  	\begin{multline*}
  		\P\left(\forall\,s\in\left[t,T\right],\ V_s = \Vt_s,\ \sup_{t\leq s\leq T} \sup_{i\in V_s} {\left\{\left|\X{i}_s-\tX{i}_s\right|\right\}} > \delta, S_k>T, N^{*}_T\leq n\right) \\
  		\leq C_{n,k} \sup_{\alpha\in\Ac} \sup_{x\in\R^d} {\P\left(\sup_{t\leq s\leq T} {\left\{\left|\X{t,x,\alpha}_s-\tX{t,x,\alpha}_{s}\right|\right\}} > \delta\right)},
  	\end{multline*}
  	where $\X{t,x,\alpha}$ and $\tX{t,x,\alpha}$ are respectively the solution of
  	\begin{gather*}
  		\X{t,x,\alpha}_s = x + \int_t^s {b\left(\X{t,x,\alpha}_\theta,\alpha^{\emptyset}_\theta\right) d\theta} + \int_t^s {\sigma\left(\X{t,x,\alpha}_\theta,\alpha^{\emptyset}_\theta\right) dB^{\emptyset}_\theta}, \quad s\geq t, \\
  		\tX{t,x,\alpha}_s = x + \int_t^s {\bt\left(\tX{t,x,\alpha}_\theta,\alpha^{\emptyset}_\theta\right) d\theta} + \int_t^s {\sigmat\left(\tX{t,x,\alpha}_\theta,\alpha^{\emptyset}_\theta\right) dB^{\emptyset}_s}, \quad s\geq t.
  	\end{gather*}
 	Under Assumption~\ref{hyp:pop}, it follows by classical arguments from the theory of (controlled) diffusions that
    \begin{equation*}
		\P\left(\sup_{t\leq s\leq T} {\left\{\left|\X{t,x,\alpha}_s-\tX{t,x,\alpha}_{s}\right|\right\}} > \delta\right) \leq \frac{C'}{\delta}\left(\|b - \bt\| + \|\sigma-\sigmat\|\right),
  	\end{equation*}
  	where the constant $C'$ does not depend on $t$, $x$ and $\alpha$. Hence, we deduce that
  	\begin{multline*}
		\P\left(\forall\,s\in\left[t,T\right],\ V_s = \Vt_s,\ \sup_{t\leq s\leq T}\sup_{i\in V_s} {\left\{\left|\X{i}_s-\tX{i}_s\right|\right\}} > \delta, S_k>T, N^{*}_T\leq n\right) \\
		\leq  \frac{C' C_{n,k}}{\delta} \left(\|b - \bt\| + \|\sigma-\sigmat\|\right).
	\end{multline*}
	It remains to deal with the fifth term on the r.h.s. of~\eqref{eq:perturbation}. First, we observe that 
	\begin{multline}\label{eq:perturbation2}
		\P\left(\exists\,s\in\left[t,T\right],\ V_s \neq \Vt_s,\ S_k>T, N^{*}_T\leq n\right) \\
			  = \sum_{l=1}^{k-1} {\P\left(\forall\, 0\leq l'\leq l-1,\ \ V_{S_{l'}} = \Vt_{S_{l'}},\ V_{S_l} \neq \Vt_{S_l},\ S_k>T, N^{*}_T\leq n\right)}
	\end{multline}
	In addition, we have for all $1\leq l\leq k-1$,
	\begin{multline*}
		\P\left(\forall\, l'\leq l-1,\ \ V_{S_{l'}} = \Vt_{S_{l'}},\ V_{S_l} \neq \Vt_{S_l},\ S_k>T, N^{*}_T\leq n\right) \\
			 \leq \P\left(\forall\, l'\leq l-1,\ \ V_{S_{l'}} = \Vt_{S_{l'}},\ \sup_{i\in V_{S_{l-1}}} {\left\{\left|\X{i}_{S_l}-\tX{i}_{S_l}\right|\right\}} > \delta,\ S_k>T, N^{*}_T\leq n\right) \\
			 + \P\left(V_{S_{l-1}} = \Vt_{S_{l-1}},\ V_{S_{l}} \neq \Vt_{S_{l}},\ \sup_{i\in V_{S_{l-1}}} {\left\{\left|\X{i}_{S_l}-\tX{i}_{S_l}\right|\right\}} \leq \delta\right).
	\end{multline*}
	Further, by the same arguments used to deal with the fourth term on the r.h.s. of~\eqref{eq:perturbation}, it holds
	\begin{multline*}
	  \P\left(\forall\, l'\leq l-1,\ \ V_{S_{l'}} = \Vt_{S_{l'}},\ \sup_{i\in V_{S_{l-1}}} {\left\{\left|\X{i}_{S_l}-\tX{i}_{S_l}\right|\right\}} > \delta,\ S_k>T, N^{*}_T\leq n\right) \\
	    \leq  \frac{C' C_{n,l}}{\delta} \left(\|b - \bt\| + \|\sigma-\sigmat\|\right).
	\end{multline*}
	In addition, we denote for all $x,y\in\R^d$ and $a\in A$,
	\begin{equation}\label{eq:intersection}
   		I^a\left(x,y\right) := \bigcup_{k\geq0} \big(I_k\left(x,a\right)\cap \tilde{I}_k\left(y,a\right)\big) \cup \big(\left[\gamma(x,a),\gammab\right]\cap[\tilde{\gamma}(y,a),\gammab]\big),
	\end{equation}
	where 
	\begin{equation*}
		\tilde{I}_k\left(y,a\right) := \left[\gammat\left(y,a\right) \sum_{l=0}^{k-1} {\pt_{l}\left(y,a\right)},\gammat\left(y,a\right) \sum_{l=0}^{k} {\pt_{l}\left(y,a\right)}\right).
	\end{equation*}
	With the notations of Section~\ref{sec:strongbd}, we have 
	\begin{multline*}
		\P\left(V_{S_{l-1}} = \Vt_{S_{l-1}},\ V_{S_l} \neq \Vt_{S_l},\ \sup_{i\in V_{S_{l-1}}} {\left\{\left|\X{i}_{S_l}-\tX{i}_{S_l}\right|\right\}} \leq \delta\right) \\
		\begin{aligned}
			& \leq \P\left(\ V_{S_{l-1}} = \Vt_{S_{l-1}}, \zeta_l \notin I^{\alpha^{J_{l}}_{S_{l}}}\left(\X{i}_{S_l},\tX{i}_{S_l}\right), \ \sup_{i\in V_{S_{l-1}}} {\left\{\left|\X{i}_{S_l}-\tX{i}_{S_l}\right|\right\}} \leq \delta\right) \\
			& \leq \E\left[\mathds{1}_{V_{S_{l-1}}=\Vt_{S_{l-1}}}  \frac{\gammab - \left|I^{\alpha^{J_l}_{S_l}}\left(\X{i}_{S_l},\X{i}_{S_l}\right)\right|}{\gammab} \prod_{i\in V_{S_{l-1}}}\mathds{1}_{\left|\X{i}_{S_l}-\tX{i}_{S_l}\right| \leq \delta}\right],
		\end{aligned}
	\end{multline*}
	where the last inequality results from the fact that $\zeta_l$ is a random variable uniformly distributed on $[0,\gammab]$, independent from the other random variables. Using Lemma~\ref{lem:inter} below, we deduce that 
	\begin{multline*}
		\P\left(V_{S_{l-1}} = \Vt_{S_{l-1}},\ V_{S_l} \neq \Vt_{S_l},\ \sup_{i\in V_{S_{l-1}}} {\left\{\left|\X{i}_{S_l}-\tX{i}_{S_l}\right|\right\}} \leq \delta\right) \\
		\leq \rho'\left(\delta + \|\gamma-\gammat\| + \sum_{k\in\N}{\frac{\|p_k-\pt_k\|}{2^k}}\right),
	\end{multline*}
	where $\rho':\R_+\to\R_+$ is a modulus of continuity.
	By~\eqref{eq:perturbation2}, we deduce that 
    	\begin{multline*}
		\P\left(\exists\,s\in\left[t,T\right],\ V_s \neq \Vt_s,\ S_k>T, N^{*}_T\leq n\right) \\
		\leq  \frac{C_{n,k-1}'}{\delta} \left(\|b - \bt\| + \|\sigma-\sigmat\|\right) + k\rho'\left(\delta + \|\gamma-\gammat\| + \sum_{k\in\N}{\frac{\|p_k-\pt_k\|}{2^k}}\right),
	\end{multline*}
	where $C_{n,k-1}'=C'\sum_{l=1}^{k-1}{C_{n,l}}$.
	We are now in a position to conclude the proof. In view of the arguments above, it follows from~\eqref{eq:perturbation} that  for any $\delta'\leq\delta$,
	\begin{multline*}
    	\P\left(\forall\,s\in\left[t,T\right],\ V_s = \Vt_s,\ \sup_{i\in V_s} {\left\{\left|\X{i}_s-\tX{i}_s\right|\right\}}\leq \delta\right) \\
    	\begin{aligned}
    		& \geq \P\left(\forall\,s\in\left[t,T\right],\ V_s = \Vt_s,\ \sup_{i\in V_s} {\left\{\left|\X{i}_s-\tX{i}_s\right|\right\}}\leq \delta'\right) \\
    		& \geq 1 - \frac{C}{n} - F_{n,k}(T) - \frac{C_{n,k}'}{\delta'} \left(\|b - \bt\| + \|\sigma-\sigmat\|\right)
    	\end{aligned} 
    	\\
    	 - k \rho'\left(\delta' + \|\gamma-\tilde{\gamma}\| + \sum_{k\in\N}{\frac{\|p_k-\tilde{p}_k\|}{2^k}} \right).
  \end{multline*}
  The conclusion follows by sending successively $\|b-\tilde{b}\| + \|\sigma-\tilde{\sigma}\| + \|\gamma-\tilde{\gamma}\| + \sum_{k\in\N}{\frac{\|p_k-\tilde{p}_k\|}{2^k}}$ to $0$, $\delta'$ to $0$, $k$ to $\infty$ and $n$ to $\infty$.
 \end{proof}

 \begin{lemma}\label{lem:inter}
  There exists a modulus of continuity $\rho:\R_+\to\R_+$ such that for all $x,y\in\R^d$ and $a\in A$,
  \begin{equation*}
   \frac{\big|I^a\left(x,y\right)\big|}{\gammab} \geq 1 - \rho\left(|x-y| + \|\gamma-\tilde{\gamma}\| + \sum_{k\in\N}{\frac{\|p_k-\tilde{p}_k\|}{2^k}}\right),
  \end{equation*}
  where $I^a(x,y)$ is given by~\eqref{eq:intersection} and $|I^a\left(x,y\right)|$ denotes its Lebesgue measure. 
 \end{lemma}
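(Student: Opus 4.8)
The plan is to pass to the complementary ``disagreement set'' $D:=[0,\gammab]\setminus I^a(x,y)$, since $|I^a(x,y)|/\gammab = 1-|D|/\gammab$, and to show that $|D|$ is small, uniformly in $a$. Writing $\beta^x_k:=\gamma(x,a)\sum_{l=0}^k p_l(x,a)$ and $\beta^y_k:=\gammat(y,a)\sum_{l=0}^k \pt_l(y,a)$, with the convention $\beta^x_{-1}=\beta^y_{-1}=0$, one has $I_k(x,a)=[\beta^x_{k-1},\beta^x_k)$ and $\tilde I_k(y,a)=[\beta^y_{k-1},\beta^y_k)$, while $\beta^x_k\uparrow\gamma(x,a)$ and $\beta^y_k\uparrow\gammat(y,a)$. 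Thus $(I_k(x,a))_{k\ge0}$ together with $[\gamma(x,a),\gammab]$, and $(\tilde I_k(y,a))_{k\ge0}$ together with $[\gammat(y,a),\gammab]$, form two partitions of $[0,\gammab]$, and by definition $I^a(x,y)$ is precisely the set of points lying in cells of equal index (with the two ``no-birth'' cells identified). Hence $D$ is exactly the set of points receiving different labels under the two partitions.

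The key difficulty is that a naive endpoint estimate $|D|\le 2\sum_{k\ge0}|\beta^x_k-\beta^y_k|$ diverges, because $|\beta^x_k-\beta^y_k|\to|\gamma(x,a)-\gammat(y,a)|$ does not vanish as $k\to\infty$. To circumvent this I will truncate at a level $K$ and exploit Assumption~\ref{hyp:pop}(iii): by Markov's inequality $\sum_{k>K}p_k(x,a)\le M/(K+1)$, and likewise for $\pt$. Setting $\nu:=\min(\gamma(x,a),\gammat(y,a))$, I split $[0,\gammab]$ into the common no-birth block $[\max(\gamma(x,a),\gammat(y,a)),\gammab]\subset I^a(x,y)$; the mismatch band, of measure $|\gamma(x,a)-\gammat(y,a)|$; the region $[\min(\beta^x_K,\beta^y_K),\nu)$ where at least one label exceeds $K$, whose measure is $\le\gammab M/(K+1)$ since $\min$ is $1$-Lipschitz and $\gamma(x,a)-\beta^x_K=\gamma(x,a)\sum_{k>K}p_k(x,a)$; and the bottom region $[0,\min(\beta^x_K,\beta^y_K))$, where both labels are $\le K$ and the disagreement is contained in $\bigcup_{k=0}^K\big(I_k(x,a)\triangle\tilde I_k(y,a)\big)$, of measure at most $2\sum_{k=0}^K|\beta^x_k-\beta^y_k|$. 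Summing gives $|D|\le|\gamma(x,a)-\gammat(y,a)|+\gammab M/(K+1)+2\sum_{k=0}^K|\beta^x_k-\beta^y_k|$.

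It then remains to control, for fixed $K$, the finitely many endpoint displacements. From $|\beta^x_k-\beta^y_k|\le|\gamma(x,a)-\gammat(y,a)|+\gammab\sum_{l=0}^k|p_l(x,a)-\pt_l(y,a)|$ and the splitting $|p_l(x,a)-\pt_l(y,a)|\le|p_l(x,a)-p_l(y,a)|+\|p_l-\pt_l\|$ (and similarly for $\gamma$), the spatial increments are controlled by the common modulus of continuity provided by Assumption~\ref{hyp:main}, while the $2^{-l}$ weight is absorbed through $\sum_{l=0}^K\|p_l-\pt_l\|\le 2^K\sum_{l\ge0}\|p_l-\pt_l\|/2^l$. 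Consequently there are a constant $C(K)$ and a modulus $\psi$ with $\sum_{k=0}^K|\beta^x_k-\beta^y_k|\le C(K)\,\psi(\eta)$ and $|\gamma(x,a)-\gammat(y,a)|\le\psi(\eta)$, where $\eta:=|x-y|+\|\gamma-\gammat\|+\sum_{k\ge0}\|p_k-\pt_k\|/2^k$, all bounds being uniform in $a$.

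Combining these estimates yields $|D|/\gammab\le \gammab^{-1}\big(1+2C(K)\big)\psi(\eta)+M/(K+1)$ for every $K$, so that $\rho(\eta):=\inf_{K\in\N}\big[\gammab^{-1}(1+2C(K))\psi(\eta)+M/(K+1)\big]$ does the job: given $\varepsilon>0$ one first picks $K$ with $M/(K+1)<\varepsilon/2$ and then $\eta$ small enough that the first term is $<\varepsilon/2$, whence $\rho(0^+)=0$, and $\rho$ may be taken nondecreasing. I expect the main obstacle to be precisely this two-scale mechanism: since the endpoint sum does not converge, one cannot avoid isolating the high-index tail through the moment bound $\sum_k kp_k\le M$ \emph{before} letting the parameters become close, and reconciling this truncation with the $2^{-k}$-weighted control of $\|p_k-\pt_k\|$ and the space-versus-parameter splitting, via the successive choices of $K$ and then $\eta$, is the delicate part of the argument.
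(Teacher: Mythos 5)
Your proof is correct and follows essentially the same strategy as the paper's: truncate the cell index at a level $K$, control the tail via Markov's inequality and the first-moment bound of Assumption~\ref{hyp:pop}(iii) (giving the $\gammab M/K$ term), bound the finitely many remaining cell discrepancies through $|\gamma(x,a)-\gammat(y,a)|$ and $\sum_{l\le k}|p_l(x,a)-\pt_l(y,a)|$ using Assumption~\ref{hyp:main} with the $2^{-k}$ weights absorbed by a factor $2^K$, and conclude by optimizing first in the perturbation size and then in $K$. The only cosmetic difference is that you estimate the complement $[0,\gammab]\setminus I^a(x,y)$ via symmetric differences of cumulative endpoints, whereas the paper bounds each intersection $|I_k(x,a)\cap\tilde I_k(y,a)|$ from below directly; the two bookkeepings are equivalent.
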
 
  
 \begin{proof}
  First we observe that 
    \begin{equation*}
  	\left|\left[\gamma(x,a),\gammab\right]\cap[\tilde{\gamma}(y,a),\gammab]\right| \geq \gammab - \gamma(x,a) - \left|\gamma(x,a)-\gamma(y,a)\right| - \left\|\gamma-\gammat\right\|.
  \end{equation*}
  In addition, we have 
   \begin{multline*}
  	\left|I_k\left(x,a\right)\cap \tilde{I}_k\left(y,a\right)\right|  \geq \gamma(x,a) p_k(x,a) - 2\left(\left|\gamma(x,a)-\gamma(y,a)\right| + \left\|\gamma-\gammat\right\|\right) \\   		
  	 - 2\gammab \sum_{l=0}^{k}{\left(\left|p_k(x,a)-p_k(y,a)\right| + \left\|p_k-\pt_k\right\|\right)}.
  \end{multline*}
  Notice also that for all $K\geq 1$,
  \begin{equation*}
  	\gamma(x,a) - \gamma(x,a)\sum_{k=0}^{K-1} {p_k}(x,a) = \gamma(x,a)\sum_{k=K}^{+\infty} {p_k}(x,a) \leq \frac{\gammab M}{K}
  \end{equation*}
  where $M$ comes from Assumption~\ref{hyp:pop} (iii).
  We deduce that for all $K\geq 1$,
  \begin{multline*}
  	\begin{aligned}
  		\left|I^a(x,y)\right| 
  			& \geq  \left|\left[\gamma(x,a),\gammab\right]\cap[\tilde{\gamma}(y,a),\gammab]\right| + \sum_{k=0}^{K-1} {\left|I_k\left(x,a\right)\cap \tilde{I}_k\left(y,a\right)\right|} \\
  			& \geq \gammab  - \frac{\gammab M}{K} - (2K + 1)\left(\left|\gamma(x,a)-\gamma(y,a)\right| + \left\|\gamma-\gammat\right\|\right)
  	\end{aligned}
  	\\
  	  		- 2\gammab \sum_{k=0}^{K-1} {(K-k) \left(\left|p_k(x,a)-p_k(y,a)\right| + \left\|p_k-\pt_k\right\|\right)}.	
  \end{multline*}
  The conclusion follows immediately from Assumption~\ref{hyp:main}.
 \end{proof}

 \section{Dynamic programming: the smooth case}\label{sec:hjb-smooth}
 
 The aim of this section is to show that, under stringent conditions, the value function satisfies the DPP and the HJB equation in the classical sense. More precisely, using a result due to Krylov~\cite{krylov87}, we prove in Section~\ref{sec:hjb-smooth-sub} that there exists a classical solution to the HJB equation. Then, extending the approach of Fleming and Soner~\cite{fleming06}, we show that this solution satisfies the DPP in Section~\ref{sec:dpp-smooth}. As a consequence, we deduce that this solution coincides with the value function.
  
  \subsection{Hamilton-Jacobi-Bellman equation}\label{sec:hjb-smooth-sub}

 The proposition below ensures that, under the following assumptions, there exists a smooth solution to the HJB equation~\eqref{eq:hjb}. 
 
 \begin{assumption}\label{hyp:smooth}
 	\rmi $g\in\Cd^3_b(\R^d)$ \\
 	\rmii for $\varphi = b, \sigma, \gamma, p_k, c$, $\varphi(\cdot,a)\in \Cd^2(\R^d)$ such that $\varphi$ and its partial derivatives are bounded on $\R^d\x A$; \\
 	\rmiii there exists $K\in\N$ such that $p_k\equiv 0$ for all $k\geq K+1$ \\
 	\rmiv there exists $\underline{c}>0$ such that 
 	\begin{equation*}
	  c\left(x,a\right)\geq \underline{c},\quad \forall\, (x,a)\in\R^d\x A;
 	\end{equation*}
 	\rmv there exist $\lambda>0$ such that 
 	 \begin{equation*}
	 \sigma\sigma^*\left(x,a\right)\geq \lambda I_d,\quad \forall\, (x,a)\in\R^d\x A.
 	\end{equation*}
 \end{assumption}
 
 \begin{proposition}\label{prop:hjb-smooth}
 	Under Assumption~\ref{hyp:smooth}, there exists $u\in \Cd^{1,2}_b([0,T]\x\R^d)$ valued in $[0,1]$ such that
 	\begin{equation*}
   		\partial_t u\left(t,x\right) + \inf_{a\in A} {\left\{ \Gc^a {u}\left(t,x\right) - c^a(x) u(t,x) \right\}} = 0,\quad \forall\,\left(t, x\right)\in[0,T)\x\R^d,
 	\end{equation*} 
    satisfying the terminal condition $u(T, \cdot) = g$. In addition, $u$ and its partial derivatives are H\"older continuous on $[0,T]\x\R^d$.
 \end{proposition}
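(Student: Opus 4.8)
The plan is to obtain existence and regularity from Krylov's theory for fully nonlinear uniformly parabolic equations~\cite{krylov87}, after recasting the HJB equation so that it fits the required structural hypotheses. Expanding $\Gc^a$, the equation reads
\[
\partial_t u + \inf_{a\in A}\left\{ \tfrac12 \mathrm{tr}\!\left(\sigma\sigma^*(x,a)\, D^2_x u\right) + b(x,a)\cdot D_x u + \phi\left(x,a,u\right)\right\} = 0,
\]
where $\phi(x,a,r):=\gamma(x,a)\big(\sum_{k\geq 0} p_k(x,a)\, r^k - r\big) - c(x,a)\, r$. For each fixed $a$ the dependence on $D^2_x u$ is linear, so the Bellman Hamiltonian is \emph{concave} in the Hessian; combined with the uniform ellipticity $\sigma\sigma^*\geq\lambda I_d$ of Assumption~\ref{hyp:smooth}(v), this places the equation in the class for which Krylov provides interior $\Cd^{2+\alpha}$ estimates. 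The one feature outside the standard framework is the zeroth-order term $\phi$, which by Assumption~\ref{hyp:smooth}(iii) is a \emph{finite polynomial} in $r$ but is not monotone nonincreasing on $[0,1]$, so the equation is a priori not proper.

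To restore properness I would modify $\phi$ outside the band $[0,1]$: fix a $\Cd^2$ function $\bar\phi(x,a,r)$ that coincides with $\phi$ for $r\in[0,1]$, is globally Lipschitz and monotone nonincreasing in $r$, with $(x,a)$-derivatives bounded on $\R^d\x A$. This is possible because Assumption~\ref{hyp:smooth}(ii)--(iii) makes $\phi$ smooth and bounded on $\R^d\x A\x[0,1]$, and the slope in $r$ can be forced negative beyond the band by an affine extension (the bound $\sum_k k p_k\leq M$ being absorbed using $c\geq\underline c>0$). The modified Cauchy problem
\[
\partial_t u + \inf_{a\in A}\left\{ \tfrac12 \mathrm{tr}\!\left(\sigma\sigma^*(x,a)\, D^2_x u\right) + b(x,a)\cdot D_x u + \bar\phi\left(x,a,u\right)\right\} = 0, \qquad u(T,\cdot)=g,
\]
is then uniformly parabolic, concave in the Hessian, proper, with smooth bounded coefficients and terminal data $g\in\Cd^3_b$, so Krylov's existence theorem yields a solution $u\in\Cd^{1,2}_b([0,T]\x\R^d)$ together with the claimed Hölder estimates. (Alternatively, to stay within a strictly linear-in-$u$ Bellman form, one may \emph{freeze} the nonlinearity, replacing $\sum_k p_k u^k$ by $\sum_k p_k w^k$ for a given $[0,1]$-valued $w$: the bad term becomes a nonnegative bounded source and the zeroth-order coefficient becomes $-(\gamma+c)\leq-\underline c<0$, hence proper; Krylov applies to the frozen equation, and a short-horizon contraction — or a Schauder fixed-point argument using the uniform Hölder bounds and the Lipschitz continuity of $r\mapsto\sum_k p_k r^k$ on $[0,1]$ — produces the fixed point.)

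Next I would confine the solution to $[0,1]$ by a comparison argument, which simultaneously yields the $[0,1]$-valuedness and shows that $u$ solves the \emph{original} equation. The constant $0$ is a subsolution and $1$ a supersolution: at $r=0$ the Hamiltonian equals $\inf_{a}\gamma(x,a)\,p_0(x,a)\geq0$, while at $r=1$ one has $\sum_k p_k\cdot 1^k-1=0$ and $-c\leq0$, so $\inf_{a}\{\cdots\}\leq0$; together with $0\leq g\leq1$, the comparison principle for the proper uniformly parabolic problem gives $0\leq u\leq1$. On this range $\bar\phi=\phi$, so $u$ solves the HJB equation~\eqref{eq:hjb} with terminal condition $g$, and the Hölder regularity is inherited from Krylov's estimates.

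The main obstacle is the careful verification that Krylov's structural hypotheses hold \emph{uniformly} — uniform parabolicity with the explicit constant $\lambda$, concavity in the Hessian, and boundedness of all coefficients and their derivatives after the modification — intertwined with the construction of a $\bar\phi$ that is at once $\Cd^2$, globally Lipschitz and proper while leaving $\phi$ untouched on $[0,1]$; here the fact that the nonlinearity is a finite polynomial (Assumption~\ref{hyp:smooth}(iii)) and that $c$ is bounded below (Assumption~\ref{hyp:smooth}(iv)) are precisely what make the argument go through. Setting up the comparison principle on the unbounded domain $\R^d$ rather than a bounded one requires the usual localization but is routine given the boundedness of the data.
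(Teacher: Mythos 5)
Your overall strategy---Krylov's theory for fully nonlinear uniformly parabolic equations plus a comparison argument to confine $u$ to $[0,1]$---is the same as the paper's, but your pivotal construction step contains a genuine gap. You require a $\Cd^2$ extension $\bar\phi$ that \emph{coincides with $\phi$ on $[0,1]$} and is \emph{globally nonincreasing in $r$}. Such a $\bar\phi$ does not exist in general, because $\phi$ itself can be strictly increasing in $r$ inside the band: with $p_2\equiv 1$ and constant $\gamma$, $c$, one has $\partial_r\phi(x,a,r)=\gamma(2r-1)-c>0$ for $r$ close to $1$ whenever $\gamma>c$. Your parenthetical justification---that the bound $\sum_k k\,p_k\leq M$ is ``absorbed using $c\geq\underline{c}>0$''---is unfounded: Assumption~\ref{hyp:smooth}(iv) only gives a positive lower bound on $c$, with no relation to $\gammab(M-1)$, so properness cannot be restored while leaving $\phi$ untouched on $[0,1]$. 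No extension outside the band can repair non-monotonicity inside it, and your subsequent appeal to ``the comparison principle for the proper problem'' therefore rests on a hypothesis that fails. (The standard repair---keep $\bar\phi$ merely globally Lipschitz in $r$ and use the exponential change of variables $u=\e{\lambda t}\tilde u$ to obtain properness, as the paper itself does in the proof of Proposition~\ref{prop:comparison}---is available, but it is not what you wrote, and it also affects which version of Krylov's existence theorem you may invoke, since the coefficients become time-dependent.)

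The detour through properness is in fact unnecessary, and this is where the paper is both simpler and sharper. Krylov's Theorem 6.4.4, combined with Example 6.1.8 of~\cite{krylov87}, handles the polynomial zeroth-order term directly: the only structural condition left to verify is the existence of barrier levels, namely $M_0>0$ and $\delta_0>0$ such that
\begin{gather*}
 	\gamma(x,a) \Big(\sum_{k=0}^K {p_k(x,a) M_0^k} - M_0 \Big) - c(x,a) M_0 \leq -\delta_0, \\
 	\gamma(x,a) \Big(\sum_{k=0}^K {p_k(x,a) (-M_0)^k} + M_0 \Big) + c(x,a) M_0 \geq \delta_0,
\end{gather*}
for all $(x,a)$. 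Taking $M_0=1$ both hold with $\delta_0=\underline{c}$ (this is the precise role of Assumption~\ref{hyp:smooth}(iv), which you deployed in the wrong place), and Krylov then delivers $u\in\Cd^{1,2}_b$ with $\|u\|\leq 1$ and H\"older-continuous derivatives in one stroke---no modification of the nonlinearity, no separate upper barrier. Only the lower bound $u\geq 0$ requires a comparison argument, and the paper obtains it from its viscosity comparison principle (Proposition~\ref{prop:comparison}), which needs only the one-sided Lipschitz estimate $H(x,r_2,p,M)-H(x,r_1,p,M)\leq \gammab M (r_2-r_1)$ plus exponential scaling---exactly the mechanism your monotone-extension attempt was trying, and failing, to shortcut. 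Your fallback freezing/fixed-point sketch is plausible in outline (the frozen equation is proper with zeroth-order coefficient $-(\gamma+c)\leq-\underline{c}$, and constants $0$ and $1$ are sub/supersolutions), but it is underdeveloped---the contraction or Schauder step on the unbounded domain, and propagation of the $[0,1]$ bound along the iteration, would need to be carried out---and it is superfluous given the direct route above.
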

 
 \begin{proof}
 	This result is mainly an application of Theorem 6.4.4 in Krylov~\cite{krylov87}, which provides existence of smooth solutions for a class of fully nonlinear PDE. The property that the HJB equation~\eqref{eq:hjb} belongs to this class follows from  Example 6.1.8 in Krylov~\cite{krylov87}. Under Assumption~\ref{hyp:smooth}, the only issue is to find $\delta_0>0$ and $M_0>0$ such that for all $(x,a)\in\R^d\x A$,
 	\begin{gather*}
 		\gamma(x,a) \left(\sum_{k=0}^K {p_k(x,a) M_0^k} - M_0 \right) - c(x,a) M_0 \leq -\delta_0, \\
 		\gamma(x,a) \left(\sum_{k=0}^K {p_k(x,a) (-M_0)^k} + M_0 \right) + c(x,a) M_0 \geq \delta_0,
 	\end{gather*}
 	Taking $M_0=1$, both these inequalities hold with $\delta_0=\underline{c}$. Hence Theorem 6.4.4 in Krylov~\cite{krylov87} ensures that there exists $u\in \Cd^{1,2}_b([0,T]\x\R^d)$ solution to the HJB equation such that $\|u\|\leq 1$. It also ensures that the partial derivatives of $u$ are H\"older continuous. Finally, the fact that $u\geq 0$ is a straightforward consequence of the comparison principle stated in Proposition~\ref{prop:comparison} below.
 \end{proof}

  \subsection{Dynamic programming principle}\label{sec:dpp-smooth}
   
  The next proposition ensures that solution of the HJB equation given in Proposition~\ref{prop:hjb-smooth} satisfies the DPP. Denote by $\Tc_{t,T}$ the collection of all stopping times taking value in $[t,T]$.
  
  \begin{proposition}\label{prop:dpp-smooth}
  	Under Assumption~\ref{hyp:smooth}, let $u$ be as in Proposition~\ref{prop:hjb-smooth}. For all $t\in[0,T]$ and $\mu=\sum_{i\in V}{\delta_{(i,x^i)}}\in\Ed$, it holds:\\
  	\rmi for all $\alpha\in\Ac$ and $\tau\in \Tc_{t,T}$,
  	\begin{equation}\label{eq:PPD_hard}
   		\prod_{i\in V} {u\left(t,x^i\right)} \leq \E\left[\varGamma^{t,\mu,\alpha}_\tau \prod_{i\in V^{t,\mu,\alpha}_{\tau}} {u\left(\tau,X^{i}_{\tau}\right)}\right];
  	\end{equation}
  	\rmii for all $\varepsilon>0$, there exists $\alpha\in\Ac$ such that, for all $\tau\in \Tc_{t,T}$,
  	\begin{equation}\label{eq:PPD_easy}
   		\prod_{i\in V} {u\left(t,x^i\right)} + \varepsilon \geq \E\left[\varGamma^{t,\mu,\alpha}_\tau \prod_{i\in V^{t,\mu,\alpha}_{\tau}} {u\left(\tau,X^{i}_{\tau}\right)}\right].
  	\end{equation}
  \end{proposition}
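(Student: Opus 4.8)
The plan is to derive both inequalities from the martingale identity of Corollary~\ref{cor:dsm} by optional sampling, and then to control the sign, respectively the size, of the resulting drift term. Set $H(\theta,x,a):=\partial_t u(\theta,x)+\Gc^a u(\theta,x)-c^a(x)\,u(\theta,x)$. Since a martingale on $[t,T]$ is closed by its terminal value, the optional sampling theorem applied to the c\`adl\`ag martingale of Corollary~\ref{cor:dsm} at any $\tau\in\Tc_{t,T}$ gives, for every $\alpha\in\Ac$,
\begin{equation}\label{eq:dpp-identity}
  \E\!\left[\varGamma^{t,\mu,\alpha}_\tau \prod_{i\in V^{t,\mu,\alpha}_{\tau}} u(\tau,\X{i}_{\tau})\right] - \prod_{i\in V} u(t,x^i) = \E\!\left[\int_t^\tau \varGamma^{t,\mu,\alpha}_\theta \sum_{i\in V^{t,\mu,\alpha}_\theta} H(\theta,\X{i}_\theta,\alpha^i_\theta) \prod_{j\in V^{t,\mu,\alpha}_\theta\setminus\{i\}} u(\theta,\X{j}_\theta)\,d\theta\right].
\end{equation}
For the first inequality~\eqref{eq:PPD_hard}, the HJB equation of Proposition~\ref{prop:hjb-smooth} gives $H(\theta,x,a)\ge \partial_t u(\theta,x)+\inf_{a'\in A}\bigl(\Gc^{a'}u(\theta,x)-c^{a'}(x)u(\theta,x)\bigr)=0$ for every $a\in A$; since $0\le u\le1$ and $\varGamma^{t,\mu,\alpha}_\theta\ge0$, the integrand on the right-hand side of~\eqref{eq:dpp-identity} is nonnegative, whence~\eqref{eq:PPD_hard} for every $\alpha$ and $\tau$.

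For the second inequality~\eqref{eq:PPD_easy} the goal is to exhibit one control making the right-hand side of~\eqref{eq:dpp-identity} at most $\varepsilon$, uniformly in $\tau$. Since $\inf_{a\in A}H(\theta,x,a)=0$, a measurable selection argument on the Polish space $A$ provides, for each $\varepsilon'>0$, a measurable map $\hat a:[0,T]\x\R^d\to A$ with $H(\theta,x,\hat a(\theta,x))\le\varepsilon'$. The pure feedback $\alpha^i_\theta=\hat a(\theta,\X{i}_\theta)$ would not be admissible, because $\hat a$ is only measurable and the particle dynamics~\eqref{eq:edsi} would then fail to be well-posed. I would instead freeze the feedback on a subdivision $t=t_0<\dots<t_n=T$: each particle $j$ carries, on the interval following the last freezing time $s'$ (a grid point, or its birth time if later), the fixed $\Fc_{s'}$-measurable control $\hat a(s',\X{j}_{s'})$. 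On every such interval the control of each particle is a constant element of $A$, so~\eqref{eq:edsi} has Lipschitz coefficients by Assumption~\ref{hyp:pop}~(i); the construction of Section~\ref{sec:strongbd} then yields a genuine $\alpha\in\Ac$, piecewise constant with finitely many switches before $T$ since there is no explosion.

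It remains to bound the integrand of~\eqref{eq:dpp-identity} for this control. Writing $s'=s'(\theta)$ for the freezing time active at $\theta$, one has
\[
  H\big(\theta,\X{j}_\theta,\hat a(s',\X{j}_{s'})\big) \le H\big(s',\X{j}_{s'},\hat a(s',\X{j}_{s'})\big) + \omega\big(|\theta-s'|+|\X{j}_\theta-\X{j}_{s'}|\big) \le \varepsilon' + \omega\big(|\theta-s'|+|\X{j}_\theta-\X{j}_{s'}|\big),
\]
where $\omega$ is a modulus of continuity for $H$ in $(\theta,x)$, uniform in $a$; such a modulus exists in the smooth case because the sum in $\Gc^a$ is finite (Assumption~\ref{hyp:smooth}~(iii)), $\gamma,p_k,c$ are uniformly continuous in $x$ uniformly in $a$ (Assumption~\ref{hyp:main}), $b,\sigma$ are Lipschitz (Assumption~\ref{hyp:pop}~(i)), and $u$ together with $\partial_t u,D_x u,D^2_x u$ is bounded and uniformly continuous. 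Inserting this into~\eqref{eq:dpp-identity}, using $0\le\varGamma\le1$ and $0\le u\le1$ and bounding the number of summands by $\n{t,\mu,\alpha}_\theta$, the right-hand side is at most $\E[\int_t^T \n{t,\mu,\alpha}_\theta(\varepsilon'+\omega(\dots))\,d\theta]$. The moment estimate~\eqref{eq:moment} bounds the $\varepsilon'$ term, while the $\omega$ term vanishes as the mesh shrinks, by the standard modulus-of-continuity estimate for~\eqref{eq:edsi} combined with~\eqref{eq:moment}; choosing $\varepsilon'$ small and then the grid fine yields~\eqref{eq:PPD_easy}. The main obstacle is exactly this step: the HJB equation only furnishes an $\varepsilon'$-optimal \emph{measurable} feedback, which a priori breaks well-posedness of the controlled branching diffusion, so the time-freezing device together with the uniform-in-$a$ continuity estimates---where Assumption~\ref{hyp:main} and the finiteness of the progeny are essential---carries the real weight of the argument.
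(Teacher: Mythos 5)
Your proposal is correct and follows the same architecture as the paper's proof: part (i) is exactly the paper's argument (optional sampling applied to the martingale of Corollary~\ref{cor:dsm}, then the sign of the HJB drift together with $u\geq 0$ and $\varGamma\geq 0$), and part (ii) is the same near-optimal construction via a control frozen on a time grid, controlled by a uniform-in-$a$ modulus of continuity for $(s,y)\mapsto(\partial_t u + \Gc^a u - c^a u)(s,y)$. The one real difference is how the $\varepsilon'$-minimizers are produced: you invoke a measurable selection theorem to obtain a feedback $\hat a(\theta,x)$ and then freeze it in time only, whereas the paper discretizes space as well --- it partitions $\R^d$ into Borel cells $(B_m)$ of diameter at most $\delta/2$, picks a single near-minimizer $a_{n,m}\in A$ per time node and cell, and sets $\alpha^i_s = a_{n,m}$ on $(s_n,s_{n+1}]$ when $\X{i}_{s_n}\in B_m$, extending each particle's trajectory before its birth by its ancestor's path instead of re-freezing at birth times. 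The paper's device avoids selection theorems entirely, which matters here because the coefficients are only assumed measurable in $a$: a selector is then at best universally measurable (Jankov--von Neumann/Bertsekas--Shreve), and you need the completed filtration to make $\hat a(s',\X{j}_{s'})$ an admissible control; your route can be made rigorous but carries this extra load for no gain, since the uniform modulus in $(t,x)$ is precisely what lets the countable space-time grid substitute for selection. One step you should make explicit rather than calling standard: the final bound $\E[\int_t^T \n{t,\mu,\alpha}_\theta(\varepsilon'+\omega(\cdot))\,d\theta]$ cannot be closed by \eqref{eq:moment} alone, because the population size and the particle increments are correlated. The paper decouples them by working on the event $F_\delta$ on which every mesh increment is at most $\delta/2$ (so the drift is at most $\varepsilon$ there) and bounding $\P(\Om\setminus F_\delta)$ separately, using the finite-progeny hypothesis (Assumption~\ref{hyp:smooth}(iii)) to count the possible labels on $\{S_k>T\}$, the tail of the $k$-th potential jump time $S_k$, and the per-particle estimate $\P\bigl(\sup_{s_n< s\leq s_{n+1}}|\X{t,x,\alpha}_s-\X{t,x,\alpha}_{s_n}|>\delta/2\bigr)\leq C'(s_{n+1}-s_n)^2/\delta^4$ summed over nodes and labels; inserting this splitting closes your argument.
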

  
  \begin{corollary}\label{cor:hjb-smooth}
  	Under Assumption~\ref{hyp:smooth}, the map $u$ given in Proposition~\ref{prop:hjb-smooth} coincides with the value function $v$ and the branching property~\eqref{eq:branching} is satisfied. In particular, the value function is a classical solution to the HJB equation~\eqref{eq:hjb} and satisfies the DPP~\eqref{eq:PPD_hard}--\eqref{eq:PPD_easy}. 
  \end{corollary}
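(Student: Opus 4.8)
The plan is to deduce the identification $u = v$ directly from the two dynamic programming inequalities of Proposition~\ref{prop:dpp-smooth} by specializing them to the deterministic stopping time $\tau \equiv T$ and invoking the terminal condition $u(T,\cdot) = g$ from Proposition~\ref{prop:hjb-smooth}.

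First I would fix $t\in[0,T]$ and $\mu=\sum_{i\in V}\delta_{(i,x^i)}\in\Ed$, and take $\tau = T\in\Tc_{t,T}$ in~\eqref{eq:PPD_hard}--\eqref{eq:PPD_easy}. Since $u(T,\X{i}_T) = g(\X{i}_T)$, the right-hand side of both inequalities reduces to $\E\big[\varGamma^{t,\mu,\alpha}_T \prod_{i\in V^{t,\mu,\alpha}_T} g(\X{i}_T)\big] = \bar{J}(t,\mu,\alpha)$. Consequently~\eqref{eq:PPD_hard} reads $\prod_{i\in V} u(t,x^i) \leq \bar{J}(t,\mu,\alpha)$ for every $\alpha\in\Ac$, and taking the infimum over $\alpha$ yields $\prod_{i\in V} u(t,x^i) \leq \bar{v}(t,\mu)$.

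For the reverse inequality I would fix $\varepsilon>0$ and let $\alpha$ be the control furnished by~\eqref{eq:PPD_easy}, so that $\prod_{i\in V} u(t,x^i) + \varepsilon \geq \bar{J}(t,\mu,\alpha) \geq \bar{v}(t,\mu)$; sending $\varepsilon\to 0$ gives $\prod_{i\in V} u(t,x^i) \geq \bar{v}(t,\mu)$. Combining the two bounds produces the identity $\bar{v}(t,\mu) = \prod_{i\in V} u(t,x^i)$ for every $(t,\mu)$.

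Finally, specializing to $\mu=\delta_{(\emptyset,x)}$ gives $v(t,x) = \bar{v}(t,\delta_{(\emptyset,x)}) = u(t,x)$, so $u$ coincides with the value function $v$; substituting $u=v$ back into the identity recovers the branching property~\eqref{eq:branching}. The remaining claims are then immediate, since $u=v$ solves the HJB equation classically by Proposition~\ref{prop:hjb-smooth} and satisfies~\eqref{eq:PPD_hard}--\eqref{eq:PPD_easy} by Proposition~\ref{prop:dpp-smooth}. There is essentially no analytic obstacle here; the one point deserving care is that Proposition~\ref{prop:dpp-smooth} holds for \emph{all} $\tau\in\Tc_{t,T}$, which is precisely what legitimizes the choice $\tau\equiv T$ and the resulting reduction of both right-hand sides to the cost functional $\bar{J}$.
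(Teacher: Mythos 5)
Your proposal is correct and follows exactly the paper's own argument: apply Proposition~\ref{prop:dpp-smooth} with $\tau\equiv T$, use the terminal condition $u(T,\cdot)=g$ to reduce both sides to the cost functional $\bar{J}$, obtain $\bar{v}(t,\mu)=\prod_{i\in V}u(t,x^i)$, and specialize to $\mu=\delta_{(\emptyset,x)}$ to identify $u=v$ and recover the branching property. The only difference is that you spell out the two one-sided inequalities and the $\varepsilon$-argument, which the paper leaves implicit.
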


  \begin{proof}
    By applying Proposition~\ref{prop:dpp-smooth} with $\tau\equiv T$, we deduce that
    \begin{equation*}
    	\bar{v}(t,\mu)=\prod_{i\in V} {u\left(t,x^i\right)}.
    \end{equation*}
    Taking $\mu=\delta_{(\emptyset,x)}$, we conclude that $u=v$ and so the identity above turns out to be the branching property~\eqref{eq:branching}.
  \end{proof}
  
  \begin{proof}[Proof of Proposition~\ref{prop:dpp-smooth}]
  	Once again, we omit the indices $(t,\mu,\alpha)$ in the notations. Let us start by proving (i). Applying Corollary~\ref{cor:dsm}, we obtain
  	\begin{multline*}
	   \E\left[\varGamma_\tau \prod_{i\in V_{\tau}} {u\left(\tau, \X{i}_{\tau}\right)}\right] = \prod_{i\in V} {u\left(t,x^i\right)} \\
   		+ \E\left[\int_t^{\tau} {\left(\varGamma_s \sum_{i\in V_s} { \left(\partial_{t} {u} + \Gc^{\alpha^i_s} {u} - c^{\alpha_s^i} u \right) \left(s,\X{i}_s\right) \prod_{j\in V_s \setminus \{i\}} {u\left(s,\X{j}_s\right)}}\right)} \,ds\right].
  	\end{multline*}
  	Since $u$ is a nonnegative solution to the HJB equation, we deduce that (i) is satisfied. Let us turn now to the proof of (ii). The idea is to construct a near optimal control. Fix $\varepsilon>0$.  In view of Proposition~\ref{prop:hjb-smooth}, the map $u$ and its partial derivatives are uniformly continuous in $[0,T]\x\R^d$. Hence, there exists $\delta>0$ such that for all $|s-s'|\leq\delta$, $|y-y'|\leq\delta$ and $a\in A$,
  	\begin{equation*}
  		\left|(\partial_t u + \Gc^a {u} - c^a u)(s,y) - (\partial_t u + \Gc^a {u} - c^a u)(s',y')\right| \leq \frac{\varepsilon}{2}.
	\end{equation*}
	Let $(B_m)_{m\in\N}$ be a partition of $\R^d$ in Borel sets of diameter less than $\frac{\delta}{2}$. We choose an element $y_m$ in each $B_m$. Similarly, let $t=s_0<s_1<\cdots<s_N=T$ be a subdivision of $[0,T]$ such that $s_{n+1}-s_n=\frac{T-t}{N}\leq \delta$. Then, for each $(n,m)$, we take $a_{n,m}\in A$ such that 
  	\begin{equation*}
  		(\partial_t u + \Gc^{a_{n,m}} {u} - c^{a_{n,m}} u)(s_n,y_m) \leq \frac{\varepsilon}{2}.
	\end{equation*}	
  	Hence, it holds for all $s\in[s_n,s_{n+1}]$ and $|y-y_m|\leq \delta$,
  	\begin{equation}\label{eq:proof-dpp-smooth}
  		(\partial_t u + \Gc^{a_{n,m}} {u} - c^{a_{n,m}} u)(s,y) \leq \varepsilon.
	\end{equation}
	We define a near optimal control process as follows:
	\begin{equation*}
		\alpha_s^i(\om) = a_{n,m}, \qquad \text{if }s\in(s_n,s_{n+1}],\ \X{i}_{s_n}\in B_m,
	\end{equation*}
 	where we extend the trajectory of the particle $i$ before its birth by the trajectory of its ancestors, \ie, we set $\X{i}_{s} := \X{j}_s$ whenever $i\succeq j\in V_{s}$.
	Applying Corollary~\ref{cor:dsm}, we get
	\begin{multline*}\label{eq:dpp-proof}
	   \E\left[\varGamma_\tau \prod_{i\in V_{\tau}} {u\left(\tau, \X{i}_{\tau}\right)}\right] = \prod_{i\in V} {u\left(t,x^i\right)}  \\
   		+ \E\left[\int_t^{\tau} {\left(\varGamma_s \sum_{i\in V_s} { \left(\partial_{t} {u} + \Gc^{\alpha^i_s} {u} - c^{\alpha_s^i} u \right) \left(s,\X{i}_s\right) \prod_{j\in V_s \setminus \{i\}} {u\left(s,\X{j}_s\right)}}\right)} \,ds\right].
  	\end{multline*}
  	It remains to show that the second term on the r.h.s. is bounded from above by a quantity that can be made arbitrary small. Denote
  	\begin{equation*}
  		F_\delta:=\left\{\left|\X{i}_s-\X{i}_{s_n}\right|\leq\frac{\delta}{2}, i\in V_s, s\in(s_n,s_{n+1}], n=0,\ldots,N-1\right\}.
  	\end{equation*}
  	By~\eqref{eq:proof-dpp-smooth}, it holds
  	\begin{multline*}
  		\E\left[\int_t^{\tau} {\varGamma_s \left(\sum_{i\in V_s} { \left(\partial_{t} {u} + \Gc^{\alpha^i_s} {u} - c^{\alpha_s^i} u \right) \left(s,\X{i}_s\right) \prod_{j\in V_s \setminus \{i\}} {u\left(s,\X{j}_s\right)}}\right)} \,ds\right] \\
  		\leq C \left(\varepsilon  + \sup_{a\in A} \left\{\|\partial_{t} {u} + \Gc^{a} {u} - c^{a} u \|\right\} \P\left(\Om\setminus F_\delta\right)\right),
  	\end{multline*}
  	where $C=(T-t)e^{\gammab M (T-t)}$. It remains to evaluate $\P\left(\Om\setminus F_\delta\right)$:
  	\begin{align*}
  		\P\left(\Om\setminus F_\delta\right) 
  			& = \P\left(\sup_{0\leq n\leq N-1} \sup_{s_n < s\leq s_{n+1}} \sup_{i\in V_s} {\left\{\left|\X{i}_s-\X{i}_{s_n}\right|\right\}} > \frac{\delta}{2}\right)\\
  			& \leq \P\left(\sup_{0\leq n\leq N-1} \sup_{s_n < s\leq s_{n+1}} \sup_{i\in V_s} {\left\{\left|\X{i}_s-\X{i}_{s_n}\right|\right\}} > \frac{\delta}{2}, S_{k} > T\right) + \P\left(S_k\leq T\right),
  	\end{align*}
  	where $S_k$ is the $k$-th potential jumping time defined in Section~\ref{sec:strongbd}. In view of Assumption~\ref{hyp:smooth} (iii), 
  	it holds for all $s\in [t,T]$,
  	\begin{equation*}
  		V_s\subset V\cup\Big\{i i_1\ldots i_l;\ i\in V,\ 0\leq i_1,\ldots,i_l\leq K-1,\ 1\leq l\leq k - 1 \Big\}.
 	\end{equation*}
 	Since the cardinal of the set on the r.h.s. is $C_k=|V|\sum_{l=0}^{k-1} { K^l}$, we deduce that 
  	\begin{multline*}
  		\P\left(\sup_{0\leq n\leq N-1} \sup_{s_n \leq s\leq s_{n+1}} \sup_{i\in V_s} {\left\{\left|\X{i}_s-\X{i}_{s_n}\right|\right\}} > \frac{\delta}{2}, S_{k} > T\right) \\
  		\leq N C_k \sup_{0\leq n\leq N-1} \sup_{i\in V} \sup_{\alpha\in\Ac} {\P\left(\sup_{s_n < s\leq s_{n+1}} {\left\{\left|\X{t,x^i,\alpha}_s-\X{t,x^i,\alpha}_{s_n}\right|\right\}} > \frac{\delta}{2}\right)},
  	\end{multline*}
  	where $\X{t,x,\alpha}$ is the solution of
  	\begin{equation*}
  		\X{t,x,\alpha}_s = x + \int_t^s {b\left(\X{t,x,\alpha}_\theta,\alpha^{\emptyset}_\theta\right) d\theta} + \int_t^s {\sigma\left(\X{t,x,\alpha}_\theta,\alpha^{\emptyset}_\theta\right) dB^{\emptyset}_\theta}, \quad s\geq t.
  	\end{equation*}
  	Under Assumption~\ref{hyp:pop}, it follows by classical arguments from the theory of (controlled) diffusions that
    \begin{equation*}
		\P\left(\sup_{s_n < s\leq s_{n+1}} {\left\{\left|\X{t,x,\alpha}_s-\X{t,x,\alpha}_{s_n}\right|\right\}} > \frac{\delta}{2}\right) \leq \frac{C' (s_{n+1} -s_n)^2}{\delta^4} = \frac{C' (T -t)^2}{N^2 \delta^4},
  	\end{equation*}
  	where the constant $C'$ does not depend on $x$ and $\alpha$. Hence, we deduce that 
  	\begin{equation*}
  		\P\left(\Om\setminus F_\delta\right) \leq \frac{C'_k}{N \delta^4} + \P\left(S_k\leq T\right),
  	\end{equation*}
  	where $C'_k =  C_k C' (T-t)^2$. In addition, $S_k-t$ is bounded from below by the sum of $k$ independent exponentially distributed random variables with parameters $(\gammab(|V| + l K))_{0\leq l\leq k-1}$. It follows that $\P\left(S_k\leq T\right)$ converges to $0$, uniformly w.r.t. $\alpha$, as $k$ goes to $+\infty$. Hence, $\P\left(\Om\setminus F_\delta\right)$ vanishes, uniformly w.r.t. $\alpha$, as $N$ tends to $\infty$. This ends the proof.
  \end{proof}
  
  \begin{remark}\label{rmk:optimal}
   If we assume further that the parameters are continuous in $a$ and that there exists a solution to 
   \begin{equation*}
    d X_s = b\left(X_s,\hat{\alpha}(s,X_s)\right) ds + \sigma\left(X_s,\hat{\alpha}(s,X_s)\right) d B_s,
   \end{equation*}
   where $\hat{\alpha}:[0,T]\x\R^d\to A$ is given by
   \begin{equation*}
      \Gc^{\hat{\alpha}(s,x)} {v}\left(s,x\right) - c^{\hat{\alpha}(s,x)}(x) v(s,x) = \inf_{a\in A} {\left\{ \Gc^a {v}\left(s,x\right) - c^a(x) v(s,x) \right\}}.
   \end{equation*}
   Then we can show by a classical verification argument that an optimal control consists in applying at any time $s$ the control $\hat{\alpha}(s,\X{i}_s)$ to each particle $i$. We refer the reader to Fleming and Soner~\cite{fleming06} for conditions to ensure the existence of a (weak) solution to the SDE above.
  \end{remark}

 \section{Proof of Theorems~\ref{th:main}}\label{sec:hjb}
 
 The aim of this section is to show that the value function is the unique viscosity solution of the corresponding HJB equation. First, we derive the uniqueness property from a strong comparison principle stated in Section~\ref{sec:hjbuniq}. Then, we prove in Section~\ref{sec:hjbexist} that the value function satisfies~\eqref{eq:hjb} in the viscosity sense by approximation with smooth value functions corresponding to small perturbations of the initial problem. In Section~\ref{sec:dpp}, we derive the DPP by using the same approximation procedure.

 \subsection{Comparison principle}\label{sec:hjbuniq}
 
  In this section, we give a strong comparison principle for the HJB equation. 
 To the best of our knowledge, the comparison principle for a parabolic PDE such as~\eqref{eq:hjb} appears solely in~\cite{zhan99}. However, for the sake of completeness, we provide another proof, which is based on an extension of the arguments in~\cite{pham09}.

 \begin{proposition}\label{prop:comparison}
  Let $u_1$ and $u_2$ be respectively viscosity subsolution and supersolution valued in $[-1,1]$ of the HJB equation~\eqref{eq:hjb}. If $u_1(T,\cdot)\leq u_2(T,\cdot)$ in $\R^d$, then $u_1\leq u_2$ in $[0,T]\x\R^d$. In particular, there exists at most one viscosity solution valued in $[-1,1]$ to the HJB equation~\eqref{eq:hjb}.
 \end{proposition}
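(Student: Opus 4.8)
The plan is to argue by contradiction, following the doubling-of-variables scheme of~\cite{pham09}, the only genuinely new point being the treatment of the zeroth-order nonlinearity $\gamma(x,a)\big(\sum_k p_k(x,a) r^k - r\big)$ appearing in $\Gc^a$. This equation fails to be proper (monotone in the $r$-variable), since $r\mapsto \sum_k p_k(x,a) r^k$ is increasing; however, on the range $[-1,1]$ it is Lipschitz. Indeed, for $r,s\in[-1,1]$ with $r\ge s$,
\[ 0\le \sum_{k\ge0} p_k(x,a)\big(r^k-s^k\big) \le (r-s)\sum_{k\ge0} k\, p_k(x,a) \le M\,(r-s), \]
by Assumption~\ref{hyp:pop}(iii), while the remaining part $-(\gamma+c)(x,a)\,r$ is nonincreasing in $r$ because $\gamma,c\ge0$. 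Hence the full zeroth-order term $h^a(x,r):=\gamma(x,a)\big(\sum_k p_k(x,a) r^k - r\big) - c(x,a)\,r$ satisfies $h^a(x,r)-h^a(x,s)\le \gammab M\,(r-s)$ for $r\ge s$, with a constant $\gammab M$ independent of $x,a$ and of the (possibly unbounded) cost $c$. This single estimate is what allows the finite-horizon comparison to go through despite the lack of properness.

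Assume for contradiction that $\sup_{[0,T]\x\R^d}(u_1-u_2)>0$. Since $u_1,u_2$ are bounded, I would treat the unbounded domain with a small localization penalty and the lack of properness by introducing the time weight $e^{\gammab M(T-t)}$: this factor equals $1$ at $t=T$, is $\ge1$ elsewhere, and, once carried through the test functions, produces an extra strictly dissipative term $-\gammab M\,(u_1-u_2)$ that compensates the Lipschitz bound above over $[0,T]$. Doubling the space variable, I maximize
\[ \Phi(t,x,y)=e^{\gammab M(T-t)}\big(u_1(t,x)-u_2(t,y)\big)-\frac{|x-y|^2}{2\varepsilon}-\beta\big((1+|x|^2)^{1/2}+(1+|y|^2)^{1/2}\big) \]
over $[0,T]\x\R^d\x\R^d$. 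Because $u_1(T,\cdot)\le u_2(T,\cdot)$, the weighted difference is nonpositive at $t=T$, so for $\beta$ small the (positive) maximum is attained at some $(\hat t,\hat x,\hat y)$ with $\hat t<T$; standard penalization estimates give $|\hat x-\hat y|^2/\varepsilon\to0$ as $\varepsilon\to0$, and Ishii's lemma then furnishes second-order jets with symmetric matrices $X,Y$ obeying the usual $3\varepsilon^{-1}$ bound.

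Writing the subsolution inequality for $u_1$ at $(\hat t,\hat x)$ and the supersolution inequality for $u_2$ at $(\hat t,\hat y)$, subtracting, and using $\inf_a A_a-\inf_a B_a\le\sup_a(A_a-B_a)$ to evaluate both at a common near-optimal control $a$, I would sort the resulting terms into three groups. The second-order terms $\tfrac12\mathrm{tr}\big(\sigma\sigma^*(\hat x,a)X\big)-\tfrac12\mathrm{tr}\big(\sigma\sigma^*(\hat y,a)Y\big)$ are bounded by $C\,|\hat x-\hat y|^2/\varepsilon$ using the matrix inequality and the Lipschitz continuity of $\sigma$ (Assumption~\ref{hyp:pop}(i)); the first-order terms are bounded by $K\,|\hat x-\hat y|^2/\varepsilon$ using the Lipschitz continuity of $b$; both vanish as $\varepsilon\to0$. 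For the zeroth-order contribution I would split $h^a(\hat x,u_1(\hat t,\hat x))-h^a(\hat y,u_2(\hat t,\hat y))$ into an $x$-increment $h^a(\hat x,\cdot)-h^a(\hat y,\cdot)$, controlled by a modulus of continuity in $|\hat x-\hat y|$ (using the uniform continuity of $\gamma,c,(p_k)_k$ from Assumption~\ref{hyp:main}, together with the tail bound $\sum_{k>K}p_k\le M/K$ and the $2^{-k}$-weighting exactly as in Lemma~\ref{lem:inter}), and a $u$-increment bounded by $\gammab M\,(u_1-u_2)$ via the estimate of the first paragraph. The latter is precisely cancelled by the dissipative term $-\gammab M\,(u_1-u_2)$ produced by the time weight, leaving a residue coming from $-(\gamma+c)(u_1-u_2)\le0$ and the weight. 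Sending $\varepsilon\to0$ and then $\beta\to0$, every error vanishes while the weighted gap stays bounded below by a positive multiple of the supremum, contradicting the combined inequality. Hence $u_1\le u_2$, and uniqueness follows at once.

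The main obstacle is exactly this non-proper zeroth-order nonlinearity: unlike the linear discount $-c^a u$ of the classical diffusion HJB equation, the branching term $\gamma\sum_k p_k u^k$ is increasing in $u$ and a priori obstructs the maximum principle. Everything hinges on the bound $\sum_k k\,p_k(x,a)\le M$ on $[-1,1]$, which keeps the anti-dissipative constant finite and independent of $c$ and lets the finite-horizon exponential weight restore comparison; the convergence of the infinite progeny sum in the $x$-continuity estimate is the remaining technical point, handled by the same tail and $2^{-k}$-weighting device as in Lemma~\ref{lem:inter}.
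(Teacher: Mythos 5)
Your overall strategy is exactly the paper's: doubling of variables with Ishii's lemma, a localization penalty to handle the unbounded domain, an exponential-in-time factor to restore properness of the non-proper branching nonlinearity, and a modulus-of-continuity estimate for the $x$-dependence of the Hamiltonian with tail truncation of the progeny sum (the paper's Lemma~\ref{lem:moduluscont}). But the crucial step fails as written: the weight $e^{\gammab M(T-t)}$ has the \emph{wrong monotonicity in time} for this terminal-value problem. Setting $w_i(t,x):=e^{\gammab M(T-t)}u_i(t,x)$, one has $\partial_t u_i = e^{-\gammab M(T-t)}\left(\partial_t w_i + \gammab M\, w_i\right)$, so carrying the weight through the sub/supersolution inequalities produces the zeroth-order term $+\gammab M\,(w_1-w_2)$ — an \emph{anti}-dissipative contribution that adds to, rather than cancels, the one-sided Lipschitz bound $\gammab M (r-s)$ of the branching term. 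The combined inequality at the doubled maximum then reads $0\leq 2\gammab M\,(w_1-w_2)(\hat t,\hat x,\hat y)+o(1)$, which is vacuously true at a positive maximum and yields no contradiction. (Smooth-case sanity check: at a positive maximum of $e^{\gammab M(T-t)}(u_1-u_2)$ with $\hat t<T$, one only gets $\partial_t(u_1-u_2)\leq \gammab M\,(u_1-u_2)$ — the unhelpful sign.) For the backward equation the correct substitution is $e^{\lambda t}u_i$, equivalently $e^{-\lambda(T-t)}u_i$, which is what the paper's first step does; your choice $e^{+\gammab M(T-t)}$ is the weight adapted to an \emph{initial}-value problem.

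Moreover, even after correcting the sign, your exponent equals the one-sided Lipschitz constant $\gammab M$ exactly, so the dissipative term is, as you say, ``precisely cancelled'' — and then nothing is left to contradict a positive supremum: the residue $-(\gamma+c)(x,a)(u_1-u_2)$ is only nonpositive, and under Assumptions~\ref{hyp:pop}--\ref{hyp:main} nothing bounds $\gamma+c$ away from zero (the lower bound $c\geq\underline{c}>0$ is Assumption~\ref{hyp:smooth}(iv), available only in the smooth case, while $\gamma$ may vanish). Exact cancellation gives $0\leq o(1)$, not a contradiction; you need strict properness, \ie\ an exponent $\lambda>\gammab M$ — the paper takes $\lambda=K+1$ with $K=\gammab M$, producing the strict inequality~\eqref{eq:comp} from which the contradiction follows. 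Both defects are one-line fixes (use $e^{(\gammab M+1)t}$), but as stated the proof does not close. A cosmetic point besides: your claimed lower bound $0\leq\sum_k p_k(x,a)(r^k-s^k)$ for $r\geq s$ is false on $[-1,1]$ (take $p_2=1$, $s=-1<r=0$); fortunately only the upper bound $r^k-s^k\leq k(r-s)$ is used.
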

 
 \begin{proof} 
  \textsl{First step.} Let $H:\R^d\x\R\x\R^d\x\R^{d\x d}\to \R$ be the Hamiltonian of the HJB equation, \ie,
  \begin{equation}\label{eq:hamiltonian}
      H(x,r,p,M) := \inf_{a\in A} \left\{b(x,a)\cdot p + \frac{1}{2}\mathrm{tr}\left(\sigma\sigma^*(x,a) M\right) + G^a\left(x,r\right) - c(x,a) r\right\},
  \end{equation}
  where $G^a:\R^d \x\R\to \R$ is given by
  \begin{equation*}
   G^a\left(x,r\right) := \gamma(x,a) \left(\sum_{k\geq0} {p_k(x,a)r^k} - r\right), \quad \forall r\in[-1,1],
  \end{equation*}
  and $G^a(x,r)=G^a(x,1)$ for all $r\geq 1$, $G^a(x,r)=G^a(x,-1)$ for all $r\leq -1$.
  Let us show first that there exists $K>0$ such that for all $x\in\R^d$, $p\in\R^d$, $M\in\R^{d\x d}$ and $r_1\leq r_2$,
  \begin{equation*}
   H(x,r_2,p,M) - H(x,r_1,p,M) \leq K (r_2 - r_1).
  \end{equation*}
  We start by observing that for all $x\in\R^d$, $a\in A$ and $r_1,r_2\in[-1,1]$, 
  \begin{equation}\label{eq:G}
  	\left|\sum_{k\geq 0} {p_k(x,a) \left(r_2^k -  r_1^k\right)}\right| = \left|r_2-r_1\right|\left|\sum_{k\geq 0} {p_k(x,a) \sum_{l=0}^{k-1} r_1^l r_2^{k-l-1}}\right| \leq M \left|r_2-r_1\right|,
  \end{equation}
  where the constant $M$ comes from the point (iii) of Assumption~\ref{hyp:pop}. Since $c$ and $\gamma$ are nonnegative, it follows that for all $r_1\leq r_2$, 
  \begin{equation*}
   H(x,r_2,p,M) - H(x,r_1,p,M)
    \leq \sup_{a\in A} {\left\{G^a\left(x,r_2\right) - G^a\left(x,r_1\right)\right\}}
    \leq K (r_2 - r_1),
  \end{equation*}
  where $K=\gammab M$. 
  Now let $\ut_1:=\e{\lambda t} u_1$ and $\ut_2:=\e{\lambda t} u_2$ with $\lambda=K+1$. One easily checks that $\ut_1$ and $\ut_2$ are respectively viscosity subsolution and supersolution of
  \begin{equation}\label{eq:hjbcomp}
   \partial_t u(t,x) + \Ht\left(t,x,u(t,x),D_x{u}(t,x), D^2_x{u}(t,x)\right) = 0,\quad \forall (t,x)\in[0,T)\x\R^d,
  \end{equation}
  where $\Ht:[0,T]\x\R^d\x\R\x\R^d\x\R^{d\x d}\to\R$ is given by
  \begin{equation*}
   \Ht(t,x,r,p,M) := -\lambda r + \e{\lambda t} H\left(x,\e{-\lambda t} r, \e{-\lambda t} p, \e{-\lambda t} M\right).
  \end{equation*}
  From the calculation above, it follows that for all $r\leq s$,
  \begin{equation}\label{eq:comp}
   \Ht(t,x,s,p,M) - \Ht(t,x,r,p,M) \leq - \left(s - r\right).
  \end{equation}
  In the rest of the proof, we are going to prove the comparison principle for~\eqref{eq:hjbcomp}, \ie, $\ut_1\leq \ut_2$, which clearly yields $u_1\leq u_2$.
  
  \noindent \textsl{Second step.} Now we prove that we can always suppose that $\ut_1-\ut_2$ reaches its maximum in a compact subset of $[0,T]\x\R^d$. Indeed, if it is not the case,  we can replace $\ut_2$ by $\ut_2^{\varepsilon}:=\ut_2 + \varepsilon\phi$ with $\varepsilon>0$ and $\phi(t,x):=\e{-\rho t}\left(1+|x|^2\right)$. Let us show that $\ut_2^{\varepsilon}$ is a viscosity supersolution of~\eqref{eq:hjbcomp} if $\rho$ is sufficiently large. 
  
  Fix $(t,x)\in [0,T)\x\R^d$ and let $\psi\in\Cd^{1,2}([0,T]\x\R^d)$ be such that $(t,x)$ is a minimum point of $\ut^\varepsilon_2-\psi$ and $\ut^\varepsilon_2(t,x)=\psi(t,x)$. We want to prove that, for $\rho$ sufficiently large, 
  \begin{equation*}
   \partial_t \psi (t,x) + \Ht\left(t,x,\psi(t,x),D_x{\psi}(t,x), D^2_x{\psi}(t,x)\right) \leq 0.
  \end{equation*}
  First, since $\ut_2$ is a viscosity supersolution of~\eqref{eq:hjbcomp}, one has
  \begin{equation*}
   \partial_t \psi^{\varepsilon}(t,x) + \Ht\left(t,x,\psi^{\varepsilon}(t,x),D_x{\psi^{\varepsilon}}(t,x), D^2_x{\psi^{\varepsilon}}(t,x)\right) \leq 0,
  \end{equation*}
  with $\psi^{\varepsilon}:=\psi-\varepsilon\phi$. Further, it follows from~\eqref{eq:comp} that 
  \begin{multline*}
   \Ht\left(t,x,\psi(t,x),D_x{\psi}(t,x), D^2_x{\psi}(t,x)\right) - \Ht\left(t,x,\psi^{\varepsilon}(t,x),D_x{\psi^{\varepsilon}}(t,x), D^2_x{\psi^{\varepsilon}}(t,x)\right)\\
   \begin{aligned}
    & \leq \varepsilon\left(- \phi + \sup_{a\in A} {\left\{b(x,a)\cdot D_x{\phi}(t,x) + \frac{1}{2}\mathrm{tr}\left(\sigma\sigma^*(x,a) D^2_x{\phi}(t,x)\right) \right\}}\right) \\
    & \leq \left(C - 1\right) \varepsilon \phi,
   \end{aligned}
  \end{multline*}
  for some constant $C>0$. We deduce that
  \begin{equation*}
   \partial_t \psi(t,x) + \Ht\left(t,x,\psi(t,x),D_x{\psi}(t,x), D^2_x{\psi}(t,x)\right) \leq \left(C - 1 - \rho\right) \varepsilon \phi.
  \end{equation*} 
  Hence, if $\rho\geq C-1$, $\ut^\varepsilon_2$ is a viscosity supersolution of~\eqref{eq:hjbcomp}.
  
  \noindent\textsl{Third step.} To conclude, we argue by contradiction to show that $\ut_1\leq \ut^{\varepsilon}_2$, which gives the desired result by sending $\varepsilon$ to zero. Assume that 
  \begin{equation*}
   M := \sup_{[0,T]\x\R^d} {\left\{\ut_1-\ut^\varepsilon_2\right\}} > 0.
  \end{equation*}
  Since $\ut_1(T,\cdot)\leq\ut^{\varepsilon}_2(T,\cdot)$ and $\lim_{|x|\to+\infty} {\sup_{[0,T]} {\left\{\ut_1(\cdot,x)-\ut^\varepsilon_2(\cdot,x)\right\}}}=-\infty$, there exists an open bounded set $\Oc$ of $\R^d$ such that the supremum above is attained in $[0,T)\x \Oc$ and $\sup_{[0,T]\x\partial\Oc} {\{\ut_1 - \ut^\varepsilon_2\}}<M$. Now we use the classical dedoubling variable technique. Consider, for any $\delta>0$, the function 
  \begin{equation*}
   \phi_{\delta}(t,s,x,y) := \frac{1}{\delta} \left(\left|t-s\right|^2 + \left|x-y\right|^2\right),
  \end{equation*}
  and denote
  \begin{equation*}
   M_{\delta} := \max_{[0,T]^2\x\bar{\Oc}^2} {\left\{\ut_1(t,x) - \ut^{\varepsilon}_2(s,y) - \phi_{\delta}(t,s,x,y)\right\}}.
  \end{equation*}
  Let $(t_\delta,s_\delta,x_\delta,y_\delta)$ be an argument of the maximum above. It is well known (see, \eg,~\cite[Lem.3.1]{crandall92}) that
  \begin{equation*}
   \lim_{\delta\to 0} {M_{\delta}} = M ~~ \text{and} ~~ \lim_{\delta\to 0} {\phi_{\delta}\left(t_\delta,s_\delta,x_\delta,y_\delta\right)} = 0. 
  \end{equation*}
  In particular, it follows that $(t_\delta,s_\delta,x_\delta,y_\delta)\in[0,T)^2\x \Oc^2$  for $\delta$ small enough. In view of the celebrated Ishii lemma (see, \eg,~\cite[Thm.8.3]{crandall92}), there exist $X,Y\in\R^{d\x d}$ such that 
  \begin{equation}\label{eq:ishii}
   \begin{pmatrix}
    X & 0 \\
    0 & -Y
   \end{pmatrix}
   \leq 
   \frac{3}{\delta}
   \begin{pmatrix}
    I_d & -I_d \\
    -I_d & I_d
   \end{pmatrix}
   ,
  \end{equation}
  and 
  \begin{gather*}
   \frac{1}{\delta}\left(t_{\delta}-s_{\delta}\right) +  \Ht\left(t_{\delta},x_{\delta},\ut_1(t_{\delta},x_{\delta}),\frac{1}{\delta}\left(x_{\delta}-y_{\delta}\right), X\right) \geq 0, \\
   \frac{1}{\delta}\left(t_{\delta}-s_{\delta}\right) +  \Ht\left(s_{\delta},y_{\delta},\ut^{\varepsilon}_2(s_{\delta},y_{\delta}),\frac{1}{\delta}\left(x_{\delta}-y_{\delta}\right), Y\right) \leq 0.
  \end{gather*}
  From~\eqref{eq:comp} and the two inequalities above, it follows that
  \begin{align*}
   M &\leq M_ {\delta} \leq \ut_1(t_{\delta},x_{\delta}) - \ut^{\varepsilon}_2\left(s_{\delta},y_{\delta}\right) \\
     & \leq \Ht\left(t_{\delta},x_{\delta},\ut^{\varepsilon}_2\left(s_{\delta},y_{\delta}\right),\frac{1}{\delta}\left(x_{\delta}-y_{\delta}\right), X\right) - \Ht\left(t_{\delta},x_{\delta},\ut_1(t_{\delta},x_{\delta}) ,\frac{1}{\delta}\left(x_{\delta}-y_{\delta}\right), X\right) \\
     & \leq \Ht\left(t_{\delta},x_{\delta},\ut^{\varepsilon}_2\left(s_{\delta},y_{\delta}\right),\frac{1}{\delta}\left(x_{\delta}-y_{\delta}\right), X\right) - \Ht\left(s_{\delta},y_{\delta},\ut^{\varepsilon}_2(s_{\delta},y_{\delta}),\frac{1}{\delta}\left(x_{\delta}-y_{\delta}\right), Y\right).
  \end{align*}
  In view of Lemma~\ref{lem:moduluscont} below, it yields
  \begin{equation*}
   0 < M \leq \rho\left(\left|t_{\delta}-s_{\delta}\right| + \left|x_{\delta}-y_{\delta}\right| + \frac{1}{\delta}\left|x_{\delta}-y_{\delta}\right|^2\right)\Big(1 + \ut^{\varepsilon}_2\left(s_{\delta},y_{\delta}\right)\Big),
  \end{equation*}
  where $\rho:\R_+\to\R_+$ is a modulus of continuity.
  Since $\ut^{\varepsilon}_2$ is bounded on $[0,T]\x\Oc$,  the term on the r.h.s tends to zero as $\delta$ goes to infinity, which leads to a contradiction. It follows that $\ut_1\leq \ut_2^{\varepsilon}$. By sending $\varepsilon$ to zero, we deduce that $\ut_1\leq \ut_2$ and so $u_1\leq u_2$.
 \end{proof}
 
  \begin{lemma}\label{lem:moduluscont}
  With the notation of the proof above, there exists a modulus of continuity $\rho:\R_+\to\R_+$ such that
  \begin{multline*}
   \Ht\left(t,x,r,\frac{1}{\delta}(x-y),X\right) - \Ht\left(s,y,r,\frac{1}{\delta}(x-y),Y\right)\\
    \leq \rho\left(\left|t-s\right| + \left|x-y\right| + \frac{1}{\delta}\left|x-y\right|^2\right)\left(1+r\right),
  \end{multline*}
  for all $t,s\in[0,T]$, $x,y\in\R^d$, $r\in\R$, $X,Y\in\R^{d\x d}$ satisfying~\eqref{eq:ishii}. 
 \end{lemma}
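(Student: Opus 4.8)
The plan is to unwind the definition of $\Ht$ in terms of the Hamiltonian $H$ of~\eqref{eq:hamiltonian}, reduce the difference of the two Hamiltonians to a single supremum over the control $a\in A$, and then control the resulting first-order, second-order and zeroth-order terms separately, the second-order term being handled through the Ishii inequality~\eqref{eq:ishii}. The whole difficulty is concentrated in the diffusion term and in producing a genuine modulus of continuity for the branching nonlinearity $G^a$.

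First I would substitute $\Ht(t,x,r,p,M) = -\lambda r + \e{\lambda t}H(x,\e{-\lambda t}r,\e{-\lambda t}p,\e{-\lambda t}M)$ and expand $H$. Since $H$ is an infimum over $a\in A$ of expressions that are \emph{affine} in $p$ and $M$, the scalings $\e{\pm\lambda t}$ cancel on the drift and diffusion contributions, so that only the zeroth-order terms $\e{\lambda t}G^a(x,\e{-\lambda t}r)$ and $c(x,a)r$ remain genuinely dependent on $t$. As the two Hamiltonians share the same $r$ and the same momentum $p=\tfrac1\delta(x-y)$, the explicit $-\lambda r$ terms cancel; applying the elementary bound $\inf_a F(a)-\inf_a\tilde F(a)\leq\sup_a\{F(a)-\tilde F(a)\}$ then reduces the left-hand side to
\[
\sup_{a\in A}\bigg\{ \big(b(x,a)-b(y,a)\big)\cdot\tfrac1\delta(x-y) + \tfrac12\mathrm{tr}\big(\sigma\sigma^*(x,a)X - \sigma\sigma^*(y,a)Y\big) + \e{\lambda t}G^a\big(x,\e{-\lambda t}r\big) - \e{\lambda s}G^a\big(y,\e{-\lambda s}r\big) - \big(c(x,a)-c(y,a)\big)r \bigg\}.
\]

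For the drift term, the Lipschitz bound of Assumption~\ref{hyp:pop}(i) gives $|b(x,a)-b(y,a)|\,\tfrac1\delta|x-y|\leq\tfrac{K}{\delta}|x-y|^2$, which is absorbed into the argument of the sought modulus $\rho$. The diffusion term is the crux: pairing the matrix inequality~\eqref{eq:ishii} with the $2d\x m$ matrix obtained by stacking $\sigma(x,a)$ above $\sigma(y,a)$ yields the classical estimate
\[
\mathrm{tr}\big(\sigma\sigma^*(x,a)X\big) - \mathrm{tr}\big(\sigma\sigma^*(y,a)Y\big) \leq \tfrac{3}{\delta}\,\|\sigma(x,a)-\sigma(y,a)\|^2 \leq \tfrac{3K^2}{\delta}\,|x-y|^2,
\]
again by the Lipschitz property of $\sigma$. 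For the running-cost term, the uniform continuity of $c(\cdot,a)$, uniform in $a$ (Assumption~\ref{hyp:main}), gives $|c(x,a)-c(y,a)|\leq\omega(|x-y|)$ for a modulus $\omega$, hence a contribution of order $\omega(|x-y|)(1+r)$, the zeroth-order terms being the only source of $r$-dependence.

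The remaining branching term I would split into a spatial part $\e{\lambda t}\big[G^a(x,\e{-\lambda t}r)-G^a(y,\e{-\lambda t}r)\big]$ and a temporal part $\e{\lambda t}G^a(y,\e{-\lambda t}r)-\e{\lambda s}G^a(y,\e{-\lambda s}r)$. The temporal part is controlled by $|t-s|$ because $G^a(y,\cdot)$ is Lipschitz on $[-1,1]$ with constant $\gammab(1+M)$ (using $\sum_k kp_k\leq M$) and constant outside, so $t\mapsto\e{\lambda t}G^a(y,\e{-\lambda t}r)$ is Lipschitz on $[0,T]$ with a constant growing at most linearly in $r$. The spatial part is where I expect the main subtlety: one must produce a modulus of continuity for $x\mapsto G^a(x,\rho)$ that is uniform in $a$ and in $\rho\in[-1,1]$, which requires controlling the infinite series $\sum_k p_k(x,a)\rho^k$. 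This is achieved exactly as in Lemma~\ref{lem:inter}, combining the tail bound $\sum_{k\geq K}p_k\leq M/K$ with the uniform continuity of $\gamma$ and of finitely many $p_k$ from Assumption~\ref{hyp:main}. Collecting the four estimates, each term is dominated by a modulus of continuity evaluated at $|t-s|+|x-y|+\tfrac1\delta|x-y|^2$ times $(1+r)$, which is the desired conclusion; the principal obstacle is thus twofold, namely the Ishii-inequality manipulation for the second-order term and the uniform modulus for the branching nonlinearity, both resolved once the moment bound $\sum_k kp_k\leq M$ is invoked.
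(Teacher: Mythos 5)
Your proof is correct and follows essentially the same route as the paper: the reduction of the difference of infima to a supremum over $a\in A$, the stacked-matrix use of~\eqref{eq:ishii} to get $\mathrm{tr}\left(\sigma\sigma^*(x,a)X-\sigma\sigma^*(y,a)Y\right)\leq\frac{3}{\delta}\left|\sigma(x,a)-\sigma(y,a)\right|^2\leq\frac{3K^2}{\delta}\left|x-y\right|^2$, and the control of the branching term through the Lipschitz bound~\eqref{eq:G} and the uniform continuity of $\gamma$, $c$ and the $p_k$'s, with your truncation of $\sum_k\left|p_k(x,a)-p_k(y,a)\right|$ via the tail bound $\sum_{k\geq K}p_k\leq M/K$ (as in Lemma~\ref{lem:inter}) merely making explicit what the paper attributes directly to Assumption~\ref{hyp:main}. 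The one slip --- the remark that $c(x,a)r$ ``remains genuinely dependent on $t$,'' whereas its $\e{\pm\lambda t}$ scalings cancel because that term is linear in $r$ --- is harmless, since the supremum you display afterwards is the correct one.
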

 
 \begin{proof}
  Recall the well-known calculation, for every $U,V\in\R^{d\x m}$,
  \begin{align*}
   \mathrm{tr}\left(UU^*X-VV^*Y\right) 
    & = \mathrm{tr}\left( 
      \begin{pmatrix}
	UU^* & UV^* \\
	VU^* & VV^*
      \end{pmatrix}
      \begin{pmatrix}
	X & 0 \\
	0 & -Y
      \end{pmatrix}    
      \right) \\
    & \leq  \frac{3}{\delta} \mathrm{tr}\left( 
      \begin{pmatrix}
	UU^* & UV^* \\
	VU^* & VV^*
      \end{pmatrix}
      \begin{pmatrix}
	I_d & I_d \\
	I_d & -I_d
      \end{pmatrix}    
      \right)\\
    &\leq \frac{3}{\delta} \mathrm{tr}\left(\left(U-V\right)\left(U-V\right)^*\right).
  \end{align*}
  It follows that there exists $C>0$ such that for all $t,s\in[0,T]$, $x,y\in\R^d$, $X,Y\in\R^{d\x d}$ satisfying \eqref{eq:ishii},
  \begin{multline*}
   \Ht\left(t,x,r,\frac{1}{\delta}(x-y),X\right) - \Ht\left(s,y,r,\frac{1}{\delta}(x-y),Y\right) \leq \frac{C}{\delta} \left|x-y\right|^2 \\
       + \sup_{a\in A}{\left\{\left(c\left(y,a\right) - c\left(x,a\right)\right) r + \e{\lambda t} G^a\left(x,r\e{-\lambda t}\right) - \e{\lambda s} G^a\left(y,r\e{-\lambda s}\right)\right\}}.
  \end{multline*}
  Further, by using~\eqref{eq:G} and $\|G\|\leq 2\gammab$,  a straightforward calculation yields 
  \begin{multline*}
   \left|\e{\lambda t} G^a\left(x,r\e{-\lambda t}\right) - \e{\lambda s} G^a\left(y,r\e{-\lambda s}\right)\right| \leq 2 \gammab\left|\e{\lambda t} -\e{\lambda s}\right| + 2 \e{\lambda T} \left|\gamma(x,a)-\gamma(y,a)\right|  \\
   +  \gammab \e{\lambda T} \sum_{k\geq 0} {\left|p_k(x,a)-p_k(y,a)\right|} +  \gammab\left( M + 1\right) \e{2\lambda T}\left|\e{-\lambda t} - \e{-\lambda s}\right|.
  \end{multline*}
  The conclusion follows immediately by Assumption~\ref{hyp:main}.
 \end{proof}

 \subsection{Approximation procedure}\label{sec:hjbexist}
	
 In this section, we show that the value function satisfies the branching property~\eqref{eq:branching} and the HJB equation~\eqref{eq:hjb} in the viscosity sense. The idea of the proof is to approximate the value function $v$ by a sequence of smooth value function $(v_n)_{n\in\N}$ corresponding to small perturbations of the original problem. 
 
 Let $(\rho_n)_{n\in\N}$ be a family of mollifiers, \eg, $\rho_n(x)=n^d\rho(nx)$ where
 \begin{equation}
 	\rho(x) = \exp{\left(-\frac{1}{1-|x|^2}\right)} \mathds{1}_{|x| < 1}.
 \end{equation}
 We construct smooth approximations of the parameters as follows: $b_n(\cdot,a)= b(\cdot,a)*\rho_n$, $\sigma_n(\cdot,a)= \sigma(\cdot,a)*\rho_n$, $\gamma_n(\cdot,a)= \gamma(\cdot,a)*\rho_n$, $g_n= g*\rho_n$, $c_n(\cdot,a)= c(\cdot,a)*\rho_n + \frac{1}{n}$, $p_{n,k}(\cdot,a)= p_k(\cdot,a)*\rho_n$ for all $k<n$ and 
 \begin{equation*}
  p_{n,n} = 1-\sum_{k=0}^{n-1} {p_{n,k}}.
 \end{equation*}
 Clearly, these parameters satisfy Assumption~\ref{hyp:smooth} (i)--(iv). The uniform ellipticity condition (v) is more delicate to obtain. To this end, we start by enlarging the probability space.
 
   Let $(\Omt,(\Fct_s)_{s\geq 0},\Pt)$ be a filtered probability space 
embedded with $(\Bt^i)_{i\in\Ic}$ a family of independent $d$-dimensional Brownian motions. Define the enlarged probability space $(\Om\x\Omt,(\Fc_s\ox\Fct_s)_{s\geq0},\P\ox\Pt)$ and, by abuse of notations, for all $(\om,\omt)\in\Om\x\Omt$, $B^i(\om,\omt)=B^i(\om)$, $Q^i(\om,\omt)=Q^i(\om)$ and $\Bt^i(\om,\omt)=\Bt^i(\omt)$. Clearly, $(B^i,\Bt^i,Q^i)_{i\in\Ic}$ is a family of independent Brownian motions and Poisson random measures in the enlarged probability space. Denote by $\tilde{\Ac}$ be the collection of $\alpha=(\alpha^i)_{i\in\Ic}$ where $\alpha^i:\R_+\x\Om\x\Omt\to A$ is a predictable process  w.r.t. $(\Fc_s\ox\Fct_s)_{s\geq 0}$ .
 
 Fix $t\in[0,T]$ and $\mu=\sum_{i\in V} {\delta_{(i,x^i)}}\in\Ed$. For the sake of clarity, we omit the indices $(t,\mu)$ in the notations. Given $\alpha\in\tilde{\Ac}$, we define 
 \begin{equation*}
	\Z{n,\alpha}_s=\sum_{i\in V^{n,\alpha}_s} {\delta_{(i,\X{n,i}_s)}},\quad s\geq t,
 \end{equation*} 
 as the population process on the enlarged probability space corresponding to the branching parameters $\gamma_n$ and $(p_{n,k})_{0\leq k\leq n}$ and the diffusion
 \begin{equation*}
 	d\X{n,i}_s = b_n(\X{n,i}_s,\alpha^i_s) ds + \sigma_n(\X{n,i}_s,\alpha^i_s) dB^i_s + \frac{1}{\sqrt{n}} d\Bt^i_s.
 \end{equation*}
  In addition, we define the cost function $\bar{J}_n:[0,T]\x\Ed\x\tilde{\Ac}\to[0,1]$ by
  \begin{equation*}
    \bar{J}_n(t,\mu,\alpha) := \Et\left[\varGamma^{n,\alpha}_T \prod_{i\in V^{n,\alpha}_T} {g_n\left(\X{n,i}_T\right)}\right],
  \end{equation*}
  where $\Et$ denotes the expectation w.r.t. $\P\ox\Pt$ and 
  \begin{equation*}
  	\varGamma^{n,\alpha}_T := \exp{\left(-\int_t^T{\sum_{i\in V^{n,\alpha}_s} c_n\left(\X{n,i}_s,\alpha^i_s\right) \,ds }\right)}.
  \end{equation*}
  Similarly, we define both the value functions $\bar{v}_n:[0,T]\x\Ed\to[0,1]$ and $v_n:[0,T]\x\R^d\to[0,1]$ by
  \begin{equation*}
    \bar{v}_n(t,\mu):=\inf_{\alpha\in\tilde{\Ac}} {\bar{J}_n(t,\mu,\alpha)} \quad \text{and} \quad v_n(t,x):=\bar{v}_n(t,\delta_{(\emptyset,x)}).
  \end{equation*}

  In view of Corollary~\ref{cor:hjb-smooth}, the value function $v_n$ satisfies in the classical sense
  \begin{equation*}
    \partial_t v_n\left(t,x\right) + H_n\left(x,v_n(t,x),D_x{v_n}(t,x), D^2_x{v_n}(t,x)\right) = 0,\quad \forall\,\left(t, x\right)\in[0,T)\x\R^d,
  \end{equation*}
  where $H_n:\R^d\x[0,1]\x\R^d\x\R^{d\x d}\to\R$  is given by
  \begin{multline*}
   H_n(x,r,p,M) := \inf_{a\in A} \Bigg\{b_n(x,a)\cdot p + \frac{1}{2}\mathrm{tr}\left(\left(\sigma_n\sigma_n^*(x,a)+\frac{1}{n} I_d\right) M\right) \\
    + \gamma_n(x,a) \left(\sum_{k=0}^n {p_{n,k}(x,a)r^k} - r\right) - c_n(x,a) r\Bigg\},
  \end{multline*}
  By Lemma~\ref{lem:last} below, $v_n$ converges uniformly to $v$. In addition, since $\|b_n-b\|$, $\|\sigma_n-\sigma\|$, $\|\gamma_{n}-\gamma\|$, $\|p_{n,k}-p_k\|$, $\|c_n-c\|$ and $\|g_n-g\|$ vanishes as $n$ goes to $\infty$, one easily checks that $H_n$ converges locally uniformly to $H$ given by \eqref{eq:hamiltonian}. Hence, it follows from the stability of viscosity solutions (see,\eg, Lemma II.6.2 in~\cite{fleming06}) that $v$ is a viscosity solution of the HJB equation~\eqref{eq:hjb}.
  
 Similarly, in view of Corollary~\ref{cor:hjb-smooth}, it holds
 \begin{equation*}
 	\bar{v}_n(t,\mu) = \prod_{i\in V} {v_n(t,x^i)}.
 \end{equation*}
 Taking the limit $n\to\infty$, it follows from Lemma~\ref{lem:last} that the branching property~\eqref{eq:branching} is satisfied.
  
  \begin{lemma}\label{lem:last}
  	With the notations above, it holds
  	\begin{equation*}
  		\lim_{n\to\infty} {\sup_{i\in V} \sup_{x^i\in\R^d} \sup_{t\in[0,T]} \left|\bar{v}_n(t,\mu) - \bar{v}(t,\mu)\right|} = 0.
	\end{equation*}
  \end{lemma}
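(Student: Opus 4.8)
The plan is to estimate $|\bar v_n(t,\mu) - \bar v(t,\mu)|$ by a coupling argument resting on Proposition~\ref{prop:perturbation}, which calls for three ingredients: recasting the perturbed dynamics so that the proposition applies despite the auxiliary noise $\tilde B^i$, estimating the cost gap on the coupling event, and identifying the value on the enlarged space with $\bar v$. For the first ingredient, on $\Om\x\Omt$ I would view $(B^i,\tilde B^i)$ as a single $(m+d)$-dimensional Brownian motion and the perturbed diffusion coefficient as $\hat\sigma_n := [\sigma_n,\tfrac1{\sqrt n}I_d]\in\R^{d\x(m+d)}$, so that $\hat\sigma_n\hat\sigma_n^* = \sigma_n\sigma_n^* + \tfrac1n I_d$. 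The original dynamics lifted to $\Om\x\Omt$ is driven by the same noise with coefficient $\hat\sigma := [\sigma,0]$; it ignores $\tilde B^i$, so its positions are the usual $X^i$ and its label sets the usual $V_s$, and I denote by $\bar J^{enl}$, $\bar v^{enl}$ the associated cost and value over $\tilde\Ac$. Since $\|\hat\sigma_n-\hat\sigma\|\le\|\sigma_n-\sigma\|+n^{-1/2}$ and every other parameter gap vanishes, Proposition~\ref{prop:perturbation} gives, for fixed $\mu$ with $|V|\le n_0$, a modulus $\rho$ with
$$\P\ox\Pt\big(\Om\x\Omt\setminus E^\delta_n\big)\le\rho(\Delta_n),\qquad \Delta_n\to0,$$
where $E^\delta_n := \{\forall s\in[t,T],\ V_s = V^{n,\alpha}_s,\ \sup_{i}|X^i_s-\X{n,i}_s|\le\delta\}$, the bound being uniform in $t$, in $\mu$ with $|V|\le n_0$, and in $\alpha\in\tilde\Ac$.

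Next I would estimate the cost gap on $E^\delta_n$. There the label sets coincide for all times and $|X^i_s-\X{n,i}_s|\le\delta$, so using $|\prod a_i-\prod b_i|\le\sum_i|a_i-b_i|$ for $a_i,b_i\in[0,1]$, the bound $|e^{-A}-e^{-B}|\le|A-B|$ for $A,B\ge0$, the uniform continuity of $g$ and $c$ from Assumption~\ref{hyp:main}, and triangle inequalities such as $|g_n(\X{n,i}_T)-g(X^i_T)|\le\|g_n-g\|+\omega_g(\delta)$ (and likewise for $c_n=c*\rho_n+\tfrac1n$), the integrand difference $\varGamma^{n,\alpha}_T\prod g_n - \varGamma^{t,\mu,\alpha}_T\prod g$ is bounded by $C\,N^*_T\,\eta_n(\delta)$, with $N^*_T:=\sup_{s\le T}N_s$ and $\eta_n(\delta):=\|g_n-g\|+\|c_n-c\|+\tfrac1n+\omega_g(\delta)+\omega_c(\delta)$. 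Taking expectations, using the moment bound~\eqref{eq:moment} to control $\Et[N^*_T]\le|V|e^{\gammab M T}$, and bounding the integrand by $1$ off $E^\delta_n$, yields
$$\sup_{\alpha\in\tilde\Ac}\big|\bar J_n(t,\mu,\alpha)-\bar J^{enl}(t,\mu,\alpha)\big|\le C\,|V|\,e^{\gammab M T}\eta_n(\delta)+\rho(\Delta_n).$$
Letting $n\to\infty$ (so $\Delta_n,\|g_n-g\|,\|c_n-c\|,\tfrac1n\to0$) and then $\delta\to0$ shows this supremum, hence $|\bar v_n-\bar v^{enl}|$, tends to $0$ uniformly in $t$ and in $\mu$ with $|V|\le n_0$.

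It remains to identify $\bar v^{enl}$ with $\bar v$, i.e. to show that enlarging the space and the control set does not change the value. The inequality $\bar v^{enl}\le\bar v$ is immediate: any $\alpha\in\Ac$ lifts to $\tilde\Ac$ without using $\tilde\omega$, and since the lifted process ignores $\tilde B$ and $\Pt$ is independent, $\bar J^{enl}(t,\mu,\alpha)=\bar J(t,\mu,\alpha)$. For the converse I would fix $\alpha\in\tilde\Ac$ and condition on $\tilde\omega$: a monotone-class argument shows that for $\Pt$-a.e.\ $\tilde\omega_0$ the section $\alpha(\cdot,\cdot,\tilde\omega_0)$ is $(\Fc_s)$-predictable, hence lies in $\Ac$, and since the pathwise construction of Section~\ref{sec:strongbd} uses only $(B^i,Q^i)$ and the control values, the lifted process conditioned on $\{\tilde\omega=\tilde\omega_0\}$ coincides with the original process driven by $\alpha(\cdot,\tilde\omega_0)$. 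Fubini's theorem then gives
$$\bar J^{enl}(t,\mu,\alpha)=\int_{\Omt}\bar J\big(t,\mu,\alpha(\cdot,\tilde\omega_0)\big)\,\Pt(d\tilde\omega_0)\ge\inf_{\alpha'\in\Ac}\bar J(t,\mu,\alpha')=\bar v(t,\mu),$$
so $\bar v^{enl}\ge\bar v$. Combining the three displays gives the claim. I expect the main obstacle to be precisely this last identification: applying Proposition~\ref{prop:perturbation} forces one to absorb the extra noise into an augmented driving Brownian motion, and the equality $\bar v^{enl}=\bar v$ hinges on the measurability of the sections $\alpha(\cdot,\tilde\omega_0)$ together with the correct disintegration of the lifted process under conditioning — intuitively clear, but needing the monotone-class and Fubini arguments to be carried out carefully.
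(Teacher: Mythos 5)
Your proposal is correct and follows essentially the same route as the paper: the same coupling event via Proposition~\ref{prop:perturbation}, the same elementary estimates ($|e^{-A}-e^{-B}|\le|A-B|$, $|\prod a_i-\prod b_i|\le\sum_i|a_i-b_i|$, the moment bound~\eqref{eq:moment} and the moduli of continuity of $c$ and $g$), and the same final identification of the enlarged-space value with $\bar v$ by taking sections $\alpha(\cdot,\cdot,\tilde\omega_0)$ and applying Fubini, exactly as in the paper's $\bar v_\infty=\bar v$ step. The only differences are presentational: you make explicit two points the paper leaves implicit, namely the augmentation $\hat\sigma_n=[\sigma_n,\tfrac{1}{\sqrt n}I_d]$ driven by the $(m+d)$-dimensional Brownian motion $(B^i,\tilde B^i)$ that puts the perturbed dynamics within the scope of Proposition~\ref{prop:perturbation}, and the monotone-class argument for the predictability of the sections.
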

  
  \begin{proof}
    By abuse of notation, given $\alpha\in\tilde{\Ac}$, we denote by $\Z{\alpha}=\sum_{i\in V^{\alpha}} {\delta_{(i,\X{i})}}$ the solution of~\eqref{eq:eds} in the enlarged probability space. Let $\bar{J}_{\infty}:[0,T]\x\Ed\x\tilde{\Ac}\to[0,1]$ be given by
    \begin{equation*}
      \bar{J}_{\infty}(t,\mu,\alpha) := \Et\left[\varGamma^{\alpha}_T \prod_{i\in V^{\alpha}_T} {g\left(\X{i}_T\right)}\right].
    \end{equation*}
  	First, we observe that for all $\alpha\in\tilde{\Ac}$,
  	\begin{multline*}
  		\left|\bar{J}_{\infty}(t,\mu,\alpha) - \bar{J}_n(t,\mu,\alpha)\right| \leq \P\ox\Pt\left(\Om\x\tilde{\Om}\setminus F^n_\delta\right) \\
  		+ \Et\left[\left|\varGamma_T^{\alpha} - \varGamma_T^{n,\alpha}\right|\mathds{1}_{F^n_\delta}\right] + \Et\left[\left|\prod_{i\in V^{\alpha}_T} {g\left(\X{i}_T\right)} - \prod_{i\in V^{n,\alpha}_T} {g_n\left(\X{n,i}_T\right)}\right|\mathds{1}_{F^n_\delta}\right],
  	\end{multline*}
  	where 
  	\begin{equation*}
  		F^n_\delta := \left\{V_s^{\alpha} = V_s^{n,\alpha},\ \left|\X{i}_s-\X{n,i}_s\right|\leq \delta,\  i\in V^{\alpha}_s,\ s\in\left[t,T\right]\right\}.
  	\end{equation*}
  	Using $1-e^{-x}\leq x$, the second term on the r.h.s. can be bounded as follows:
  	\begin{equation*}
  		\Et\left[\left|\varGamma_T^{\alpha} - \varGamma_T^{n,\alpha}\right|\mathds{1}_{F^n_\delta}\right] \\
  		\leq \Et\left[\int_t^T{\sum_{i\in V^{\alpha}_s} \left|c\left(\X{i}_s,\alpha^i_s\right) - c_n\left(\X{n,i}_s,\alpha^i_s\right)\right| \,ds }\,\mathds{1}_{F^n_\delta}\right]
  	\end{equation*}
  	Regarding the third term, we start by observing that for all $k\in\N$, $(x_1,\ldots,x_k)$ and $(y_1,\ldots,y_k)$ in $[0,1]^k$,
  	\begin{equation*}
   		\left|\prod_{l=1}^k {x_l} - \prod_{l=1}^k {y_l}\right| \leq \sum_{l=1}^k {\left|x_l-y_l\right|}.
  	\end{equation*}
  	Then it follows that
  	\begin{equation*}
  		\Et\left[\left|\prod_{i\in V^{\alpha}_T} {g\left(\X{i}_T\right)} - \prod_{i\in V^{n,\alpha}_T} {g_n\left(\X{n,i}_T\right)}\right|\mathds{1}_{F^n_\delta}\right] \leq \Et\left[\sum_{i\in V^{\alpha}_T} \left|g\left(\X{i}_s\right) - g_n\left(\X{n,i}_s\right)\right| \mathds{1}_{F^n_\delta}\right].
  	\end{equation*}
  	Given $\varepsilon>0$, we take $\delta>0$ such that $|c(y,a)-c(y',a)|\leq\varepsilon$ and $|g(y)-g(y')|\leq\varepsilon$ for all $|y-y'|\leq\delta$ and $a\in A$. We deduce that
  	\begin{multline*}
  		\left|\bar{J}_{\infty}(t,\mu,\alpha) - \bar{J}_n(t,\mu,\alpha)\right| \leq \P\ox\Pt\left(\Om\x\tilde{\Om}\setminus F^n_\delta\right) \\
  		+ C T \left(\|c_n - c\| + \varepsilon\right) + C \left(\|g_n - g\| + \varepsilon\right),
  	\end{multline*}
  	where $C = |V| e^{\gammab M T}$. Further, Proposition~\ref{prop:perturbation} ensures that $\P\ox\Pt\left(\Om\x\tilde{\Om}\setminus F^n_\delta\right)$ vanishes, uniformly w.r.t. $t$, $(x^i)_{i\in V}$ and $\alpha$, as $n$ tends to infinity. We deduce that 
  	\begin{equation*}
  		\lim_{n\to\infty} {\sup_{i\in V} \sup_{x^i\in\R^d} \sup_{t\in[0,T]} \sup_{\alpha\in\tilde{\Ac}} {\left|\bar{J}_{\infty}(t,\mu,\alpha) - \bar{J}_n(t,\mu,\alpha)\right|}} = 0.
  	\end{equation*}
  	It follows that 
  	\begin{equation*}
  		\lim_{n\to\infty} {\sup_{i\in V} \sup_{x^i\in\R^d} \sup_{t\in[0,T]} {\left|\bar{v}_{\infty}(t,\mu) - \bar{v}_n(t,\mu)\right|}} = 0,
  	\end{equation*}
  	where 
  	\begin{equation*}
  		\bar{v}_{\infty}(t,\mu) = \inf_{\alpha\in\tilde{\Ac}} {\bar{J}_{\infty}(t,\mu,\alpha)}.
  	\end{equation*}
  	To conclude, it remains to show that $\bar{v}$ coincide with $\bar{v}_{\infty}$. Given $\alpha\in\tilde{\Ac}$ and $\omt\in\Omt$, we define $\alpha^{\omt}:\R_+\x\Om\to A$ by $\alpha^{\omt}_s(\om):=\alpha(s,\om,\omt)$. It is clear that, $\omt\in\Omt$ being fixed, $\alpha^{\omt}\in\Ac$. 	Since $d\Bt^{i}$ appears with coefficient $0$ in~\eqref{eq:edsi} for all $i\in\Ic$, it follows that 
  	\begin{equation*}
  		\bar{J}_{\infty}(t,\mu,\alpha) = \int_{\Omt}{\bar{J}(t,\mu,\alpha^{\omt}) \,\Pt(d\omt)} \geq \bar{v}(t,\mu).
  	\end{equation*}
  	We deduce that $\bar{v}_{\infty}\geq \bar{v}$. The other inequality is obvious by natural injection of $\Ac$ into $\tilde{\Ac}$.
  \end{proof}
  
  \section{Dynamic programming principle}\label{sec:dpp}
  
  The aim of this Section is to derive the DPP satisfied by the value function. Recall that, under Assumption~\ref{hyp:smooth}, the DPP~\eqref{eq:PPD_hard}--\eqref{eq:PPD_easy} holds by Corollary~\ref{cor:hjb-smooth}. For the general case, we use the same approximation procedure 
  described in Section~\ref{sec:hjbexist}. Note that the formulation of the DPP differs from Proposition~\ref{prop:dpp-smooth} as we need to work in the enlarged probability space.
  
  \begin{theorem}
  	With the notation of Section~\ref{sec:hjbexist}, denote by $\tilde{\Tc}_{t,T}$ the collection of all stopping times w.r.t. $(\Fc_s\ox\tilde{\Fc}_s)_{s\geq 0}$ valued in $[t,T]$. For all $t\in[0,T]$ and $\mu=\sum_{i\in V}{\delta_{(i,x^i)}}\in\Ed$, it holds:\\
  	\rmi for all $\alpha\in\tilde{\Ac}$ and $\tau\in \tilde{\Tc}_{t,T}$,
  	\begin{equation*}
   		\prod_{i\in V} {v\left(t,x^i\right)} \leq \Et\left[\varGamma^{t,\mu,\alpha}_\tau \prod_{i\in V^{t,\mu,\alpha}_{\tau}} {v\left(\tau,X^{i}_{\tau}\right)}\right];
  	\end{equation*}
  	\rmii for all $\varepsilon>0$, there exists $\alpha\in\tilde{\Ac}$ such that, for all $\tau\in \tilde{\Tc}_{t,T}$,
  	\begin{equation*}
   		\prod_{i\in V} {v\left(t,x^i\right)} + \varepsilon \geq \Et\left[\varGamma^{t,\mu,\alpha}_\tau \prod_{i\in V^{t,\mu,\alpha}_{\tau}} {v\left(\tau,X^{i}_{\tau}\right)}\right].
  	\end{equation*}
  \end{theorem}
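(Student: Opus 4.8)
The plan is to run, once more, the approximation procedure of Section~\ref{sec:hjbexist} and to transfer the \emph{smooth} dynamic programming principle of Corollary~\ref{cor:hjb-smooth} from the perturbed value functions $v_n$ to $v$ by letting $n\to\infty$. Since the perturbed parameters satisfy Assumption~\ref{hyp:smooth}, Corollary~\ref{cor:hjb-smooth} applied in the enlarged probability space gives, for every $n$, both halves of the DPP for $v_n$: for all $\alpha\in\tilde{\Ac}$ and $\tau\in\tilde{\Tc}_{t,T}$,
\begin{equation*}
 \prod_{i\in V} v_n(t,x^i) \leq \Et\left[\varGamma^{n,\alpha}_\tau \prod_{i\in V^{n,\alpha}_\tau} v_n(\tau,\X{n,i}_\tau)\right],
\end{equation*}
together with, for each $\varepsilon$, a control $\alpha_n\in\tilde{\Ac}$ realizing the reverse inequality up to $\varepsilon$ simultaneously for all $\tau$ (Proposition~\ref{prop:dpp-smooth}). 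The whole task is then to replace $v_n$, the perturbed process $\Z{n,\alpha}$ and $\varGamma^{n,\alpha}$ by their unperturbed counterparts $v$, $\Z{\alpha}$ and $\varGamma^{\alpha}$, the solution of~\eqref{eq:eds} in the enlarged space as in the proof of Lemma~\ref{lem:last} (we keep omitting the indices $(t,\mu)$).

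To this end I would reuse the estimates of Lemma~\ref{lem:last} almost verbatim. Fixing $\delta>0$ and working on the event
\begin{equation*}
 F^n_\delta := \left\{V^{n,\alpha}_s = V^{\alpha}_s,\ \left|\X{n,i}_s-\X{i}_s\right|\leq\delta,\ i\in V^\alpha_s,\ s\in[t,T]\right\},
\end{equation*}
Proposition~\ref{prop:perturbation} bounds $\P\ox\Pt(\Om\x\Omt\setminus F^n_\delta)$ by a quantity vanishing as $n\to\infty$ for fixed $\delta$, uniformly in $\alpha$, and hence simultaneously for every $\tau\leq T$. On $F^n_\delta$ the index sets coincide, so using $|\prod x_l-\prod y_l|\leq\sum|x_l-y_l|$, the inequality $1-\e{-x}\leq x$, and Proposition~\ref{prop:def} to absorb the particle count, the gap between the two expectations is controlled by $\|v_n-v\|$, $\|c_n-c\|$, the modulus of continuity of $c$ at $\delta$ (Assumption~\ref{hyp:main}), and the increments $|v(\tau,\X{n,i}_\tau)-v(\tau,\X{i}_\tau)|$. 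By Lemma~\ref{lem:last} the first quantities tend to $0$.

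For part~(i) the control $\alpha$ is fixed, so the increment term can be dispatched without any extra regularity of $v$: from Proposition~\ref{prop:perturbation} one has $\sup_{s}\sup_{i\in V^\alpha_s}|\X{n,i}_s-\X{i}_s|\to0$ in probability (with genealogies eventually coinciding), hence along a subsequence this holds almost surely, and dominated convergence together with the continuity of $v$ (a uniform limit of the continuous $v_n$ by Lemma~\ref{lem:last}) yields $v_{n}(\tau,\X{n,i}_\tau)\to v(\tau,\X{i}_\tau)$ and $\varGamma^{n,\alpha}_\tau\to\varGamma^{\alpha}_\tau$ pointwise; since the limit is the same for every such subsequence, the full sequence of expectations converges. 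Passing to the limit $n\to\infty$ and then $\delta\to0$ gives~(i).

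The main obstacle is part~(ii), where the near-optimal control $\alpha=\alpha_n$ depends on $n$: the subsequence argument is then unavailable, and one must instead produce a single control and a \emph{quantitative} error bound, uniform in $\tau$, for a fixed large $n$. This forces one to control $|v(\tau,\X{n,i}_\tau)-v(\tau,\X{i}_\tau)|$ on $F^n_\delta$ by $\rho_v(\delta)$, where $\rho_v$ is a spatial modulus of continuity of $v$ that is uniform in time — the key additional ingredient. I would obtain $\rho_v$ directly for the original problem by a coupling argument in the spirit of Proposition~\ref{prop:perturbation}, comparing two populations with identical parameters started from nearby positions: the closeness of the driving trajectories (Lipschitz continuity of $b,\sigma$) and the uniform continuity of $\gamma$ and the $p_k$ (Assumption~\ref{hyp:main}) keep the genealogies coupled with high probability, so that $|v(t,x)-v(t,x')|$ is small uniformly in $t$ when $|x-x'|$ is small. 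With $\rho_v$ in hand one fixes $\delta$ small, then $n$ large enough that $\|v_n-v\|$, $\|c_n-c\|$ and $\P\ox\Pt(\Om\x\Omt\setminus F^n_\delta)$ are all small, and takes $\alpha=\alpha_n$; the resulting bound holds uniformly in $\tau$ and yields~(ii).
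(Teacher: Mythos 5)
Your proposal is correct and follows the same skeleton as the paper's proof: both halves of the DPP are first obtained for the perturbed smooth problems via Corollary~\ref{cor:hjb-smooth}, then transferred to $v$ by splitting on the coupling event $F^n_\delta$ and invoking Proposition~\ref{prop:perturbation} uniformly in $\alpha$ and $\tau$, with the quantifier order in part~(ii) --- first fix $n$, then take $\alpha=\alpha_n$ --- handled exactly as in the paper. Where you genuinely diverge is in the one step the paper leaves implicit. The paper asserts that $\Et\bigl[\varGamma^{n,\alpha}_\tau\prod_{i} v_n(\tau,\X{n,i}_\tau)\bigr]\to\Et\bigl[\varGamma^{\alpha}_\tau\prod_{i} v(\tau,\X{i}_\tau)\bigr]$ uniformly in $\tau$ ``by the same arguments as Lemma~\ref{lem:last}''; but in Lemma~\ref{lem:last} the terminal function is $g$, whose uniform continuity is part of Assumption~\ref{hyp:main}, whereas here it is $v$, so transcribing that argument requires a spatial modulus of continuity for $v$ uniform in time --- precisely the ingredient you isolate and supply via a coupling of two populations with identical parameters and nearby initial positions. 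That coupling does go through: Lemma~\ref{lem:inter} already contains the $|x-y|$ dependence needed to keep genealogies matched, the diffusive part is controlled by Gr\"onwall under the Lipschitz hypothesis, and $|v(t,x)-v(t,x')|\leq\sup_{\alpha}|\bar{J}(t,\delta_{(\emptyset,x)},\alpha)-\bar{J}(t,\delta_{(\emptyset,x')},\alpha)|$ converts the coupling estimate into the desired modulus, uniformly in $t$ and $\alpha$. An alternative that avoids the extra lemma: since $t$ and $\mu$ are fixed in the theorem (unlike Lemma~\ref{lem:last}, which is uniform over initial positions), one can localize --- uniformly in $\alpha$, all particles remain in a ball $B_R$ up to time $T$ with probability at least $1-\varepsilon$ (bounded $b,\sigma$ together with the moment bound~\eqref{eq:moment}), and $v$, being the uniform limit of the continuous $v_n$, is uniformly continuous on the compact $[0,T]\x\bar{B}_{R+1}$. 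Your softer subsequence/dominated-convergence treatment of part~(i) is also valid for fixed $(\alpha,\tau)$, though the paper's quantitative estimate delivers part~(i) and the uniformity needed in part~(ii) in a single stroke; in any case, your diagnosis that part~(ii) is where uniformity in $\tau$ and in $\alpha$ is indispensable is exactly right.
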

  
  \begin{remark}
   In view of the branching property~\eqref{eq:branching}, the formulation of the DPP above is equivalent to
  \begin{equation*}
    \bar{v}\left(t,\mu\right) 
      = \inf_{\alpha\in\tilde{\Ac}} { \inf_{\tau\in \tilde{\Tc}_{t,T}} {\Et\left[\bar{v}\left(\tau,\Z{t,\mu,\alpha}_\tau\right)\right]}}
      = \inf_{\alpha\in\tilde{\Ac}} { \sup_{\tau\in \tilde{\Tc}_{t,T}} {\Et\left[\bar{v}\left(\tau,\Z{t,\mu,\alpha}_\tau\right)\right]}}.
  \end{equation*}
  \end{remark}

  \begin{proof}
   Once again, we omit the indices $(t,\mu)$ in the notations. We start by proving~(i). By Corollary~\ref{cor:hjb-smooth}, we have for all $\alpha\in\tilde{\Ac}$ and $\tau\in \tilde{\Tc}_{t,T}$,
   \begin{equation}\label{eq:dpp-proof-hard}
    \prod_{i\in V} {v_n\left(t,x^i\right)} \leq \Et\left[\varGamma^{n,\alpha}_\tau \prod_{i\in V^{n,\alpha}_{\tau}} {v_n\left(\tau,X^{n,i}_{\tau}\right)}\right].
   \end{equation}
   Further, it holds
   \begin{multline*}
  	\Et\left[\left|\varGamma^{\alpha}_\tau \prod_{i\in V^{\alpha}_{\tau}} {v\left(\tau,X^{i}_{\tau}\right)} - \varGamma^{n,\alpha}_\tau \prod_{i\in V^{n,\alpha}_{\tau}} {v_n\left(\tau,X^{n,i}_{\tau}\right)}\right|\right] \leq \P\ox\Pt\left(\Om\x\tilde{\Om}\setminus F^n_\delta\right) \\
  	+ \Et\left[\left|\varGamma_\tau^{\alpha} - \varGamma_\tau^{n,\alpha}\right|\mathds{1}_{F^n_\delta}\right] + \Et\left[\left|\prod_{i\in V^{\alpha}_\tau} {v\left(\tau,\X{i}_\tau\right)} - \prod_{i\in V^{n,\alpha}_\tau} {v_n\left(\tau,\X{n,i}_\tau\right)}\right|\mathds{1}_{F^n_\delta}\right],
   \end{multline*}
   where 
   \begin{equation*}
  	F^n_\delta := \left\{V_s^{\alpha} = V_s^{n,\alpha},\ \left|\X{i}_s-\X{n,i}_s\right|\leq \delta,\  i\in V^{\alpha}_s,\ s\in\left[t,T\right]\right\}.
   \end{equation*}
   Using the same arguments as in the proof of Lemma~\ref{lem:last}, we derive that 
   \begin{equation*}
    \lim_{n\to+\infty} \Et\left[\varGamma^{n,\alpha}_\tau \prod_{i\in V^{n,\alpha}_{\tau}} {v_n\left(\tau,X^{n,i}_{\tau}\right)} \right] = \Et\left[\varGamma^{\alpha}_\tau \prod_{i\in V^{\alpha}_{\tau}} {v\left(\tau,X^{i}_{\tau}\right)}\right].
   \end{equation*}
   Notice that these arguments allow to prove that the convergence is uniform w.r.t. $\tau\in\tilde{\Tc}_{t,T}$. 
   To conclude, it remains to take the limit $n\to+\infty$ in~\eqref{eq:dpp-proof-hard}. Let us turn now to the proof of (ii). Fix $\varepsilon>0$. In view of the above, we choose $n\in\N$ sufficiently large to ensure that  for all $\tau\in\tilde{\Tc}_{t,T}$, 
   \begin{equation*}
    \Et\left[\left|\varGamma^{\alpha}_\tau \prod_{i\in V^{\alpha}_{\tau}} {v\left(\tau,X^{i}_{\tau}\right)} - \varGamma^{n,\alpha}_\tau \prod_{i\in V^{n,\alpha}_{\tau}} {v_n\left(\tau,X^{n,i}_{\tau}\right)}\right|\right] \leq \frac{\varepsilon}{3}.
   \end{equation*}
   Then, by Corollary~\ref{cor:hjb-smooth}, we take $\alpha\in\tilde{\Ac}$ such that for all $\tau\in\Tc_{t,T}$, 
   \begin{equation*}
   	\prod_{i\in V} {v_n\left(t,x^i\right)} + \frac{\varepsilon}{3} \geq \Et\left[\varGamma^{n,\alpha}_\tau \prod_{i\in V^{n,\alpha}_{\tau}} {v_n\left(\tau,X^{n,i}_{\tau}\right)}\right].
   \end{equation*}
   We conclude as follows:
   \begin{align*}
    \prod_{i\in V} {v\left(t,x^i\right)} \geq \prod_{i\in V} {v_n\left(t,x^i\right)} - \frac{\varepsilon}{3}
      & \geq \Et\left[\varGamma^{n,\alpha}_\tau \prod_{i\in V^{n,\alpha}_{\tau}} {v_n\left(\tau,X^{n,i}_{\tau}\right)}\right] - \frac{2\varepsilon}{3} \\
      & \geq \Et\left[\varGamma^{\alpha}_\tau \prod_{i\in V^{\alpha}_{\tau}} {v\left(\tau,X^{i}_{\tau}\right)}\right] - \varepsilon.
   \end{align*}
  \end{proof}

 \subparagraph{Acknowledgements.}
 I gratefully acknowledge my PhD supervisors Nicolas Champagnat and Denis Talay for supervising this work. 
 I am also grateful to the financial support of ERC Advanced Grant 321111 ROFIRM.

 \bibliographystyle{habbrv}
 \bibliography{These}{}
 
\end{document}